	\pgfplotsset{width=7cm,compat=1.3}
	\pgfplotsset{every linear axis/.append style={
		/pgf/number format/.cd,
		use comma,
		1000 sep={\,},
	}}
\renewcommand{\leq}{\ensuremath{\leqslant}} 
\renewcommand{\geq}{\ensuremath{\geqslant}}
\DeclareMathOperator{\GL}{GL}
\DeclareMathOperator{\Z}{\mathbb Z}
\DeclareMathOperator{\F}{\mathbb F}
\DeclareMathOperator{\Lie}{\mathcal L}
\DeclareMathOperator{\Der}{Der}
\DeclareMathOperator{\ad}{ad}
\DeclareMathOperator{\tr}{Tr}
\DeclareMathOperator{\ima}{Im}
\DeclareMathOperator{\coker}{coker}
\DeclareMathOperator{\Hom}{Hom}
\DeclareMathOperator{\Aut}{Aut}
\DeclareMathOperator{\End}{End}
\DeclareMathOperator{\Act}{Act}
\DeclareMathOperator{\Ext}{Ext}
\DeclareMathOperator{\gr}{gr}
\newtheorem*{rep@theorem}{\rep@title}
\newcommand{\newreptheorem}[2]{%
\newenvironment{rep#1}[1]{%
 \def\rep@title{#2 \ref{##1}}%
 \begin{rep@theorem}}%
 {\end{rep@theorem}}}
\theoremstyle{plain}
\newtheorem{theo}{Theorem}[section]
\newtheorem{lem}[theo]{Lemma}
\newtheorem{prop}[theo]{Proposition}
\newtheorem{cor}[theo]{Corollary}
\newcounter{PB}
\newtheorem{pb}[PB]{Problem}
\numberwithin{equation}{theo}
\theoremstyle{definition}
\newtheorem{defi}[theo]{Definition}
\newtheorem{nota}[theo]{Notation}
\newtheorem{ex}[theo]{Example}
\newtheorem{rmq}[theo]{Remark}
\newcommand{\cat}[1]{\mathcal #1}
\renewcommand{\k}{\Bbbk}
\newcommand{\kMod}[1][\k]{\cat Mod _{#1}}
\newcommand{\kfAlg}[1][\k]{f\cat Alg _{#1}}
\newcommand{\Enstq}[2]{\left\{\ #1\ \middle|\ #2\ \right\}}    
\newcommand{\Id}[1][]{\mathbbm 1_#1}                             
\newcommand{\cpx}[1]{#1_*}
\newcommand*{\longhookrightarrow}{\ensuremath{\lhook\joinrel\relbar\joinrel\rightarrow}}
\let\oldpagenumbering\pagenumbering
\renewcommand{\pagenumbering}[1]{%
	\cleardoublepage
	\oldpagenumbering{#1}
}
\author{Jacques \scshape{Darn\'e}}
\title{On the stable Andreadakis problem}
\date{\today}
\begin{document}

\maketitle

\begin{abstract}
Let $F_n$ be the free group on $n$ generators. Consider the group $IA_n$ of automorphisms of $F_n$ acting trivially on its abelianization. There are two canonical filtrations on $IA_n$: the first one is its lower central series $\Gamma_*$; the second one is the Andreadakis filtration $\mathcal A_*$, defined from the action on $F_n$. In this paper, we establish that the canonical morphism between the associated graded Lie rings $\Lie(\Gamma_*)$ and $\Lie(\mathcal A_*)$ is stably surjective. We then investigate a $p$-restricted version of the Andreadakis problem. A calculation of the Lie algebra of the classical congruence group is also included.
\end{abstract}

\section*{Introduction}
\addcontentsline{toc}{section}{Introduction}

Automorphisms of free groups have been widely studied over the years, from many different points of view. They are linked to the \emph{mapping class groups} of surfaces and braid groups \cite{Farb-Margalit}; they also act on a moduli space of graphs, called the \emph{outer space}, introduced in \cite{Culler-Vogtmann}, which is still actively studied nowadays (see, for instance, \cite{BBM}, or \cite{Francaviglia}). 
Recently, several results have also been obtained regarding the stable homology of these groups \cite{Galatius, RW-Wahl, Djament-Vespa, Djament}. 

One way to try and understand the structure of these automorphism groups is to cut them into pieces, by considering a family of subgroups and studying how these interact with each other. Such families of subgroups can arise from the action on the free group $F_n$ and related geometric objects, as is the case with the \emph{automorphisms with boundaries} (see for instance \cite{Jensen-Wahl} or \cite{Day-Putman_Birman_seq}), and for the \emph{Andreadakis subgroups}, which we now focus on.

The first Andreadakis subgroup of $\Aut(F_n)$ is the $IA$-group. Precisely, we can first look at how automorphisms act on $F_n^{ab} \cong \Z^n$. That is, we can consider the projection from $\Aut(F_n)$ onto $GL_n(\Z)$. We then put aside this linear part by considering only $IA_n$, the subgroup of automorphisms acting trivially on $\Z^n$, which is an algebraic analogue of the \emph{Torelli subgroup} of the mapping class group. An explicit finite set of generators of $IA_n$ has been known for a long time \cite{Nielsen} -- see also \cite[5.6]{BBM}. Nevertheless, the structure of $IA_n$ remains largely mysterious. For instance, $IA_3$ is not finitely presented \cite{Krstic}, and it is not known if $IA_n$ is finitely presented for $n > 3$. Recent results about the $IA$-groups include the finite $L$-presentation of $IA_n$ given in \cite{Day-Putman}, or finiteness results on the lower central series of $IA_n$ obtained in \cite{Church}.

The $IA$-group is the first step of the \emph{Andreadakis filtration} $IA_n = \mathcal A_1 \supseteq \mathcal A_2 \supseteq \cdots $, in which $\mathcal A_j$ is the group of automorphisms acting trivially on $F_n/\Gamma_{j+1}(F_n)$, where $F_n$ is filtered by its lower central series $F_n = \Gamma_1(F_n) \supseteq \Gamma_2(F_n) \supseteq\cdots$. The Andreadakis filtration is an \emph{$N$-series}. As such, it contains the minimal $N$-series on $IA_n$, its lower central series: for all $k$, $\mathcal A_k \supseteq \Gamma_k(IA_n)$. We are thus led to the problem of comparing these filtrations, that we call the \emph{Andreadakis problem}. 

Since the two filtrations are $N$-series, the associated graded objects are graded Lie rings (that is, Lie algebras over $\Z$), the Lie bracket being induced by the commutator map $(x, y) \mapsto [x,y] = xyx^{-1}y^{-1}$. The inclusion $i:\Gamma_*(IA_n) \subseteq \mathcal A_*$ induces a morphism of Lie rings:
\begin{equation}\label{Lie_morphism}
i_*: \Lie(\Gamma_*(IA_n)) = \bigoplus\limits_{j \geq 1} \Gamma_j(IA_n)/\Gamma_{j+1}(IA_n) \longrightarrow \Lie(\mathcal A_*) = \bigoplus\limits_{j \geq 1} \mathcal A_j/\mathcal A_{j+1}.
\end{equation}
Thus, the Andreadakis problem translates into the following question:
\begin{pb}[Andreadakis]\label{pb_Andreadakis}
How close is the morphism \eqref{Lie_morphism} to be an isomorphism? 
\end{pb}
Andreadakis conjectured that the filtrations were the same \cite[p.\ 253]{Andreadakis}. In \cite{Bartholdi}, Bartholdi disproved the conjecture, using computer calculations.  He then tried to prove that the two filtrations were the same up to finite index, but in the erratum \cite{Bartholdi1}, he showed that even this weaker statement cannot be true. His proof uses the $L$-presentation of $IA_n$ given in \cite{Day-Putman}, to which he applies algorithmic methods described in \cite{BBH} to calculate (using the software GAP) the first degrees of the Lie algebra associated to each filtration. 

\medskip

In this paper, we are interested in the difference between $\mathcal A_k(F_n)$ and $\Gamma_k(IA_n)$ for $n \gg k$, that is, in the stable range. We thus ask the following question:
\begin{pb}[Andreadakis - stable version]\label{pb_Andreadakis_stable}
How close is the morphism 
\begin{equation}\label{Lie_morphism_k}
i_*: \Lie_k(\Gamma_*(IA_n)) = \Gamma_k(IA_n)/\Gamma_{k+1}(IA_n) \longrightarrow \Lie_k(\mathcal A_*) = \mathcal A_k/\mathcal A_{k+1}
\end{equation} 
to be an isomorphism when $n \gg k$? 
\end{pb}

Our main goal here is to show the following partial answer to this question.
\begin{reptheo}{stable_surj}[Stable surjectivity]
When $n \geq k+2$, the morphism \eqref{Lie_morphism_k} is surjective.
\end{reptheo}

A (weaker) rational version of this theorem has been obtained independently by Massuyeau and Sakasai \cite[th. 5.1]{Massuyeau2}. Like them, we prove it by building on results from \cite{Satoh2}, but using quite different methods. These methods include a description of Andreadakis-like filtrations \emph{via} a categorical framework, allowing us to state and study a $p$-restricted version of the problem. We answer the questions asked in \cite{Massuyeau1} about this problem, and use our answers to study the stable $p$-restricted Andreadakis problem. Also, we solve the stable $q$-torsion Andreadakis problem for $\Z^n$, getting a complete calculation of the Lie ring of the congruence group $\GL_n(p \Z)$ for $n \geq 5$.

\bigskip

Let us now describe in more detail the methods we use and the results contained in the present paper. In \textbf{section \ref{Generalities}}, we set up a general framework for understanding $N$-series and their associated Lie algebras. We introduce a category $\mathcal{SCF}$ of $N$-series. We remark that the categorical definition of an \emph{action} of an object on another makes sense in this category. This allows us to interpret an old construction of Kaloujnine (see theorem \ref{Kaloujnine}) as the construction of universal actions in $\mathcal{SCF}$. This category is thus \emph{action-representative}, a situation studied in \cite{BJK, Borceux-Bourn, Bourn}. 
Using this language, we are able to recover and generalize several classical constructions:
\begin{itemize}
\item Taking the graded rings associated to $N$-series gives a functor $\Lie$ from $\mathcal{SCF}$ to the category of Lie rings. This functor preserves actions, and the \emph{Johnson morphism} admits a nice generalisation as the classifying morphism associated to an action between Lie rings obtained from an action in $\mathcal{SCF}$. 
\item \emph{Lazard's classical construction} of $N$-series from algebra filtrations described in \cite{Lazard} is recovered as a particular case of Kaloujnine's construction.
\item We also obtain the \emph{filtrations on congruence groups} studied in \cite{Lopez}.
\end{itemize}
In particular, we show that the filtration given by the last construction on the classical congruence group $\GL_n(q \Z)$ coincides with its lower central series when $n \geq 5$. As a consequence, we get an explicit calculation of this group's Lie ring (generalizing \cite[Th.\ 1.1]{Lee-Szczarba}, which is the degree-one part):
\begin{repcor}{Lie(GL_n(qZ))}
For all $n \geq 5$ and all $q \geq 3$, there is a canonical isomorphism of graded Lie rings (in degrees at least one):
\[\Lie(GL_n(q\Z)) \cong  \mathfrak{sl}_n(\Z/q)[t],\]
where the degree of $t$ is $1$, and the Lie bracket of $Mt^i$ and $Nt^j$ is $[M,N]t^{i+j}$.
\end{repcor}

\medskip

\textbf{Section \ref{Section_stable_surj}} deals with the proof of our stable surjectivity result (Theorem \ref{stable_surj}). The proof relies on the constructions of the first section, applied to Fox's free differential calculus. The \emph{Jacobian matrix} map $D : f \mapsto Df$ turns out to be a \emph{derivation} from $\Aut(F_n)$ to $\GL_n(\Z F_n)$, sending the Andreadakis filtration to the congruence filtration $\GL_n((I F_n)^*)$ (the group algebra $\Z F_n$ being filtered by the powers of its augmentation ideal $IF_n$). We then study such derivations, and the maps they induce on the graded Lie rings associated to $N$-series they preserve. We thus show that the \emph{trace map} defined by $\tr(f) = \tr(Df-\Id{n})$ induces a well-defined map:
\[\tr: \Lie(\mathcal A_*) \longrightarrow \gr(\Z F_n).\]
The graded algebra $\gr(\Z F_n)$ is in fact the tensor algebra $TV$ over $V = F_n^{ab}$. A result from \cite{Bryant} implies that this trace map takes values in $[TV, TV]$. Studying the links between free differential calculus and differential calculus in $TV$, we show that this trace map is exactly the one introduced by Morita \cite[Def. 6.4]{Morita}, getting the explicit description in terms of contraction maps notably used by Satoh in \cite{Satoh2}. Denoting the Johnson morphisms by $\tau$ and $\tau'$, we get a commutative diagram of graded linear maps:
\[\begin{tikzcd}
\Lie(IA_n) \ar[d, swap, "i_*"] \ar[rd, "\tau'"] &&\\
\Lie(\mathcal A_*) \ar[r, hook, "\tau"] 
&\Der(\mathfrak LV) \ar[r, "\tr_M"]
&TV,
\end{tikzcd}\]
where $\tr_M \circ \tau = \tr$
This gives the following inclusions of subspaces of $\Der(\mathfrak LV)$:
\[\ima(\tau') \subseteq \Lie(\mathcal A_*) \subseteq \tr_M^{-1}([TV,TV]).\]
We observe that calculations from \cite{Satoh2} work over $\Z$. From this , we deduce that the subspaces $\ima(\tau')$ and $\tr_M^{-1}([TV,TV])$ are stably the same, so these inclusions are equalities in the stable range, and $i_*$ must be stably surjective. We close the section by investigating some of the consequences of this result for automorphisms of free nilpotent groups.

\medskip

In \textbf{section \ref{Section-p}}, we turn to the \emph{$p$-restricted} version of the Andreadakis problem. Precisely, we can do the same construction as above, replacing the lower central series $\Gamma_*(F_n)$ by the mod-$p$ lower central series $\Gamma_*^{[p]}(F_n)$, which is an $N_p$-series:
\[(\Gamma_i^{[p]})^p \subseteq \Gamma_{pi}^{[p]}.\] 
Kaloujnine's construction gives an associated Andreadakis filtration $\mathcal A_*^{[p]}$ on the group $IA^{[p]}$ of automorphisms of $F_n$ acting trivially on $F_n^{ab} \otimes \F_p$. This filtration was shown in \cite{Massuyeau1} to be an $N_p$-series. It then contains the minimal $N_p$-series $\Gamma_*^{[p]}(IA^{[p]})$. Whence the natural question:
\begin{pb}[Andreadakis -- $p$-restricted version]\label{pb_Andreadakis-p}
What is the difference between the $N_p$-series $\mathcal A_*^{[p]}$ and $\Gamma_*^{[p]}(IA_n^{[p]})$ ? 
\end{pb}
Answering to \cite[rk. 8.6]{Massuyeau1}, we show that these two filtrations fit in the same kind of nice machinery as their classical counterparts, but they turn out to always differ. The paper ends on a quantification of the lack of stable surjectivity in this $p$-restricted case (see Proposition \ref{lack_of_stable_surj-p}).

\bigskip

\noindent
\textbf{Acknowledgements}: This work is part of the author's PhD thesis. The author is indebted to his advisors, Antoine Touz\'e and Aur\'elien Djament, for their constant support, countless helpful discussions, and numerous remarks and comments on earlier versions of the present paper. He also thanks Gwena\"el Massuyeau and Takao Satoh, who kindly agreed to be the reviewers of his thesis, for their useful observations and comments.

\tableofcontents

\section{Generalities on strongly central series}\label{Generalities}

\subsection{Notations and reminders}\label{rappels}

Throughout the paper, $G$ will denote an arbitrary group, and $\k$ a commutative unitary ring. The left and right action of $G$ on itself by conjugation are denoted respectively by $x^y = y^{-1}xy$ and ${}^y\! x = yxy^{-1}$.
The \emph{commutator} of two elements $x$ and $y$ in $G$ is denoted by:
\[[x,y]:= xyx^{-1}y^{-1}.\]
If $A$ and $B$ are subsets of $G$, we denote by $[A, B]$ the subgroup generated by the commutators $[a,b]$ with $(a,b) \in A \times B$.
If $A$ and $B$ are stable by conjugation by elements of $G$ (resp. by all automorphisms of $G$), then $[A,B]$ is a normal (resp. characteristic) subgroup of $G$.
For instance, $[G,G]$ is a characteristic subgroup of $G$, called the  \emph{derived subgroup} of $G$. The quotient $G^{ab}:= G/[G,G]$ is the \emph{abelianization} of $G$, its bigger abelian quotient. 
The derived subgroup is the second step of a filtration of $G$ by characteristic subgroups:

\begin{defi}
The \emph{lower central series} of $G$, denoted by $\Gamma_*(G)$, or shortly $\Gamma_*$, is the filtration of $G$ defined by:
\[\begin{cases} \Gamma_1:= G, \\ \Gamma_{k+1}:= [G, \Gamma_k]. \end{cases}\]
\end{defi}

\begin{defi}
A group $G$ is said to be \emph{nilpotent} if its lower central series stops. The least integer $c$ such that $\Gamma_{c+1}(G) = \{1\}$ is then $G$'s \emph{nilpotency class}.
More generally, $G$ is said to be \emph{residually nilpotent} if its lower central series is \emph{separated}, \emph{i.e.} if: 
\[\bigcap\limits_i \Gamma_i(G) = \{1\}.\] 
\end{defi}
One can easily check the following formulas:
\begin{prop}\label{formules}
For all $x, y, z \in G$,
\begin{itemize} [itemsep=-0.3em,topsep=0pt]
  \item $[x,x] = 1$,
  \item $[x,y]^{-1} = [y, x]$,
  \item $[x, yz] = [x,y] \left(^y[x,z] \right),$
\smallskip
	\item $\left[[x,y], {}^y\! z \right] \cdot \left[[y,z], {}^z\! x \right] \cdot \left[[z,x], {}^x\! y\right] = 1,$
\smallskip
	\item $\left[[x,y^{-1}], z^{-1} \right]^x \cdot \left[[z,x^{-1}], y^{-1} \right]^z \cdot \left[[y,z^{-1}], x^{-1}\right]^y = 1.$
\end{itemize}
\end{prop}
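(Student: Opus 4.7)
The plan is to verify each identity by direct manipulation starting from the definition $[x,y] = xyx^{-1}y^{-1}$. The first two formulas are one-line calculations: $[x,x] = xxx^{-1}x^{-1} = 1$, and $[x,y]^{-1} = (xyx^{-1}y^{-1})^{-1} = yxy^{-1}x^{-1} = [y,x]$. No argument beyond the definition is needed.

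For the third identity, I would expand both sides as words in $x,y,z$. The left-hand side is $[x,yz] = xyzx^{-1}z^{-1}y^{-1}$, while the right-hand side is $[x,y] \cdot y[x,z]y^{-1} = xyx^{-1}y^{-1} \cdot y(xzx^{-1}z^{-1})y^{-1}$. The factor $y^{-1} \cdot y$ cancels at the interface, and comparing letters shows the two words coincide. The only ``trick'' is to notice the cancellation caused by the conjugation by $y$.

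For the two Hall--Witt identities, my strategy is the same in each case: expand the outer commutators using the definition, then track the cancellations forced by the cyclic symmetry of each product under $x \to y \to z \to x$. Concretely, for the fourth formula I would write $[[x,y], {}^y z] = [x,y] \cdot yzy^{-1} \cdot [y,x] \cdot yz^{-1}y^{-1}$, do the same for the two cyclic images, and then multiply the three factors; the resulting long word in $x,y,z$ and their inverses collapses to $1$ via pairwise cancellations dictated by the symmetry. The fifth formula is handled in the same spirit, with the right-conjugations $(\cdot)^x$, $(\cdot)^y$, $(\cdot)^z$ playing the role of the left-conjugations, and the internal inverses $y^{-1}, z^{-1}, x^{-1}$ shifting the pattern of cancellations but not their mechanism.

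The main obstacle is purely bookkeeping: each Hall--Witt expansion produces many letters, and organizing the cancellation cleanly without mistakes demands discipline. A more conceptual route would be to reduce to the free group on three generators and exploit its embedding into its group ring filtered by the augmentation ideal, where the Hall--Witt identity becomes the group-theoretic shadow of the Jacobi identity in the associated graded Lie ring (a viewpoint that fits naturally with the later sections of the paper). For the proposition as stated, however, a careful direct word-level verification is the most self-contained path.
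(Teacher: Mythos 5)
Your proposal is correct, and it coincides with what the paper intends: the proposition is stated with ``One can easily check the following formulas'' and no written proof, the expected argument being exactly the direct word-level expansion and cancellation you describe. Your expansions of the third and fourth identities are accurate, and the fifth follows by the same mechanism.
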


The last ones are two versions of the \emph{Witt-Hall identity}, which implies the following:

\begin{lem}[$3$-subgroups lemma]\label{lem3sgrp}
Let $A$, $B$ and $C$ be three subgroups of a group $G$. If two of the three following subgroups are trivial, then so is the third:
\[[A, [B, C]],\ \ \ [B, [C, A]],\ \ \ [C, [A, B]].\]
Equivalently, one of them is contained in the normal closure of the two others.
\end{lem}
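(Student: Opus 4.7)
The plan is to prove the stronger normal-closure statement — the triviality statement is then the special case where that normal closure happens to be trivial. The whole argument is a single application of the second Witt--Hall identity from Proposition \ref{formules},
\[[[x,y^{-1}], z^{-1}]^x \cdot [[z,x^{-1}], y^{-1}]^z \cdot [[y,z^{-1}], x^{-1}]^y = 1.\]

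I would first fix arbitrary $x \in A$, $y \in B$, $z \in C$ and locate each factor. Since subgroups are closed under inversion, $[x, y^{-1}] \in [A, B]$, and hence $[[x, y^{-1}], z^{-1}] \in [[A, B], C] = [C, [A, B]]$; likewise the second factor lies in $[B, [C, A]]$ and the third in $[A, [B, C]]$. Isolating the first factor in the identity expresses it as a product of conjugates (by $x$, $y$, $z$) of inverses of the two others. Thus every generating commutator of the form $[[a, b], c]$ lies in the normal closure $N$ in $G$ of $[A, [B, C]] \cup [B, [C, A]]$.

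To pass from these generators to all of $[C, [A, B]]$, I would use the expansion formula $[x, yz] = [x, y] \cdot {}^y[x, z]$ from Proposition \ref{formules}: it implies that any commutator $[c, u]$ with $c \in C$ and $u \in [A, B]$ is a product of $G$-conjugates of commutators of the form $[c, [a_i, b_i]]^{\pm 1}$. Since each such generator lies in $N$ and $N$ is normal, so does every such product; hence $[C, [A, B]] \subseteq N$. The two remaining inclusions follow from the cyclic symmetry $(x, y, z) \mapsto (y, z, x)$ of the Witt--Hall identity (which permutes the roles of $A$, $B$, $C$).

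I do not foresee any real obstacle: the entire argument is essentially bookkeeping around the Witt--Hall identity. The only points requiring care are checking the membership of each factor in the claimed commutator subgroup — which relies only on inversion-closure of subgroups — and the extension from generating commutators to the full subgroup via the standard commutator-expansion formula.
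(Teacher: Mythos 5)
Your argument is correct and is exactly the route the paper intends: the paper states Lemma \ref{lem3sgrp} as a direct consequence of the Witt--Hall identities in Proposition \ref{formules} without writing out the details, and your proposal simply fills in that bookkeeping (locating the three factors, passing to the normal closure, and extending from the generating commutators $[c,[a,b]]$ to all of $[C,[A,B]]$ via the expansion $[x,yz]=[x,y]\,{}^y[x,z]$). No gaps.
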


\subsection{Strongly central filtrations and Lie algebras}

The theory of strongly central series has notably been studied by M. Lazard \cite{Lazard}.

\begin{defi}\label{def_SCF}
Let $G$ be a group. A \emph{strongly central filtration} of $G$ (also called \emph{strongly central series} or \emph{$N$-series}) is a filtration 
\[G = G_1 \supseteq \cdots \supseteq G_i \supseteq \cdots\] 
of $G$ by subgroups, satisfying:
\[\forall i, j \geq 1,\ [G_i, G_j] \subseteq G_{i+j}.\]
\end{defi}

Remark that indexation has to begin from $G = G_1$. In particular, $[G, N_i] \subseteq N_{i+1} \subseteq N_i$, which means exactly that the $N_i$ are \emph{normal} subgroups of $G$.

\medskip

\begin{prop}\label{gammaSCF}
Let $G$ be a group. The lower central series $\Gamma_*(G)$ is a strongly central series on $G$, and it is the minimal one.
\end{prop}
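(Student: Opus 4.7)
The plan is to split the proposition into two independent claims: first, that $\Gamma_*$ satisfies the strongly central condition $[\Gamma_i, \Gamma_j] \subseteq \Gamma_{i+j}$; second, that any strongly central filtration $G_*$ with $G_1 = G$ contains $\Gamma_*$ termwise.

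For the first claim, I would fix $i \geq 1$ and induct on $j$. The base case $j = 1$ is immediate: since $[x,y]^{-1} = [y,x]$ by Proposition \ref{formules}, we have $[\Gamma_i, G] = [G, \Gamma_i] = \Gamma_{i+1}$ directly from the definition. For the inductive step, I would write $\Gamma_j = [G, \Gamma_{j-1}]$ and apply the $3$-subgroups lemma (Lemma \ref{lem3sgrp}) to the triple $(\Gamma_i, G, \Gamma_{j-1})$, working modulo $\Gamma_{i+j}$ (which is normal in $G$). Concretely, the lemma reduces the problem to showing that both
\[
[G, [\Gamma_{j-1}, \Gamma_i]] \quad \text{and} \quad [\Gamma_{j-1}, [\Gamma_i, G]]
\]
lie in $\Gamma_{i+j}$. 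The first is contained in $[G, \Gamma_{i+j-1}] = \Gamma_{i+j}$ by the induction hypothesis (applied at index $j-1$) followed by the definition; the second equals $[\Gamma_{j-1}, \Gamma_{i+1}]$, which lies in $\Gamma_{i+j}$ by the induction hypothesis applied with $i+1$ in place of $i$. So $[\Gamma_i, [G, \Gamma_{j-1}]] = [\Gamma_i, \Gamma_j] \subseteq \Gamma_{i+j}$.

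For the second claim (minimality), I would induct on $k$. The base case $\Gamma_1 = G = G_1$ is the indexation convention. Assuming $\Gamma_{k-1} \subseteq G_{k-1}$, we obtain
\[
\Gamma_k = [G, \Gamma_{k-1}] \subseteq [G_1, G_{k-1}] \subseteq G_k,
\]
using the strongly central condition for $G_*$ with indices $1$ and $k-1$.

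The only subtle step is the inductive use of the $3$-subgroups lemma in the first claim: one must apply the induction hypothesis at two different pairs of indices simultaneously, and work in the quotient $G/\Gamma_{i+j}$ so that the hypothesis of the lemma (two subgroups being trivial) is satisfied. Alternatively, one may invoke the ``normal closure'' reformulation stated in Lemma \ref{lem3sgrp}, which avoids passing to the quotient explicitly. No genuinely new calculation is required, since the commutator identities needed are already collected in Proposition \ref{formules}.
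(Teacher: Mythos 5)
Your proof is correct and follows the same route as the paper, which simply states that strong centrality is shown by induction using the $3$-subgroups lemma and that minimality follows by a straightforward induction; you have filled in exactly those details. The only caveat, which you already flag yourself, is that the induction on $j$ must carry the statement quantified over all $i$ (since the inductive step invokes the hypothesis at both $i$ and $i+1$), so the opening phrase ``fix $i$'' should be read as ``for all $i$ simultaneously.''
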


\begin{proof}
The strong centrality is shown by induction, using the $3$-subgroup lemma \ref{lem3sgrp}. Given a strongly central filtration $G_*$ of $G = G_1$, a straightforward induction then gives: $G_i \supseteq \Gamma_i(G)$ for any $i \geq 1$.
\end{proof}

\medskip

Let $G = G_1 \supseteq \cdots \supseteq G_k \supseteq \cdots$ be any strongly central filtration of a group $G$.
The quotients $\Lie_i(N_*):= N_i/N_{i+1}$ are abelian (for any $i \geq 1$), since $[N_i, N_i] \subseteq N_{2i} \subseteq N_{i+1}.$
The graded abelian group
\[\Lie(N_*):= \bigoplus\limits_{i \geq 1}{\Lie_i(N_*)},\]
is endowed with a bracket induced by the commutator map $(x, y) \mapsto [x, y]$ of $G$. Using the formulas \ref{formules}, one easily checks that this defines a Lie bracket: $\Lie(G_*)$ is a Lie ring (\emph{i.e.\ }a Lie algebra over $\Z$).

\medskip

\begin{nota}\label{nota_Lie}
We denote $\Lie(\Gamma_*(G))$ by $\Lie(G)$ (that is, if we do not specify a strongly central filtration on a group, it is understood to be filtered by its lower central series).
\end{nota}

\begin{ex}\label{L(Fn)_free}
If $G $ is a free group, then $\Lie(G)$ is the free Lie algebra over the $\Z$-module $G^{ab}$ \cite[th.\ 4.2]{Lazard}.
\end{ex}

As products of commutators become sums of brackets inside the Lie algebra, the following fundamental property follows from the definition of the lower central series:
\begin{prop}\label{engdeg1} 
The Lie ring $\Lie(G)$ is \emph{generated in degree $1$}. Precisely, it is generated (as a Lie ring) by $\Lie_1(G) = G^{ab}$. As a consequence, if $G$ is of finite type, then each $\Lie_n(G)$ is too.
\end{prop}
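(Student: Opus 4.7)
The plan is a straightforward induction on $n$ showing that every element of $\Lie_n(G)$ is a $\Z$-linear combination of iterated Lie brackets of length $n$ of elements of $\Lie_1(G) = G^{ab}$. The base case $n=1$ is tautological.

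For the inductive step, I would exploit the very definition of the lower central series: $\Gamma_n = [G, \Gamma_{n-1}]$ is, by definition, the subgroup generated by commutators $[g,h]$ with $g \in G$ and $h \in \Gamma_{n-1}$. Hence every element of $\Gamma_n$ is a product of such commutators. Passing to the quotient $\Lie_n = \Gamma_n/\Gamma_{n+1}$, which is abelian by the general remark preceding the statement, products become sums, so $\Lie_n$ is generated as an abelian group by the classes of commutators $[g,h]$ with $g \in G$, $h \in \Gamma_{n-1}$. But the class in $\Lie_n$ of such a commutator is precisely the Lie bracket $[\bar g, \bar h]$ where $\bar g \in \Lie_1$ and $\bar h \in \Lie_{n-1}$, by the very construction of the bracket on $\Lie(G)$. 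Thus
\[
\Lie_n(G) = \bigl[\Lie_1(G),\, \Lie_{n-1}(G)\bigr]_{\Lie}
\]
as $\Z$-modules. Applying the induction hypothesis to $\Lie_{n-1}(G)$, any element of $\Lie_{n-1}$ is a sum of $(n-1)$-fold iterated brackets from $\Lie_1$; bracketing once more with elements of $\Lie_1$ and using bilinearity yields the claim for $\Lie_n$. Since the bracket is bilinear, this shows that $\Lie(G)$ is generated as a Lie ring by $\Lie_1(G) = G^{ab}$.

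For the finite type statement, if $G$ is finitely generated, then so is its abelian quotient $G^{ab} = \Lie_1(G)$; let $x_1, \dots, x_r$ be generators. By the argument above, $\Lie_n(G)$ is spanned as a $\Z$-module by the $n$-fold iterated brackets in the $x_i$'s, of which there are finitely many (at most $r^n$ once one fixes a bracketing scheme, e.g.\ right-normed brackets). Hence each $\Lie_n(G)$ is a finitely generated abelian group.

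There is no real obstacle here: the heart of the argument is just unwinding the definitions $\Gamma_{k+1} = [G,\Gamma_k]$ and of the Lie bracket on $\Lie(G)$ induced by the group commutator. The only point worth noting carefully is that $[G,\Gamma_{n-1}]$ is generated by \emph{commutators}, not just normally generated, so that one does not need to invoke the normality of $\Gamma_n$ separately; this makes the passage from group commutators to Lie brackets in $\Lie_n$ entirely transparent.
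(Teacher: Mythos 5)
Your proof is correct and is exactly the argument the paper has in mind: the paper dispenses with a formal proof, noting only that ``products of commutators become sums of brackets inside the Lie algebra'' so that the statement follows from the definition $\Gamma_{n} = [G,\Gamma_{n-1}]$, and your write-up is precisely that observation carried out by induction. The point you flag at the end --- that $[G,\Gamma_{n-1}]$ is generated by commutators, not merely normally generated --- is indeed the paper's convention for $[A,B]$, so the passage to $\Lie_n(G) = [\Lie_1(G),\Lie_{n-1}(G)]$ is as transparent as you claim.
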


\subsection{Actions in the category of strongly central filtrations}\label{Actions}

Let $\mathcal{SCF}$ be the category whose objects are the strongly central filtrations, where morphisms between $G_*$ and $H_*$ are the group morphisms from $G_1$ to $H_1$ preserving filtrations. There is a forgetful functor $\omega_1: \mathcal{SCF} \longrightarrow \mathcal Grps$ defined by  $G_* \mapsto G_1$. This functor admits a left adjoint $\Gamma: G \mapsto \Gamma_*(G)$ (see Proposition \ref{gammaSCF}). It also admits a right adjoint $G \mapsto (G, G, ...)$.

\begin{prop}\label{SCF_cocomplete}
The category $\mathcal{SCF}$ is complete and cocomplete, and is homological (but not semi-abelian).
\end{prop}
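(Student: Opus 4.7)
The plan is to exploit the forgetful functor $\omega_1 : \mathcal{SCF} \to \mathcal{Grps}$, which has the left adjoint $\Gamma$ (Proposition \ref{gammaSCF}) and the right adjoint $G \mapsto (G, G, \ldots)$, and therefore preserves both limits and colimits. Consequently, the underlying group of any (co)limit in $\mathcal{SCF}$ must coincide with the corresponding (co)limit computed in $\mathcal{Grps}$; all that remains is to equip this underlying group with the correct strongly central filtration and verify the universal property against arbitrary cones or cocones in $\mathcal{SCF}$.

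For completeness, I would compute limits componentwise: the product of a family $(G_*^\alpha)$ is filtered by $\prod_\alpha G_i^\alpha$ in degree $i$, while the equalizer of $f, g : G_* \to H_*$ is filtered by intersecting the underlying equalizer with each $G_i$. Strong centrality is immediate since commutators are computed componentwise and intersections are commutator-compatible. For cocompleteness, let $C$ be the colimit in $\mathcal{Grps}$ of the underlying diagram, with canonical morphisms $\phi_\alpha : D(\alpha)_1 \to C$, and equip $C$ with the smallest strongly central filtration making every $\phi_\alpha$ filtered: the subgroup $C_n$ is generated by iterated commutators of elements drawn from the images $\phi_\alpha(D(\alpha)_j)$, where the sum of the indices $j$ used is at least $n$. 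Strong centrality reduces to expanding arbitrary commutators via the identities of Proposition \ref{formules}, and the universal property is automatic from the construction.

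For the homological structure, note first that the trivial filtration on the trivial group is a zero object, so $\mathcal{SCF}$ is pointed. Protomodularity can be deduced from the corresponding property in $\mathcal{Grps}$: given a morphism between two split extensions in $\mathcal{SCF}$, the middle arrow is already a group isomorphism by protomodularity of $\mathcal{Grps}$, and its inverse respects the filtration by a diagram chase in which one uses the splittings to decompose each element as a product of a piece lying in the kernel and a piece coming from the base. Combined with finite completeness this yields the homological structure in the sense of \cite{Borceux-Bourn}.

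The main obstacle, and the part requiring the most care, is proving that $\mathcal{SCF}$ is not semi-abelian. Since cocompleteness provides finite coproducts, the failing axiom must be Barr-exactness. The key observation is that regular epimorphisms in $\mathcal{SCF}$ are exactly the filtered surjections $f : G_* \to H_*$ satisfying $H_i = f(G_i)$ for every $i$, whereas quotients by normal subobjects in general carry strictly coarser filtrations; the categorical image factorization therefore does not match the naive one given by images in $\mathcal{Grps}$. From this discrepancy one exhibits an internal equivalence relation that is not effective, for instance by comparing the lower central series $\Gamma_*(\Z)$ with the $p$-adic filtration on $\Z$, whose various pullbacks and quotients disagree at the level of the filtration while remaining isomorphic at the level of underlying groups.
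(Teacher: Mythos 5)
Your construction of limits and colimits is essentially the paper's argument: the two adjoints of $\omega_1$ pin down the underlying group, and one then takes the maximal compatible strongly central filtration for limits and the minimal one for colimits (the paper realizes the latter as the intersection of all compatible strongly central filtrations rather than by an explicit commutator-generation, but these coincide, since strongly central filtrations are stable under intersection). That part is sound. The two remaining assertions, however, are where your proposal has real gaps.

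First, ``pointed $+$ protomodular $+$ finitely complete'' is not the definition of a homological category: you also need \emph{regularity}, i.e.\ that every morphism factors as a regular epimorphism followed by a monomorphism and that regular epimorphisms are pullback-stable. You correctly identify the regular epimorphisms of $\mathcal{SCF}$ as the surjections $f$ with $H_i = f(G_i)$, but you never use this to produce the factorization $G_* \twoheadrightarrow f(G_*)_* \hookrightarrow H_*$ (with the image filtered by the $f(G_i)$) nor check pullback-stability; as written, your list of verified axioms does not yield ``homological''. Second, and more seriously, the non-semi-abelianness is not actually proved. You correctly reduce the question to exhibiting a non-effective internal equivalence relation, but ``comparing $\Gamma_*(\Z)$ with the $p$-adic filtration, whose pullbacks and quotients disagree'' is not such an exhibit: no internal equivalence relation $R_* \rightrightarrows G_*$ is written down, and no argument shows it is not a kernel pair. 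The mechanism (which the paper points to) is that a kernel pair, being a limit, necessarily carries the \emph{induced} (maximal) filtration on $R_1 \leq G_1 \times G_1$, whereas a subobject in $\mathcal{SCF}$ may carry a strictly finer one. A concrete witness: on $G_* = (\Z \supseteq p\Z \supseteq p^2\Z \supseteq \cdots)$, take $R_1 = \Z \times \Z$ and, for $i \geq 2$, $R_i = \{(a,b) \in G_i \times G_i \mid a - b \in G_{i+1}\}$. One checks this is an internal equivalence relation in $\mathcal{SCF}$ (reflexivity, symmetry and transitivity are all filtration-preserving), yet $R_i \subsetneq G_i \times G_i$ for $i \geq 2$, so $R_*$ is not the induced filtration on $R_1$ and hence cannot be the kernel pair of any morphism. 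Without something of this kind, the final claim of the proposition is unproved.
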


A general reference on homological categories is \cite{BB}. The reader can also consult \cite{Hartl} for a simple version of the axioms defining homological cocomplete and semi-abelian categories.

\begin{proof}[Proof of Proposition \ref{SCF_cocomplete}]
The forgetful functor $\omega_1$ admits both a left and a right adjoint, so it has to commute to limits and colimits. It does not create either of them (in the sense of \cite{McLane}, V.1), but it almost does.

Precisely, let $F: \mathcal D \longrightarrow \mathcal{SCF}$ be a diagram. The colimit $G_\infty$ of the group diagram $\omega_1 F$ is in general endowed with several strongly central filtrations making $\omega_1 F(d) \longrightarrow G_\infty$ into filtration-preserving morphisms (for instance the trivial one). One checks easily that the minimal such filtration (which is the intersection of all those) is the colimit of $F$.

Similarly, the limit $G^\infty$ of the group diagram $\omega_1 F$ is endowed with several strongly central filtrations making $\varphi_d: G^\infty \longrightarrow \omega_1 F(d)$ into filtration-preserving morphisms (for instance its lower central series). However, the maximal such filtration is the limit of $F$. It is explicitly described as:
\[G^\infty_* = \bigcap\limits_d \varphi_d^{-1}(F(d)).\]  

To check that $\mathcal{SCF}$ is homological, one can check the axioms given in \cite{Hartl}. It is not semi-abelian, because there are equivalence relation $R_* \subseteq G_*^2$ for which $R_*$ is not the induced filtration on $R_1$ (like in topological groups -- or more generally in the categories of topological algebras considered in \cite{Borceux-Clementino} -- where an equivalence relation does not have to be endowed with the induced topology).
\end{proof}

In a homological category, we need to distinguish between usual epimorphisms (resp. monomorphisms) and \emph{regular} ones, that is, the ones obtained as coequalizers (resp. equalizers). In $\mathcal{SCF}$, the former are the $u$ such that $u_1 = \omega_1(u)$ is an epimorphism (resp. a monomorphism), whereas the latter are \emph{surjections} (resp. \emph{injections}):
\begin{defi}\label{def_inj-surj}
Let $u: G_* \longrightarrow H_*$ be a morphism in $\mathcal{SCF}$. It is called an \emph{injection} (resp.\ a \emph{surjection}) when $u_1$ is injective (resp.\ surjective) and $u^{-1}(H_i) = G_i$ (resp.\ $u(G_i) = H_i$) for all $i$.
\end{defi}

Examples of homological categories include abelian categories, the category $\mathcal Grps$ of groups, or the category $\mathcal Lie$ of Lie algebras. The usual lemmas of homological algebra (the nine lemma, the snake lemma, the five lemma...) are true in these categories. Homological categories differ from abelian ones notably by the fact that in general, two split extensions between the same objects are not isomorphic (by an isomorphism preserving the splittings). This allows us to define an action of an object on another. 

\begin{defi}\label{def_action}
Let $\mathcal C$ be a homological category. If $X$ and $Z$ are two objects of $\mathcal C$, we define an \emph{action} of $Z$ on $X$ as a split extension (with a given splitting):
\[\begin{tikzcd} X \ar[r] & Y \ar[r] & Z. \ar[l, bend right]
\end{tikzcd}\]
When such an action is given, we will say that $Z$ \emph{acts on} $X$, and write: $Z \circlearrowright X$.
\end{defi}
This definition (which needs only the weaker setting of \emph{pointed protomodular categories} to make sense) is motivated by the situation in $\mathcal Grps$, where an action of a group $K$ on a group $G$ is encoded by a semi-direct product structure $G \rtimes K$.

\begin{rmq} The choice of splitting is crucial here. For instance, the canonical extension:
\[\begin{tikzcd}[ampersand replacement=\&] X \ar[r, "\scalebox{0.7}{$\begin{pmatrix} 1 \\ 0 \end{pmatrix}$}"] \& X \times X \ar[r, "\scalebox{0.7}{$\begin{pmatrix} 0 & 1 \end{pmatrix}$}"] \& X
\end{tikzcd}\]
can be split by \scalebox{0.6}{$\begin{pmatrix} 0 \\ 1 \end{pmatrix}$}, or by the diagonal \scalebox{0.6}{$\begin{pmatrix} 1 \\ 1 \end{pmatrix}$}. The first choice gives the trivial action, whereas the second one gives the \emph{adjoint action}, which is highly non-trivial: in $\mathcal Lie$, this gives the adjoint representation; in $\mathcal Grps$, we get the action of a group on itself by conjugation. 
\end{rmq}

\medskip

The set $\Act(Z,X)$ of actions of $Z$ on $X$ is a contravariant functor in $Z$: the restriction of an action along a morphism is defined \emph{via} a pullback. In $\mathcal Grps$, as in $\mathcal Lie$, this functor is representable, for any $X$. Indeed, an action of a group $K$ on a group $G$ is given by a morphism $K \longrightarrow \Aut(G)$. Similarly, an action of a Lie algebra $\mathfrak k$ on a Lie algebra $\mathfrak g$ is given by a morphism $\mathfrak k \longrightarrow \Der(\mathfrak g)$, where $\Der(\mathfrak g)$ is the Lie algebra of \emph{derivations} from $\mathfrak g$ to itself. Recall that a derivation $\partial$ is a linear map satisfying:
\[\partial([x,y]) = [\partial x, y] + [x, \partial y].\]  
The situation when actions are representable has notably been studied in \cite{BJK}, and in several subsequent papers \cite{Borceux-Bourn, Bourn}. The following terminology was introduced in \cite[Def. 1.1]{Borceux-Bourn}:
\begin{defi}
A homological category $\mathcal C$ is said to be \emph{action-representative} when the functor $\Act(-,X)$ is representable, for any object $X \in \mathcal C$.
\end{defi}

Our goal for the rest of this section is to construct universal actions in $\mathcal{SCF}$, getting in particular the following result:
\begin{prop}\label{SCF_action-rep}
The category $\mathcal{SCF}$ is action-representative.
\end{prop}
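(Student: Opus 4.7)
The plan is to construct the universal action explicitly, via the Kaloujnine filtration on automorphism groups. For each $X_* \in \mathcal{SCF}$, write $\Aut(X_*)$ for the subgroup of $\Aut(X_1)$ consisting of automorphisms preserving the filtration $X_*$, and define its \emph{Kaloujnine filtration} by
\[\mathcal{A}_k(X_*) := \{\varphi \in \Aut(X_*) \mid \forall i \geq 1,\ \forall x \in X_i,\ \varphi(x) x^{-1} \in X_{i+k}\}.\]
Kaloujnine's classical theorem asserts that $\mathcal{A}_1(X_*) \supseteq \mathcal{A}_2(X_*) \supseteq \cdots$ is a strongly central filtration on $\mathcal{A}_1(X_*)$, and thus defines an object $\mathcal{A}_*(X_*)$ of $\mathcal{SCF}$. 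The claim is that $\mathcal{A}_*(X_*)$ represents the functor $\Act(-, X_*)$, which is exactly the content of the proposition.

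To verify this, I would first unpack what an action $Z_* \circlearrowright X_*$ looks like. It is a split extension
\[\begin{tikzcd} X_* \ar[r, "\iota"] & Y_* \ar[r, "\pi"] & Z_* \ar[l, bend right, "s"'] \end{tikzcd}\]
in $\mathcal{SCF}$. Its underlying group data is an ordinary semidirect product $Y_1 = X_1 \rtimes_\varphi Z_1$ determined by some $\varphi \colon Z_1 \to \Aut(X_1)$. The injection/surjection conditions of Definition \ref{def_inj-surj}, combined with the fact that $s$ is filtration-preserving, force $Y_i = X_i \cdot s(Z_i)$, which under the semidirect-product identification is exactly $X_i \rtimes Z_i$. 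A short calculation with $[s(z), x] = \varphi(z)(x) \cdot x^{-1}$ shows that the strong centrality of $Y_*$ automatically implies $\varphi$ factors through $\Aut(X_*)$. In summary, the datum of an action reduces to: a group morphism $\varphi \colon Z_1 \to \Aut(X_*)$, subject to the constraint that the filtration $Y_i := X_i \rtimes Z_i$ on $X_1 \rtimes_\varphi Z_1$ be strongly central.

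The remaining step is to translate this strong-centrality constraint. Expanding $[Y_i, Y_j]$ using the commutator identities of Proposition \ref{formules} produces three families of brackets: the $[X_i, X_j]$-type lies in $X_{i+j}$ automatically; the $[s(Z_i), s(Z_j)]$-type is controlled by the strong centrality of $Z_*$ together with $s$ being a homomorphism; and the mixed pieces $[s(z), x] = \varphi(z)(x) \cdot x^{-1}$, for $x \in X_i$ and $z \in Z_j$, lie in $X_{i+j}$ exactly when $\varphi(z) \in \mathcal{A}_j(X_*)$, i.e., when $\varphi$ defines a morphism $Z_* \to \mathcal{A}_*(X_*)$ in $\mathcal{SCF}$. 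Reading this equivalence in both directions yields the desired natural bijection $\Act(Z_*, X_*) \cong \Hom_{\mathcal{SCF}}(Z_*, \mathcal{A}_*(X_*))$, and hence representability.

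The main---and rather mild---obstacle is the commutator bookkeeping of the final step: one must verify that the three bracket types listed really exhaust $[Y_i, Y_j]$ modulo $Y_{i+j}$, using the Hall--Witt-style identities of Proposition \ref{formules} to dispose of higher-order corrections such as conjugates of commutators. This is routine, and in essence the same computation as that underlying Kaloujnine's theorem itself, so no substantively new input is required beyond what the category $\mathcal{SCF}$ has already been set up to supply.
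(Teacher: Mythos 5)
Your proof is correct and follows essentially the same route as the paper, which assembles the argument from Theorem \ref{Kaloujnine} (the Kaloujnine filtration is strongly central), Proposition \ref{actions_dans_SFC} together with Lemma \ref{sdprod} (an action is exactly a semidirect-product filtration $Y_i = X_i \rtimes Z_i$ subject to $[Z_j, X_i] \subseteq X_{i+j}$), and Proposition \ref{Universal_action}. The only cosmetic difference is that you extract $Y_i = X_i \rtimes Z_i$ directly from the injection/surjection conditions of Definition \ref{def_inj-surj}, whereas the paper obtains it from the fact that each forgetful functor $\omega_i$ has a left adjoint and hence preserves kernels of split extensions.
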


A representative for $\Act(-,X)$ is a universal action on $X$. Explicitly, it is an action of an object $\mathcal A(X)$ on $X$ such that any action $Z \circlearrowright X$ is obtained by restriction along a unique morphism $Z \rightarrow \mathcal A(X)$. For instance, in $\mathcal Grps$, the universal action on $G$ is:
\[\begin{tikzcd} G \ar[r, hook] & G \rtimes \Aut(G) \ar[r, two heads] & \Aut(G), \ar[l, bend right]
\end{tikzcd}\]
where the group $G \rtimes \Aut(G)$ is the \emph{holomorph} of $G$. Its underlying set is $G \times \Aut(G)$, endowed with the product defined by $(g, \sigma) \cdot (h, \tau):= (g \sigma(h), \sigma \tau).$

\medskip

The construction of universal actions in $\mathcal{SCF}$ is given by a theorem of Kaloujnine \cite{Kaloujnine1, Kaloujnine2}, quoted by Lazard in \cite[p. 117]{Lazard}:

\begin{theo}[Kaloujnine]\label{Kaloujnine}
Let $G_*$ be a strongly central series. Let $j \geq 1$ be an integer. Let $\mathcal A_j(G_*) \subseteq \Aut(G_*)$ be the subgroup of automorphisms acting trivially on every quotient $G_i/G_{i+j}$. Then $\mathcal A_*(G_*)$ is a strongly central series.
\end{theo}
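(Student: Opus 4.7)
The plan is a direct commutator calculation. First, one unpacks the definition: $\sigma \in \mathcal A_j(G_*)$ iff $\sigma(g) g^{-1} \in G_{i+j}$ for every $i \geq 1$ and every $g \in G_i$; moreover $\mathcal A_j$ is automatically a (normal) subgroup of $\Aut(G_*)$, being the intersection over $i \geq 1$ of the kernels of the natural homomorphisms $\Aut(G_*) \to \Aut(G_i/G_{i+j})$. The descending chain $\mathcal A_1 \supseteq \mathcal A_2 \supseteq \cdots$ is tautological, so the substance of the theorem lies in the strong centrality $[\mathcal A_j, \mathcal A_k] \subseteq \mathcal A_{j+k}$.

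To prove this, I fix $\sigma \in \mathcal A_j$, $\tau \in \mathcal A_k$, and $g \in G_i$, and write $\sigma(g) = g a$ with $a \in G_{i+j}$, $\tau(g) = g b$ with $b \in G_{i+k}$. Since $\tau$ acts trivially on $G_{i+j}/G_{i+j+k}$ and $\sigma$ on $G_{i+k}/G_{i+j+k}$, one has $\tau(a) \equiv a$ and $\sigma(b) \equiv b$ modulo $G_{i+j+k}$. Expanding then yields
\[\sigma\tau(g) \,=\, \sigma(g)\,\sigma(b) \,\equiv\, g a b, \qquad \tau\sigma(g) \,=\, \tau(g)\,\tau(a) \,\equiv\, g b a \pmod{G_{i+j+k}}.\]
Now $[a,b] \in [G_{i+j}, G_{i+k}] \subseteq G_{2i+j+k} \subseteq G_{i+j+k}$, the last inclusion because $i \geq 1$; so $gab \equiv gba$, and therefore $\sigma\tau(g) \equiv \tau\sigma(g)$ modulo $G_{i+j+k}$.

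To convert this congruence into the desired bound on $[\sigma,\tau] = \sigma\tau\sigma^{-1}\tau^{-1}$, I substitute $g' := \sigma^{-1}\tau^{-1}(g) \in G_i$ (this lies in $G_i$ because $\sigma, \tau$ preserve $G_*$): the congruence just proved, applied to $g'$, unfolds directly into $[\sigma,\tau](g) \equiv g \pmod{G_{i+j+k}}$, as required. The only subtle point is getting the indexing right at the step $2i + j + k \geq i + j + k$, which crucially uses that strongly central series in the sense of Definition \ref{def_SCF} are indexed from $1$.

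A conceptually cleaner alternative, well adapted to the categorical framework developed in this section, is to argue inside the holomorph $G \rtimes \Aut(G_*)$: the identity $[\sigma, h] = \sigma(h) h^{-1}$ there translates $\sigma \in \mathcal A_j$ into the commutator condition $[\sigma, G_i] \subseteq G_{i+j}$ for all $i \geq 1$, after which the strong centrality of $\mathcal A_*$ falls out of the $3$-subgroups lemma (Lemma \ref{lem3sgrp}) applied to the triple $(\sigma, \tau, g)$, the relevant subgroup $G_m$ being normal in the holomorph since $\Aut(G_*)$ preserves $G_*$. This is the viewpoint that should naturally extend to the $p$-restricted Andreadakis filtration discussed in Section \ref{Section-p}.
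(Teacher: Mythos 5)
Your proof is correct, but your primary argument is not the one the paper uses. You prove $[\mathcal A_j,\mathcal A_k]\subseteq\mathcal A_{j+k}$ by a direct elementwise computation: writing $\sigma(g)=ga$, $\tau(g)=gb$, using that $\sigma$, $\tau$ act trivially one step deeper to get $\sigma\tau(g)\equiv gab$ and $\tau\sigma(g)\equiv gba$ modulo $G_{i+j+k}$, and then absorbing $[a,b]\in[G_{i+j},G_{i+k}]\subseteq G_{2i+j+k}\subseteq G_{i+j+k}$ (the place where the indexing from $1$ is genuinely used, as you note); the substitution $g'=\sigma^{-1}\tau^{-1}(g)$ correctly converts the congruence into $[\sigma,\tau](g)g^{-1}\in G_{i+j+k}$. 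The paper instead runs the entire argument inside the holomorph $G\rtimes\Aut(G_*)$: since $G_{i+j+k}$ is normal there (normal in $G$ and $\Aut(G_*)$-stable), the two inclusions $[\mathcal A_j,[\mathcal A_k,G_i]]\subseteq G_{i+j+k}$ and $[\mathcal A_k,[G_i,\mathcal A_j]]\subseteq G_{i+j+k}$ feed into the $3$-subgroups lemma (Lemma \ref{lem3sgrp}) to give $[[\mathcal A_j,\mathcal A_k],G_i]\subseteq G_{i+j+k}$ in one stroke — this is exactly the ``conceptually cleaner alternative'' you sketch in your last paragraph (applied to the three subgroups $\mathcal A_j$, $\mathcal A_k$, $G_i$ rather than to a triple of elements). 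The trade-off is the expected one: your computation is self-contained and makes visible exactly where each hypothesis enters, while the holomorph formulation is the one the paper reuses later (e.g.\ in Proposition \ref{transitivity_of_A} and in the $p$-restricted setting), since it turns ``acting trivially on quotients'' into a commutator condition that the general machinery can manipulate.
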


\begin{nota}
As for Lie rings (Notation \ref{nota_Lie}) we abbreviate $\mathcal A_*(\Gamma_*(G))$ to $\mathcal A_*(G)$.
\end{nota}

We can rewrite the definition of $\mathcal A_j(G_*)$ given in the theorem as:
\[\mathcal A_j(G_*) = \ker\left(\Aut(G_*) \longrightarrow \prod\limits_i \Aut\left(G_i/G_{i+j}\right)\right).\]
Identifying $G$ and $\Aut(G)$ to the subgroups $G \times 1$ and $1 \times \Aut(G)$ of the holomorph $G \rtimes \Aut(G)$, we can define the commutator of an automorphism with an element of $G$:
\[[\sigma, g] = \sigma(g)g^{-1}.\]
Note that $[\Aut(G), G] \subseteq G$.
Using this point of view, we can rephrase the previous definition:
\begin{equation}\label{def_A_*}
\mathcal A_j(G_*) = \Enstq{\sigma \in \Aut(G_*)}{\forall i \geq 1,\ [\sigma, G_i] \subseteq G_{i+j}}\ \subseteq\ G_1 \rtimes \Aut(G_*).
\end{equation}

\smallskip

\begin{proof}[Proof of Theorem \ref{Kaloujnine}]
We abbreviate $\mathcal A_j(G_*)$ to $\mathcal A_j$. Obviously, $\mathcal A_{j+1} \subseteq \mathcal A_j$. 
We show the strong centrality using the 3-subgroup lemma (Lemma \ref{lem3sgrp}).
Precisely, let $\alpha, \beta \geq 1$ be two integers. For all $i \geq 1$, the group $G_{i+\alpha+\beta}$ is normal in $G \rtimes \Aut(G_*)$ (it is normal in $G$ and $\Aut(G_*)$-stable). Lemma \ref{lem3sgrp} thus implies:
\[[[\mathcal A_\alpha,\mathcal A_\beta], G_i]\subseteq G_{i+\alpha+\beta}.\]
This says exactly that $[\mathcal A_\alpha,\mathcal A_\beta] \subseteq \mathcal A_{\alpha + \beta},$
which is the desired conclusion.
\end{proof}

\begin{rmq}
The group $\mathcal A_1(G_*)$ is the group of automorphisms of $G_1$ preserving $G_*$ and acting trivially on $\Lie(G_*)$.
\end{rmq}

\begin{ex}\label{déf_IA}
Let $G_* = \Gamma_*(G)$. Then $\Lie(G_*)$ is generated in degree one as a Lie algebra. As a consequence $\mathcal A_1(G_*)$ is the subgroup of automorphisms acting trivially on the abelianization $G^{ab} = \Lie_1(G)$, denoted by $IA_G$.
\end{ex}

In order to show that the filtration $\mathcal A_*(G_*)$ acts universally on $G_*$, we need to investigate actions in $\mathcal{SCF}$. 

\begin{prop}\label{actions_dans_SFC}
An action $K_* \circlearrowright G_*$ in $\mathcal{SFC}$ consists of a group action of $K = K_1$ on $G = G_1$ such that :
\[\forall i, j,\ [K_i, G_j] \subseteq G_{i+j}.\]
\end{prop}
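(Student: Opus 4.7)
The plan is to unpack the definition. An action $K_* \circlearrowright G_*$ in $\mathcal{SCF}$ is, by definition, a split extension $G_* \xrightarrow{\iota} Y_* \xrightarrow{\pi} K_*$ with a chosen section $s: K_* \to Y_*$. By Definition \ref{def_inj-surj}, $\iota$ being an injection gives $G_i = G \cap Y_i$, and $\pi$ being a surjection gives $K_i = \pi(Y_i)$; the section automatically satisfies $s(K_i) \subseteq Y_i$. At the group level $Y \cong G \rtimes K$ via $s$, so the datum is equivalent to an ordinary group action of $K = K_1$ on $G = G_1$ together with a compatible strongly central filtration $Y_*$ on $Y$.

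First I would show that $Y_*$ is in fact determined: writing $y \in Y_i$ uniquely as $g \cdot s(k)$, one has $k = \pi(y) \in K_i$, whence $s(k) \in Y_i$ and $g = y s(k)^{-1} \in G \cap Y_i = G_i$. Thus $Y_i = G_i \cdot s(K_i)$. For the necessary direction, since $G$ is normal in $Y$, one has $[s(K_i), G_j] \subseteq G$, and strong centrality of $Y_*$ gives $[s(K_i), G_j] \subseteq Y_{i+j}$, so $[s(K_i), G_j] \subseteq G \cap Y_{i+j} = G_{i+j}$. Rewriting $[s(k), g] = s(k)gs(k)^{-1}g^{-1}$ in terms of the $K$-action on $G$ translates this exactly as $[K_i, G_j] \subseteq G_{i+j}$.

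For the converse, given a group action of $K$ on $G$ satisfying $[K_i, G_j] \subseteq G_{i+j}$ for all $i, j$, I would set $Y := G \rtimes K$ and $Y_i := G_i \cdot s(K_i)$ inside $Y$. Since $[K_i, G_i] \subseteq G_{2i} \subseteq G_i$, the group $K_i$ preserves $G_i$, so each $Y_i$ is a subgroup. The commutator $[Y_i, Y_j]$ is then generated, via the identities of Proposition \ref{formules}, and up to conjugation by elements of $Y$, by the four bilinear pieces
\[ [G_i, G_j],\quad [G_i, s(K_j)],\quad [s(K_i), G_j],\quad [s(K_i), s(K_j)], \]
each of which lies in $Y_{i+j} = G_{i+j} \cdot s(K_{i+j})$ by the three given compatibility conditions. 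The remaining split-extension axioms in $\mathcal{SCF}$ are then routine.

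The main obstacle will be the commutator bookkeeping in the last step: it must be carried out without circularly invoking the strong centrality we are trying to prove. The key observation making it work is that $G_{i+j}$ is already $Y$-normal for elementary reasons -- conjugation by $G$ preserves it since $G_*$ is itself strongly central, and conjugation by $s(K)$ preserves it because $[K_1, G_{i+j}] \subseteq G_{i+j+1} \subseteq G_{i+j}$ by hypothesis -- so all conjugates arising in the expansion of $[Y_i, Y_j]$ stay inside $Y_{i+j}$.
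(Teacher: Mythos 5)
Your proposal is correct and follows essentially the same route as the paper: the paper likewise reduces the split extension level-wise to $H_i = G_i \rtimes K_i$ (via the forgetful functors $\omega_i$ preserving kernels, where you argue element-wise from the injection/surjection characterization) and then proves the equivalence with the commutator condition by the same expansion of $[kg,k'g']$, packaged there as Lemma \ref{sdprod}. The normality point you flag at the end is exactly the observation that makes the paper's computation close up as well.
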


\begin{proof}
Let an action of $K_*$ on $G_*$ be given:
\[\begin{tikzcd} G_* \ar[r, hook] & H_* \ar[r, two heads] & K_*. \ar[l, bend right]
\end{tikzcd}\]
The forgetful functor $\omega_1: G_* \mapsto G_1$ from $\mathcal{SCF}$ to groups has a left adjoint $G \mapsto \Gamma_*G$. Hence, it commutes with kernels. Applying this functor to the given action, we get a split extension of groups:
\[\begin{tikzcd} G_1 \ar[r, hook] & H_1 \ar[r, two heads] & K_1. \ar[l, bend right]
\end{tikzcd}\]
The group $H_1$ thus decomposes as a semi-direct product $G_1 \rtimes K_1$ of $G_1$ by $K_1$. 

In fact, there are other forgetful functors $\omega_i: G_* \mapsto G_i$, each one with a left adjoint $G \mapsto \Gamma_{\lceil \frac{*}{i}\rceil}G$ (where $\lceil - \rceil$ is the usual ceiling function). We then get split extensions of groups:
\[\begin{tikzcd} G_i \ar[r, hook] & H_i \ar[r, two heads] & K_i. \ar[l, bend right]
\end{tikzcd}\]
The groups $H_i$ thus decomposes as semi-direct products $G_i \rtimes K_i$ of $G_i$ by $K_i$. 

As $H_*$ is a strongly central filtration, we can apply Lemma \ref{sdprod} below to get the desired relation. Conversely, let a group action be given as in the statement of the proposition. Using the same lemma, we see that $H_* = G_* \rtimes K_*$ is a strongly central filtration on $H = K \rtimes G$, and the corresponding split sequence in $\mathcal{SFC}$ is exact.
\end{proof}

\begin{lem}\label{sdprod}
Let $K \circlearrowright G$ be an action in $\mathcal Grps$, encoded in a semi-direct product structure $H = G \rtimes K$.
Let $G_*$ and $K_*$ be given filtrations on $G = G_1$ and $K = K_1$ respectively. Then the $H_i:= G_i \rtimes K_i$ are subgroups of $H$ defining a strongly central filtration of $H$ if and only if:
\[\left\{
\begin{array}{l}
K_* \text{ is a strongly central series on } K, \\
G_* \text{ is a strongly central series on } G, \\
\forall i,j,\ [K_j,G_i] \subseteq G_{i+j}.
\end{array}
\right.\]
\end{lem}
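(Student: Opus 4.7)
The plan is to handle the two directions of the equivalence separately. The reverse implication is a straightforward intersection argument, while the forward implication reduces to a direct commutator computation in the semi-direct product $H = G \rtimes K$ and some careful bookkeeping of conjugations.

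For the \emph{only if} direction, assume $(H_i)_i$ is a strongly central filtration of $H$. Viewing $G$ and $K$ as subgroups of $H$ via the natural embeddings, the semi-direct product decomposition gives $G \cap K = \{1\}$, hence $H_i \cap G = G_i$ and $H_i \cap K = K_i$. Then
$[G_i, G_j] \subseteq [H_i, H_j] \cap G \subseteq H_{i+j} \cap G = G_{i+j}$,
and analogously $[K_i, K_j] \subseteq K_{i+j}$. For the mixed relation, since $G$ is normal in $H$ we have $[K_j, G_i] \subseteq G$; combining this with $[K_j, G_i] \subseteq [H_j, H_i] \subseteq H_{i+j}$ yields $[K_j, G_i] \subseteq G \cap H_{i+j} = G_{i+j}$.

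For the \emph{if} direction, assume the three conditions. I would first check that each $H_i$ is a subgroup of $H$: this reduces to verifying that $K_i$ normalizes $G_i$, which follows from the hypothesis $[K_i, G_i] \subseteq G_{2i} \subseteq G_i$. Once this is settled, an arbitrary element of $H_i$ has the form $gk$ with $g \in G_i$ and $k \in K_i$, and one computes a typical commutator $[g_1 k_1, g_2 k_2]$ (with $g_1 \in G_i$, $k_1 \in K_i$, $g_2 \in G_j$, $k_2 \in K_j$) by iterated application of the identities in Proposition \ref{formules} (together with the immediate consequence $[ab, c] = {}^a[b, c] \cdot [a, c]$). This expresses the commutator as a product of conjugates of the four ``basic'' pieces $[g_1, g_2] \in [G_i, G_j]$, $[g_1, k_2] \in [G_i, K_j]$, $[k_1, g_2] \in [K_i, G_j]$, and $[k_1, k_2] \in [K_i, K_j]$, each of which lies in $G_{i+j}$ or $K_{i+j}$, hence in $H_{i+j}$.

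The main bookkeeping obstacle is to show that the conjugations appearing in this expansion do not take us out of $H_{i+j}$. Conjugation by elements of $G$ preserves $G_{i+j}$, since the latter is normal in $G$ as a term of a strongly central series. For a term of the form ${}^g x$ with $x \in K_{i+j}$ and $g \in G_j$ (the only delicate case), I would use the identity ${}^g x = x \cdot [x^{-1}, g]$; by the mixed hypothesis, $[x^{-1}, g] \in [K_{i+j}, G_j] \subseteq G_{i+2j} \subseteq G_{i+j}$, so ${}^g x \in K_{i+j} \cdot G_{i+j} \subseteq H_{i+j}$. Once this technicality is absorbed, the whole expansion lies in $H_{i+j}$, completing the proof.
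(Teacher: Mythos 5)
Your proof is correct and follows essentially the same route as the paper's: recover the three conditions by intersecting $[H_i,H_j]\subseteq H_{i+j}$ with $G$ and $K$ for the forward direction, and expand the commutator of two generic elements of $H_i$ and $H_j$ into conjugates of the four basic commutators for the converse. The only cosmetic difference is that you write elements as $gk$ rather than $kg$, which forces you to handle conjugation of $K_{i+j}$ by $G$-elements (via ${}^{g}x = x\cdot[x^{-1},g]$) where the paper instead conjugates elements of $G_{i+j}$ by $K$-elements; both versions are handled correctly.
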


\begin{proof}
Suppose first that $(G_i \rtimes K_i)_i$ is strongly central. Then its projection $K_*$ on $K$ also is. So is its intersection $G_*$ with $G$. Hence, the conclusion follows from:
\[[K_j,G_i] \subseteq (G_{i+j} \rtimes K_{i+j}) \cap G = G_{i+j}.\]

Conversely, under the hypothesis listed above, $G_i$ is stable under the action of $K_i$, so the $H_i = G_i \rtimes K_i$ are subgroups of $H$. We then use the formulas \ref{formules} to compute $[kg, k'g']$ with $k$, $g$, $k'$, $g'$ in $K_i$, $G_i$, $K_j$ and $G_j$ respectively:
\begin{align*}
[kg, k'g'] &= [kg, k'] \cdot ^{g'}\! [kg, g'] \\
&=\ ^k\![g,k'] \cdot [k,k'] \cdot\ ^{k'k}\! [g,g'] \cdot\ ^{k'}[k,g'] \\
&\in G_{i+j} \cdot K_{i+j} \cdot G_{i+j} \cdot G_{i+j} = G_{i+j} \rtimes K_{i+j}.
\end{align*}
This completes the proof.
\end{proof}

We are now ready to show the result announced in Proposition \ref{SCF_action-rep}:

\begin{prop}\label{Universal_action}
Let $G_*$ be a strongly central series. The strongly central series $\mathcal A_*(G_*)$ acts canonically  on $G_*$, and this action is universal.
\end{prop}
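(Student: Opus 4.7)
The plan is to deduce the proposition in two steps from Proposition \ref{actions_dans_SFC}, which identifies actions in $\mathcal{SCF}$ with group actions that are compatible with the filtrations in a specific commutator sense.

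First, I would construct the canonical action of $\mathcal A_*(G_*)$ on $G_*$. Since $\mathcal A_1(G_*)$ is a subgroup of $\Aut(G_*)$, it acts on $G = G_1$ while preserving the filtration $G_*$. To promote this group action to an action in $\mathcal{SCF}$, Proposition \ref{actions_dans_SFC} requires $[\mathcal A_i(G_*), G_j] \subseteq G_{i+j}$ for all $i, j \geq 1$. But this inclusion is exactly the reformulation \eqref{def_A_*} of the defining property of $\mathcal A_i(G_*)$, so this step is essentially a re-reading of the definition.

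Next, I would establish universality. Given an arbitrary action $K_* \circlearrowright G_*$, Proposition \ref{actions_dans_SFC} yields a group morphism $\varphi : K_1 \to \Aut(G_1)$ satisfying $[\varphi(K_i), G_j] \subseteq G_{i+j}$ for every $i, j \geq 1$. Specializing $i = 1$ (with $j$ arbitrary) shows that $\varphi$ actually takes values in $\Aut(G_*)$; the general inequality then reads exactly as $\varphi(K_i) \subseteq \mathcal A_i(G_*)$, by \eqref{def_A_*} once again. This produces the desired classifying morphism $K_* \to \mathcal A_*(G_*)$ in $\mathcal{SCF}$, whose underlying group morphism on $K_1$ is $\varphi$.

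To conclude, I would verify that restricting (\emph{i.e.}, pulling back) the universal action along this classifying morphism recovers the original action, and that the classifying morphism is unique. The former is automatic, since both split extensions involved have the same underlying group action $\varphi$, and Lemma \ref{sdprod} ensures that the filtration on the middle term of a split extension is determined by the filtrations on the outer terms together with $\varphi$. Uniqueness is forced by the requirement that the underlying group morphism on $K_1$ reproduce $\varphi$. I do not anticipate any serious obstacle here: once Proposition \ref{actions_dans_SFC} is available, the whole proof amounts to matching the defining condition of $\mathcal A_*(G_*)$ with the commutator compatibility that characterizes actions in $\mathcal{SCF}$, together with the standard fact that a group action is the same as a morphism to the automorphism group.
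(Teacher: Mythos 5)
Your proof is correct and follows essentially the same route as the paper: both deduce the existence of the canonical action from the reformulation \eqref{def_A_*} of the definition of $\mathcal A_*(G_*)$ together with Theorem \ref{Kaloujnine} and Proposition \ref{actions_dans_SFC}, and both obtain universality by reading the commutator condition of Proposition \ref{actions_dans_SFC} as the statement that the classifying group morphism $K_1 \to \Aut(G_1)$ sends $K_j$ into $\mathcal A_j(G_*)$. Your additional verification that the pullback along the classifying morphism recovers the original action (via Lemma \ref{sdprod}) is a slightly more explicit version of the paper's final sentence.
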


\begin{proof}
That $\mathcal A_*(G_*)$ acts on $G_*$ follows from the formula \eqref{def_A_*}, Theorem \ref{Kaloujnine} and Proposition \ref{actions_dans_SFC}. 

Given an action of a strongly central series $K_*$ on $G_*$, the underlying group action is described by a unique morphism from $K_1$ to $\Aut(G_1)$. From Proposition \ref{actions_dans_SFC}, we deduce that this morphism sends $K_j$ into $\mathcal A_j(G_*)$. Conversely, any morphism from $K_*$ to $\mathcal A_*(G_*)$ in $\mathcal{SCF}$ gives a group action lifting to an action in $\mathcal{SCF}$ by Proposition \ref{actions_dans_SFC}.
\end{proof}

\begin{rmq}\label{A_on_K}
If a group $K$ acts on a group $G$, and $G_*$ is a strongly central filtration on $G = G_1$, we can pull back the canonical filtration $\mathcal A_*(G_*)$ by the associated morphism:
\[K \longrightarrow \Aut(G).\]
This gives a strongly central filtration $\mathcal A_*(K,G_*)$, maximal amongst strongly central filtrations on subgroups of $K$ which act on $G_*$ \emph{via} the given action $K \circlearrowright G$. It can be described explicitly as:
\[\mathcal A_j(K,G_*) = \Enstq{k \in K}{\forall i \geq 1,\ [k, G_i] \subseteq G_{i+j}}\ \subseteq K.\]
\end{rmq}

\subsection{Johnson's morphisms}\label{Johnson_section}

The construction of the Johnson morphism associated with an action in $\mathcal{SCF}$ relies on the following:
\begin{prop}\label{exactness_of_L}
The Lie functor $\Lie: \mathcal{SCF} \longrightarrow \mathcal Lie_{\Z}$ is \emph{exact}, \emph{i.e.\ }it preserves short exact sequences \cite[Def 4.1.5]{BB}.
\end{prop}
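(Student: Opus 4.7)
The plan is to unwind what a short exact sequence in $\mathcal{SCF}$ actually looks like, and then verify degree by degree that applying $\Lie$ yields a short exact sequence of abelian groups (since the bracket on $\Lie$ is induced by the commutator, compatibility with the bracket is automatic from the functoriality of $\Lie$).

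First I would recall that in the homological category $\mathcal{SCF}$, a short exact sequence is a diagram $G_* \hookrightarrow H_* \twoheadrightarrow K_*$ in which the first arrow is a regular mono (an injection) and the second is a regular epi (a surjection), with the first being the kernel of the second. Using Definition \ref{def_inj-surj} together with the explicit description of kernels as intersections given in the proof of Proposition \ref{SCF_cocomplete}, this unfolds into three concrete conditions: the underlying $G_1 \to H_1 \to K_1$ is a short exact sequence of groups; $G_i = G_1 \cap H_i$ for every $i \geq 1$; and $v(H_i) = K_i$ for every $i \geq 1$, where $v: H_* \to K_*$ denotes the surjection.

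Next I would fix $i \geq 1$ and check that
\[
0 \longrightarrow G_i/G_{i+1} \longrightarrow H_i/H_{i+1} \longrightarrow K_i/K_{i+1} \longrightarrow 0
\]
is short exact. Surjectivity on the right is immediate from $v(H_i) = K_i$ and $v(H_{i+1}) = K_{i+1}$. Injectivity on the left uses the condition $G_{i+1} = G_1 \cap H_{i+1}$: if $g \in G_i$ lies in $H_{i+1}$, then, being already in $G_1$, it lies in $G_1 \cap H_{i+1} = G_{i+1}$. For exactness in the middle, given $h \in H_i$ with $v(h) \in K_{i+1}$, surjectivity provides $h' \in H_{i+1}$ with $v(h') = v(h)$, and then $h (h')^{-1}$ lies in $\ker(v) \cap H_i = G_1 \cap H_i = G_i$ and represents the same class as $h$ modulo $H_{i+1}$. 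Assembling these isomorphisms in each degree and noting that inclusions and quotient maps commute with the commutator bracket yields the desired short exact sequence in $\mathcal Lie_\Z$.

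There is no serious obstacle: the content of the statement is really the correctness of the notions of injection, surjection, and kernel in $\mathcal{SCF}$ (i.e.\ that $\mathcal{SCF}$ is indeed homological with the descriptions given in Proposition \ref{SCF_cocomplete} and Definition \ref{def_inj-surj}). The only subtle point to keep in mind is that both filtration conditions $G_i = G_1 \cap H_i$ and $v(H_i) = K_i$ are needed simultaneously — the first to rule out phantom elements in the kernel in degree $i$, the second to lift classes in $K_i$ up to $H_i$. Once these are granted, the argument is a routine diagram-chase, which also explains why this statement in $\mathcal{SCF}$ has no analogue in the category $\mathcal Grps$, where the lower central series functor is famously not exact.
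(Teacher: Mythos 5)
Your proof is correct and in substance matches the paper's: both reduce to the explicit description of kernels and surjections in $\mathcal{SCF}$ (equivalently, the exactness of the forgetful functors $\omega_i: G_* \mapsto G_i$) and then verify that each sequence $G_i/G_{i+1} \to H_i/H_{i+1} \to K_i/K_{i+1}$ is exact. The only difference is cosmetic: where the paper invokes the nine lemma applied to $\omega_{i+1}(\mathcal E) \hookrightarrow \omega_i(\mathcal E) \twoheadrightarrow \Lie_i(\mathcal E)$, you carry out the corresponding diagram chase by hand.
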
 
\begin{proof}
The exactness of $\Lie$ is equivalent to the exactness of each $\Lie_i: \mathcal{SCF} \longrightarrow Ab$. Consider the forgetful functors $\omega_i: G_* \mapsto G_i$ from $\mathcal{SCF}$ to $\mathcal Grps$. Each $\omega_i$ has a left adjoint $G \mapsto \Gamma_{\lceil \frac{*}{i}\rceil}G$, so these functors preserve kernels. Moreover, they preserve regular epimorphisms, which are surjections in $\mathcal{SCF}$ (cf. Definition \ref{def_inj-surj}). Hence, they are exact. Since $\Lie_i$ is the cokernel of the injection $\omega_{i+1} \hookrightarrow \omega_i$, its exactness follows from the nine lemma in $\mathcal Grpes$. Precisely, if $\mathcal E$ is any short exact sequence in $\mathcal{SCF}$, apply the nine lemma to the diagram
$\omega_{i+1}(\mathcal E) \hookrightarrow \omega_i(\mathcal E) \twoheadrightarrow \Lie_i(\mathcal E)$
to get the exactness of $\Lie_i(\mathcal E)$ from that of $\omega_i(\mathcal E)$ and $\omega_{i+1}(\mathcal E)$.
\end{proof}

As a consequence of Proposition \ref{exactness_of_L}, the functor $\Lie$ preserves actions. Precisely, from an action in $\mathcal{SCF}$:
\[\begin{tikzcd} G_* \ar[r, hook] & H_* \ar[r, two heads] & K_*, \ar[l, bend right]
\end{tikzcd}\]
we get an action in the category of graded Lie rings:
\[\begin{tikzcd} \Lie(G_*) \ar[r, hook] & \Lie(H_*) \ar[r, two heads] & \Lie(K_*). \ar[l, bend right]
\end{tikzcd}\]
Such an action is given by a morphism of graded Lie rings:
\begin{equation}\label{Johnson}
\Lie(K_*) \longrightarrow \Der_*(\Lie(G_*)).
\end{equation}
The target is the (graded) Lie algebra of graded derivations: a derivation is of degree $k$ when it raises degrees of homogeneous elements by $k$.
\begin{defi}
The morphism \eqref{Johnson} is called the \emph{Johnson morphism} associated to the given action $K_* \circlearrowright G_*$.
\end{defi}
We can give an explicit description of this morphism: for $k \in K$, the derivation associated to $\bar{k}$ is induced by $[\bar{k},-]$ inside $\Lie(G_* \rtimes K_*) = \Lie(G_*) \rtimes \Lie(K_*)$, so it is induced by $[k,-]$ inside $G_* \rtimes K_*$.

\begin{ex}\label{déf_tau}
The Johnson morphism associated to the universal action $\mathcal A_*(G_*)\circlearrowright G_*$ is the Lie morphism:
\[\tau: \Lie(\mathcal A_*(G_*)) \longrightarrow \Der_*(\Lie(G_*))\]
induced by $\sigma \mapsto (x \mapsto \sigma(x)x^{-1})$.
\end{ex}

If $K_1$ is in fact a normal subgroup of a group $K_0$, such that each $K_i$ is normal in $K_0$, and such that the action of $K_1$ on $G_*$ can be extended to an action of $K_0$, then $K_0$ acts on $\Lie (G_* \rtimes K_*)$ and this action factorizes through $K_0/K_1$. Moreover, as this action is by automorphisms of the Lie ring:
\begin{lem}\label{Johnson_equiv}
Let $K_0 \triangleright K_1$ be given as above. Then the Johnson morphism $\tau$ is $K_0/K_1$-equivariant.
\end{lem}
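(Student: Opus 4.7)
The plan is to exploit the fact, observed in the paragraph preceding the statement, that $K_0$ acts on the graded Lie ring $\Lie(G_*\rtimes K_*) = \Lie(G_*)\rtimes\Lie(K_*)$ by graded Lie ring automorphisms factoring through $K_0/K_1$. Because the Johnson morphism $\tau$ is defined purely from the bracket in this ambient Lie ring, equivariance should follow almost formally from this observation.

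First, I would unpack which $K_0/K_1$-actions are at play on each side of the Johnson morphism. The $K_0$-action on $H := G\rtimes K_1$ is assembled from the extended action on $G$ and the conjugation action on $K_1$; the latter preserves the filtration $K_*$ because each $K_i$ is assumed normal in $K_0$. Both pieces preserve the semi-direct product decomposition of $H_* = G_*\rtimes K_*$, so the induced action on $\Lie(H_*) = \Lie(G_*)\rtimes\Lie(K_*)$ preserves each summand, inducing separate actions on $\Lie(G_*)$ (whose conjugation action on $\Der_*(\Lie(G_*))$ is the target action in the equivariance statement) and on $\Lie(K_*)$ (the source action). The factorisation through $K_0/K_1$ is essentially the argument already used in the proof of Theorem \ref{Kaloujnine}: the restriction of the $K_0$-action to $K_1$ coincides with inner conjugation by $K_1\subseteq H_1$ inside $H$, and inner conjugation by any element of $H_1$ shifts degrees by one and therefore acts trivially on $\Lie(H_*)$.

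Once this setup is in place, recalling from the description following \eqref{Johnson} that $\tau(\bar k)$ is the derivation $\bar g \mapsto [\bar k,\bar g]$, with the bracket taken inside $\Lie(H_*)$, the equivariance reduces to a one-line calculation. For $k_0 \in K_0$, $\xi \in \Lie(K_*)$ and $\eta\in\Lie(G_*)$, using that $k_0$ acts as a Lie ring automorphism preserving the two factors:
\[\bigl(k_0\cdot\tau(\xi)\cdot k_0^{-1}\bigr)(\eta) \;=\; k_0\cdot[\xi,\,k_0^{-1}\cdot\eta] \;=\; [k_0\cdot\xi,\,\eta] \;=\; \tau(k_0\cdot\xi)(\eta).\]
The only real content is the preservation-of-factors point and the triviality of inner conjugation on $\Lie(H_*)$; once these are established, the equivariance is purely formal, and the hard part is bookkeeping rather than any genuine difficulty.
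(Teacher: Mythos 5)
Your proof is correct and takes essentially the same approach as the paper, whose entire argument is the single computation $\tau(k\cdot x) = [k\cdot x,-] = k\cdot[x,k^{-1}\cdot -] = k\circ\tau(x)\circ k^{-1}$, relying on $k$ acting by Lie ring automorphisms. The additional bookkeeping you supply --- that the $K_0$-action preserves the decomposition $\Lie(G_*)\rtimes\Lie(K_*)$ and that inner conjugation by elements of $H_1$ raises degree and hence acts trivially on $\Lie(H_*)$, giving the factorisation through $K_0/K_1$ --- merely makes explicit what the paper asserts without proof in the sentence preceding the lemma.
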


The action of $K_0$ on derivations is by conjugation. Precisely:
\[\tau(k\cdot x) = [k\cdot x,-] = k\cdot [x,k^{-1}\cdot -] = k \circ \tau(x) \circ k^{-1}.\]

\begin{lem}\label{Johnson_inj}
Let $K_* \circlearrowright H_*$ be an action in $\mathcal{SCF}$. The associated Johnson morphism $\tau: \Lie (K_*) \longrightarrow \Der(\Lie (G_*))$ is injective if and only if $K_*$ is the filtration $\mathcal A_*(K, G_*)$ defined in Remark \ref{A_on_K}.
\end{lem}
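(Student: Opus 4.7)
The plan is to compute the kernel of $\tau$ explicitly in each degree, and then check that the resulting condition is equivalent to the filtration-level equality $K_* = \mathcal A_*(K,G_*)$ by a short induction.

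First I would unpack the Johnson morphism degree by degree. For $\bar k \in \Lie_j(K_*) = K_j/K_{j+1}$, the derivation $\tau(\bar k)$ has degree $j$ and, by the explicit description recalled after \eqref{Johnson}, sends $\bar g \in G_i/G_{i+1}$ to $\overline{[k,g]} \in G_{i+j}/G_{i+j+1}$. Therefore $\tau(\bar k) = 0$ if and only if $[k,G_i] \subseteq G_{i+j+1}$ for every $i \geq 1$, which is exactly the condition $k \in \mathcal A_{j+1}(K,G_*)$ from Remark \ref{A_on_K}. This yields the key identity
\[\ker(\tau_j) \;=\; \bigl(K_j \cap \mathcal A_{j+1}(K,G_*)\bigr)\big/K_{j+1},\]
so $\tau$ is injective if and only if $K_j \cap \mathcal A_{j+1}(K,G_*) = K_{j+1}$ for all $j \geq 1$.

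Next I would derive the two implications. For the ``if'' direction, suppose $K_* = \mathcal A_*(K,G_*)$. Then $K_j \cap \mathcal A_{j+1} = \mathcal A_j \cap \mathcal A_{j+1} = \mathcal A_{j+1} = K_{j+1}$, so $\tau$ is injective. For the ``only if'' direction, I would argue by induction that $K_j = \mathcal A_j(K,G_*)$ for all $j \geq 1$. The base case $j=1$ is immediate: $K_1 = K$, and the hypothesis that $K_*$ acts on $G_*$ (Proposition \ref{actions_dans_SFC}) gives $K \subseteq \mathcal A_1(K,G_*)$, while the reverse inclusion is part of the definition of $\mathcal A_1(K,G_*)$ as a subgroup of $K$. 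For the inductive step, assume $K_j = \mathcal A_j(K,G_*)$. Since $\mathcal A_{j+1} \subseteq \mathcal A_j = K_j$, we can apply the injectivity-criterion identity: $\mathcal A_{j+1} \subseteq K_j \cap \mathcal A_{j+1} = K_{j+1}$. The reverse inclusion $K_{j+1} \subseteq \mathcal A_{j+1}(K,G_*)$ again follows from Proposition \ref{actions_dans_SFC}, closing the induction.

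There is no real obstacle here: once the kernel of $\tau$ in each degree is identified with $(K_j \cap \mathcal A_{j+1})/K_{j+1}$, both implications are essentially formal. The only subtle point to keep track of is that the inclusion $K_* \subseteq \mathcal A_*(K,G_*)$ always holds (a reformulation of the fact that $K_*$ acts on $G_*$), so the content of the statement is really about the reverse inclusion, which is recovered step by step from the vanishing of the kernel of $\tau$ in each degree.
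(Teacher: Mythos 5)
Your proof is correct and follows essentially the same route as the paper's: both rest on identifying $\ker(\tau_j)$ with $\bigl(K_j \cap \mathcal A_{j+1}(K,G_*)\bigr)/K_{j+1}$. Your explicit induction for the ``only if'' direction is just a more careful packaging of the paper's observation that any $\sigma \in \mathcal A_j(K,G_*) \setminus K_j$ yields a non-trivial class in $\ker(\tau)$.
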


\begin{proof}
Every non-trivial element in $\ker(\tau_j)$ lifts to an element in $\mathcal A_j(K, G_*) - K_j$.
Conversely, an element $\sigma$ in $\mathcal A_j(K, G_*) - K_j$ is in $K_k - K_{k+1}$ for some $k < j$. Then $\overline{\sigma} \in  K_k /K_{k+1} - \{0\}$ is a non-trivial element in $\ker(\tau)$.
\end{proof}

\begin{rmq}
The definition of $\mathcal A_j(K, G_*)$ makes sense for $j=0$, giving a subgroup $K_0 = \mathcal A_0(K, G_*)$ of $K$ acting as above. The morphism $\tau$ is then $\mathcal A_0(K, G_*)$-equivariant. In fact, $\tau$ can then be extended to a morphism of \emph{extended Lie algebras}, in the sense of \cite{Massuyeau1}. Ideed, their construction of an algebra of \emph{extended derivations} is exactly a construction of universal actions in the category of extended Lie algebras, and their version of the Johnson morphism is exactly the one we find if we replace $N$-series and Lie algebras by their extended version in the constructions above.
\end{rmq}

\subsection{The Andreadakis problem}

Let $G$ be a group. To study the structure of $\Aut(G)$, we can consider first how automorphisms act on $G^{ab}$. Then we can put aside this linear part by considering  the kernel of the projection from $\Aut(G)$ to $GL(G^{ab})$. This kernel $IA_G$ is (residually) nilpotent when $G$ is, and in endowed with two strongly central filtrations: its lower central series, and the \emph{Andreadakis filtration} $\mathcal A_*(G)$. We are thus led to the problem of comparing these filtrations, which we call the \emph{Andreadakis problem} (Problem \ref{pb_Andreadakis}).

\medskip

Recall from Theorem \ref{Kaloujnine} and Example \ref{déf_IA} that $\mathcal A_*(G)$ is a strongly central filtration on
$\mathcal A_1(G) = IA_G$.
Since $\mathcal A_1(G)$ is the kernel of the canonical action of $\Aut(G)$ on $\Lie (G)$, we get an induced faithful action of $\Aut(G)/IA_G$ ($\leq GL(G^{ab})$) on $\Lie (G)$.
The next lemma gives a similar concrete description of all the $\mathcal A_j(G)$: 
\begin{lem}\label{Explicit_A_*}
The group $\mathcal A_j(G)$ is the subgroup of automorphisms acting trivially on $G/\Gamma_{j+1}(G)$, \emph{i.e.}
$\mathcal A_j(G) = \Enstq{\sigma \in \Aut(G)}{[\sigma, G] \subseteq \Gamma_{j+1}(G)}.$
\end{lem}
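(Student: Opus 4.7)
The plan is to show the two inclusions separately, with only the nontrivial one requiring work.

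One inclusion is immediate: by formula \eqref{def_A_*} applied to $G_* = \Gamma_*(G)$, any $\sigma \in \mathcal A_j(G)$ satisfies $[\sigma, \Gamma_i(G)] \subseteq \Gamma_{i+j}(G)$ for every $i \geq 1$, and specializing to $i=1$ gives exactly $[\sigma, G] \subseteq \Gamma_{j+1}(G)$.

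For the reverse inclusion, I would assume $\sigma \in \Aut(G)$ satisfies $[\sigma, G] \subseteq \Gamma_{j+1}(G)$ and prove by induction on $i \geq 1$ that $[\sigma, \Gamma_i(G)] \subseteq \Gamma_{i+j}(G)$; the case $i=1$ is the hypothesis. All computations take place inside the holomorph $G \rtimes \Aut(G)$, where $\Gamma_{k}(G)$ is normal for every $k$ (being characteristic in $G$). For the inductive step, write $\Gamma_{i+1}(G) = [G, \Gamma_i(G)]$ and apply the $3$-subgroup lemma (Lemma \ref{lem3sgrp}) to $A = \{\sigma\}$ (or rather the subgroup it generates together with $\Aut(G)$-action), $B = G$, $C = \Gamma_i(G)$: the subgroup $[\sigma, [G, \Gamma_i(G)]]$ is contained in the normal closure of $[G, [\sigma, \Gamma_i(G)]]$ and $[\Gamma_i(G), [\sigma, G]]$.

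The first of these two terms lies in $[G, \Gamma_{i+j}(G)] \subseteq \Gamma_{i+j+1}(G)$ by the induction hypothesis and strong centrality of $\Gamma_*$; the second lies in $[\Gamma_i(G), \Gamma_{j+1}(G)] \subseteq \Gamma_{i+j+1}(G)$ by the base hypothesis and strong centrality. Since $\Gamma_{i+j+1}(G)$ is normal in $G \rtimes \Aut(G)$, its normal closure coincides with itself, and we conclude $[\sigma, \Gamma_{i+1}(G)] \subseteq \Gamma_{i+j+1}(G)$, closing the induction.

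The only subtle point is to make sure the $3$-subgroup lemma is applied in a group where $\Gamma_{i+j+1}(G)$ is normal; this is precisely why we work inside the holomorph and use that lower central terms are characteristic subgroups. Everything else is a routine induction, and no other obstacle is expected.
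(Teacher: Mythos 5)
Your proof is correct, but it takes a genuinely different route from the paper's. The paper inducts on $j$: assuming the lemma for $j-1$, an automorphism $\sigma$ with $[\sigma,G]\subseteq\Gamma_{j+1}(G)$ already lies in $\mathcal A_{j-1}(G)$, hence induces \emph{via} the Johnson morphism a degree-$(j-1)$ derivation $[\sigma,-]$ of $\Lie(G)$ that vanishes on $\Lie_1(G)$; since $\Lie(G)$ is generated in degree one (Proposition \ref{engdeg1}), the derivation vanishes identically, which is exactly $[\sigma,\Gamma_i]\subseteq\Gamma_{i+j}$ for all $i$. You instead fix $j$ and induct on $i$, running the three-subgroup lemma inside the holomorph --- essentially redoing, for the pair $(\langle\sigma\rangle, G)$, the commutator computation that proves Kaloujnine's theorem \ref{Kaloujnine}. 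Both arguments are sound: yours is more elementary (pure commutator calculus, no passage to the associated graded), while the paper's reuses machinery already in place and makes transparent that the real content is ``a derivation is determined by its values on degree-one generators.'' The one point you should make fully explicit is that Lemma \ref{lem3sgrp} applies to subgroups, so $\{\sigma\}$ must be replaced by $\langle\sigma\rangle$, and you need $[\langle\sigma\rangle,G]\subseteq\Gamma_{j+1}$ and $[\langle\sigma\rangle,\Gamma_i]\subseteq\Gamma_{i+j}$ from the corresponding statements for $\sigma$ alone; this holds because $\Gamma_{i+j}\subseteq\Gamma_i$ are both normal in the holomorph, so the set of elements $h$ with $[h,\Gamma_i]\subseteq\Gamma_{i+j}$ is the preimage of a centralizer and hence a subgroup. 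You flag this parenthetically and it is routine, so it is not a gap --- just a line worth writing out.
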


\begin{proof}
Let us denote by $K_j$ the right-hand side of the equality. We only need to show the inclusion $K_j \subseteq \mathcal A_j(G)$. The case $j=1$ is given in Example \ref{déf_IA}. Suppose it true for $j-1$, and let $\sigma \in K_j$. In particular, $\sigma$ is inside $K_{j-1} = \mathcal A_{j-1}(G)$, so it induces a derivation $[\sigma, -]$ of degree $j-1$ of $\Lie(G)$, \emph{via} the Johnson morphism. By definition, $[\sigma, G] \subseteq \Gamma_{j+1}$. This says exactly that the derivation $[\sigma, -]$ is trivial on $\Lie_1(G)$. Since $\Lie(G)$ is generated in degree one (Proposition \ref{engdeg1}), $[\sigma, -]$ has to be trivial on all of $\Lie(G)$: for all $i$, $[\sigma, \Gamma_i] \subseteq \Gamma_{i+j}$, which is the desired conclusion.
\end{proof}

The filtration $\mathcal A_*(G)$ is strongly central on $\mathcal A_1(G) = IA_G$. As a consequence, $\mathcal A_k(G)$ contains $\Gamma_k(IA_G)$. 
A first consequence of this inclusion is that the (residual) nilpotency of $G$ implies the (residual) nilpotency of $IA_G$. Precisely, let $G$ be a $c$-nilpotent group. Then $\mathcal A_c(G) = \{\Id{G}\}$. Accordingly, $\Gamma_c(IA_G) = \{\Id{G}\}$, so $IA_G$ is $(c-1)$-nilpotent. In a similar fashion, one can check that $IA_G$ has to be residually nilpotent when $G$ is.

The following question is crucial for trying to understand the structure of automorphism groups of residually nilpotent groups, in particular for trying to understand the structure of $\Aut(F_n)$:

\begin{reppb}{pb_Andreadakis}[Andreadakis]
What is the difference between $\mathcal A_*(G)$ and $\Gamma_*(IA_G)$ ? 
\end{reppb}

\begin{ex}
Consider the alternating group $A_n$. When $n \neq 2, 6$, $\Aut(A_n) =\Sigma_n$ (acting by conjugation), as is easily deduced from \cite[cor. 7.5]{Rotman}. But if $n \geq 5$, then $A_n$ is perfect: $[A_n,A_n] = A_n$. On the one hand, the Andreadakis filtration is thus constant equal to $IA(A_n) = \Aut(A_n) = \Sigma_n$. On the other hand, the lower central series of $\Sigma_n$ is  $\Sigma_n, A_n, A_n,...$, so the two filtrations differ in this case.
\end{ex}
Recall from the Introduction that we are interested in a stable form of the problem for $G=F_n$:
\begin{reppb}{pb_Andreadakis_stable}[Andreadakis - stable version]
What is the difference between $\mathcal A_k(F_n)$ and $\Gamma_k(IA_n)$ for $n \gg k$ ?
\end{reppb}

The \emph{Johnson morphism} turns out to be a powerful tool in the study of the Andreadakis filtration (which is the analogous of \emph{Johnson's filtration} on the mapping class group, defined by Johnson in \cite{Johnson}).

Recall the $\mathcal A_*(G)$ acts on $\Gamma_*(G)$ (see Proposition \ref{Universal_action}). The associated Johnson morphism $\tau$ is described in Example \ref{déf_tau}.
The $\Gamma_i(G)$ are characteristic subgroups of $G$, so $\mathcal A_0(G) = \Aut(G)$. Lemmas \ref{Johnson_equiv} and \ref{Johnson_inj} then give:	        
\begin{lem}\label{tau_inj}
The morphism $\tau$ is injective and $\Aut(G)/IA_G$-equivariant.
\end{lem}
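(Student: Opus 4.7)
The statement is a direct application of the two general results about Johnson morphisms proved just above, namely Lemmas \ref{Johnson_equiv} and \ref{Johnson_inj}. The strategy is to check their hypotheses in the present situation, where the acting series is $K_* = \mathcal{A}_*(G)$ acting on $G_* = \Gamma_*(G)$ (i.e.\ the universal action of Proposition \ref{Universal_action}).

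\medskip

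For injectivity, the plan is to invoke Lemma \ref{Johnson_inj}, which requires us to verify that
\[\mathcal{A}_*(G) \;=\; \mathcal{A}_*\bigl(IA_G,\, \Gamma_*(G)\bigr),\]
with the right-hand side understood in the sense of Remark \ref{A_on_K}. By that remark, the $j$-th term on the right is the set of $\sigma \in IA_G$ such that $[\sigma, \Gamma_i(G)] \subseteq \Gamma_{i+j}(G)$ for all $i \geq 1$. This is tautologically contained in $\mathcal{A}_j(G)$ (take $i=1$ and use Lemma \ref{Explicit_A_*}), and the reverse inclusion is exactly the content of Lemma \ref{Explicit_A_*}, which amounts to the fact that a derivation of $\Lie(G)$ that vanishes in degree $1$ vanishes everywhere because $\Lie(G)$ is generated in degree $1$ (Proposition \ref{engdeg1}). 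Hence the two filtrations coincide and $\tau$ is injective.

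\medskip

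For equivariance, I would apply Lemma \ref{Johnson_equiv} with $K_0 := \Aut(G)$ and $K_j := \mathcal{A}_j(G)$ for $j \geq 1$. Two things need to be checked: first, that the action $IA_G \circlearrowright \Gamma_*(G)$ extends to an action of $\Aut(G)$ on $\Gamma_*(G)$; and second, that each $\mathcal{A}_j(G)$ is normal in $\Aut(G)$. The first point is immediate, since the subgroups $\Gamma_i(G)$ are characteristic in $G$, so the defining action $\Aut(G) \circlearrowright G$ restricts to an action on every $\Gamma_i(G)$, and by Proposition \ref{actions_dans_SFC} this gives an action in $\mathcal{SCF}$. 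The second point follows from the reformulation
\[\mathcal{A}_j(G) \;=\; \ker\!\bigl(\Aut(G) \longrightarrow \Aut(G/\Gamma_{j+1}(G))\bigr)\]
furnished by Lemma \ref{Explicit_A_*}, which exhibits $\mathcal{A}_j(G)$ as the kernel of a group morphism, hence normal. Lemma \ref{Johnson_equiv} then yields equivariance of $\tau$ under $\Aut(G)/IA_G = K_0/K_1$.

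\medskip

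There is no real obstacle here, since all the machinery has been set up: the only small subtlety is the identification of $\mathcal{A}_*(G)$ with $\mathcal{A}_*(IA_G, \Gamma_*(G))$, which relies crucially on Lemma \ref{Explicit_A_*} and ultimately on generation in degree one. Everything else is bookkeeping in terms of the functorial properties established earlier.
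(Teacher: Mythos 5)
Your proof is correct and follows the same route as the paper: the lemma is exactly the conjunction of Lemmas \ref{Johnson_equiv} and \ref{Johnson_inj} applied to the universal action $\mathcal A_*(G) \circlearrowright \Gamma_*(G)$, with the key observation that the $\Gamma_i(G)$ are characteristic so that $\Aut(G)$ acts and $\mathcal A_0(G) = \Aut(G)$. Your detour through Lemma \ref{Explicit_A_*} to identify $\mathcal A_*(G)$ with $\mathcal A_*(IA_G,\Gamma_*(G))$ is harmless but not needed, since this identification is already tautological from formula \eqref{def_A_*} and Remark \ref{A_on_K}.
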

The filtration $\Gamma_*(IA_G)$ also acts on $\Gamma_*(G)$, by pulling back the universal action along the inclusion $i: \Gamma_*(IA_G) \subseteq \mathcal A_*(F_n)$. The associated Johnson morphism is denoted by: 
$\tau' =\tau \circ i_*\label{déf_tau'}$.
The morphism $\tau'$ is still equivariant, but is injective if and only if $G$ satisfies Andreadakis' conjecture (\emph{i.e.\ }if $i_* = \Id{}$).

\begin{ex}\label{Johnson_Fn}
If $G $ is a free group, then $\Lie(F_n)$ is the free Lie algebra $\mathfrak LV$ on $V = G^{ab}$. If $\mathfrak g \circlearrowright \mathfrak h$ is an action in $\mathcal Lie$, then $\mathfrak g$ also acts on $\mathfrak h^o$, which is obtained by considering the module $\mathfrak h$ as an abelian Lie algebra. Derivations from $\mathfrak g$ to $\mathfrak h$ then identify with sections of the projection $\mathfrak h^o \rtimes \mathfrak g \twoheadrightarrow \mathfrak g$. Hence, the free Lie algebra is also free with respect to derivations. In particular:
\[\Der_k(\mathfrak LV) \cong \Hom_{\k}(V, \mathfrak L_k V) \cong V^*\otimes \mathfrak L_k V.\]
\end{ex}

The following result is well-known \cite[th.\ 6.1]{Kawazumi1}:
\begin{prop}\label{IAn^ab} 
In degree one, the Johnson morphism $\tau'$ is a $\GL_n(\mathbb Z)$-equivariant isomorphism:
\[\tau_1': IA_n^{ab} \cong V^*\otimes \Lambda^2V.\]
\end{prop}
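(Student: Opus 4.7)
The plan is to compute $\tau_1'$ on the classical Nielsen generators of $IA_n$, observe that the images form (up to sign) the standard basis of $V^*\otimes\Lambda^2V$, and conclude by a rank count.

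Recall that $IA_n$ is generated by the automorphisms $K_{ab}\colon x_a \mapsto x_b x_a x_b^{-1}$ (fixing the other $x_c$, for $a\neq b$) and $K_{a,bc}\colon x_a \mapsto x_a[x_b,x_c]$ (fixing the other $x_d$, for $a,b,c$ distinct and $b<c$). Their total number is
\[
n(n-1) + n\binom{n-1}{2} = \tfrac{1}{2}n^2(n-1),
\]
which is exactly the rank of $V^*\otimes\Lambda^2V$. Writing $e_i$ for the class of $x_i$ in $V$, and using the identification $\Der_1(\mathfrak LV)\cong V^*\otimes\mathfrak L_2V\cong V^*\otimes\Lambda^2V$ from Example \ref{Johnson_Fn}, the formula $\tau(\sigma)=(x\mapsto\sigma(x)x^{-1})$ from Example \ref{déf_tau} gives immediately
\[
\tau_1'(K_{ab}) = e_a^*\otimes(e_b\wedge e_a), \qquad \tau_1'(K_{a,bc}) = e_a^*\otimes(e_b\wedge e_c).
\]
Partitioning the standard basis $\{e_a^*\otimes(e_b\wedge e_c)\}_{b<c}$ of $V^*\otimes\Lambda^2V$ according as $a\in\{b,c\}$ or not, the $K_{ab}$'s cover the first family (up to a sign) and the $K_{a,bc}$'s cover the second.

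Letting $F$ be the free abelian group on the set of Nielsen generators, we get a composite
\[
F \xrightarrow{\,p\,} IA_n^{ab} \xrightarrow{\,\tau_1'\,} V^*\otimes\Lambda^2V
\]
in which $p$ is surjective (by Nielsen) and $\tau_1'\circ p$ is a bijection of free abelian groups of equal finite rank (a signed permutation of bases). A two-line diagram chase then forces both $p$ and $\tau_1'$ to be isomorphisms. For the $\GL_n(\Z)$-equivariance: each $\Gamma_i(F_n)$ is characteristic in $F_n$, so with $K_0 = \Aut(F_n)$ and $K_1 = IA_n$ we are exactly in the setting of Remark \ref{A_on_K} and Lemma \ref{Johnson_equiv}, which give equivariance of $\tau$ with respect to $\Aut(F_n)/IA_n = \GL_n(\Z)$; passing to the quotient $IA_n^{ab}$ preserves it.

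The only nontrivial input is the Nielsen generating set; the rest is essentially bookkeeping. The single point requiring care is the sign convention in the identification $\mathfrak L_2V\cong\Lambda^2V$, i.e.\ $[e_i,e_j]\leftrightarrow e_i\wedge e_j$, which must be tracked consistently throughout the calculation of $\tau_1'$ on the two families of generators.
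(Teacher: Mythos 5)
Your proposal is correct and follows essentially the same route as the paper: compute $\tau_1'$ on the Nielsen generators, observe that their images form (up to sign) the standard basis of $V^*\otimes\Lambda^2 V$, and conclude; the equivariance likewise comes from Lemma \ref{Johnson_equiv} as in the surrounding text. You merely spell out the rank count and the diagram chase that the paper compresses into ``one can check by a direct calculation''.
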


\begin{proof}
The group $IA_n$ is generated by the following elements \cite{Nielsen} -- see also \cite[5.6]{BBM}:
\begin{equation}\label{gen_of_IAn}
K_{ij}: x_t \longmapsto \begin{cases}
                                 x_j x_i x_j^{-1} &\text{if } t = i \\
						                     x_t              &\text{else }    
													 \end{cases}
\ \ \text{ and }\ \ 
K_{ijk }: x_t \longmapsto \begin{cases}
                                [x_j, x_k] x_i &\text{if } t = i \\
						                     x_t              &\text{else. }    
														\end{cases}
\end{equation}
One can check by a direct calculation that these generators are sent to a basis of the free abelian group $V^*\otimes \Lambda^2V$.
\end{proof}

\subsection{Lazard's theorem}

\begin{defi}\label{def_filtered_alg}
A \emph{filtered algebra} $A_*$ is an associative $\k$-algebra $A_0$ endowed with a filtration by ideals: $A = A_0 \supseteq A_1 \supseteq ...$ such that:
$\forall i, j,\ A_i A_j \subseteq A_{i+j}.$ We denote by $\kfAlg[]$ the category of filtered algebras (and filtration-preserving morphisms).
\end{defi}

\begin{ex}
Let $\k G$ be the \emph{group algebra} of $G$ with coefficients in the (commutative) ring $\k$. We denote by $\varepsilon: \k G \rightarrow \k$ its canonical \emph{augmentation} and by $I_\k G:= \ker(\varepsilon)$ its \emph{augmentation ideal} (we will sometimes write $IG$, or even $I$, for short). Then $\k G$ is filtered by the powers $I^*G$ of its augmentation ideal. If $G$ is a free group, then $\gr(\k G)$ is the tensor algebra over $G^{ab} \otimes \k$ \cite[th.\ 6.2]{Passi}.
\end{ex}

From Theorem \ref{Kaloujnine}, we can deduce the useful corollary \cite[th.\ 3.1]{Lazard}:
\begin{theo}[Lazard]\label{Lazard}
Let $A = A_0 \supset A_1 \supseteq \cdots$ be a filtered algebra. 
Then $A^\times_*:= A^\times \cap (1 + A_*)$ is a strongly central filtration on $A^\times_1 \subseteq A^\times$, and $(-) -1$ induces an embedding of graded Lie algebras:
\[\Lie(A^\times_*) \hookrightarrow \gr_*(A_1).\]
\end{theo}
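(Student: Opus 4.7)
The plan is to derive Lazard's theorem from Kaloujnine's theorem (Theorem \ref{Kaloujnine}), by realizing $A^\times_*$ as the Kaloujnine filtration associated with a natural action of $A^\times$ on a strongly central series built from the algebra filtration $A_*$.

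Concretely, I would take $G := (A, +)$ equipped with the shifted filtration $G_1 := A_0 = A$ and $G_{i+1} := A_i$ for $i \geq 1$. Since $G$ is abelian, $G_*$ is trivially a strongly central filtration. The group $A^\times$ acts on $G$ by left multiplication, preserving each $G_i$ because the $A_i$ are two-sided ideals. Remark \ref{A_on_K} then provides a strongly central filtration $\mathcal A_*(A^\times, G_*)$ on $A^\times$, where $k \in \mathcal A_j(A^\times, G_*)$ iff $(k-1)\, G_i \subseteq G_{i+j}$ for every $i \geq 1$. Specialised at $i = 1$ and applied to $1 \in A$, this forces $k - 1 \in A_j$; conversely this suffices, since $A_j A_{i-1} \subseteq A_{i+j-1}$. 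Hence $\mathcal A_j(A^\times, G_*) = A^\times_j$, which proves the strong centrality.

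For the embedding $\Lie(A^\times_*) \hookrightarrow \gr_*(A_1)$, I would directly study the map $\lambda_j: A^\times_j \to A_j/A_{j+1}$ sending $1+x$ to $\overline{x}$. Since $(1+a)(1+b) = 1 + (a+b) + ab$ with $ab \in A_{2j} \subseteq A_{j+1}$, it is additive modulo $A^\times_{j+1}$; its kernel is exactly $A^\times_{j+1}$, so it descends to an injection $\Lie_j(A^\times_*) \hookrightarrow \gr_j(A_1)$. Compatibility with Lie brackets follows from the standard identity
\[
[1+a, 1+b] - 1 \;=\; (ab - ba)\,(1+a)^{-1}(1+b)^{-1},
\]
which, together with $(1+a)^{-1}(1+b)^{-1} \equiv 1 \pmod{A_1}$, reduces modulo $A_{i+j+1}$ to $\overline{ab - ba} = [\overline{a}, \overline{b}]$, matching the commutator bracket on $\gr_*(A_1)$. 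One could alternatively realise $\lambda$ intrinsically as the evaluation at $\overline{1} \in G_1/G_2$ of the Johnson morphism $\tau: \Lie(\mathcal A_*(A^\times, G_*)) \to \Der_*(\Lie(G_*))$ of Example \ref{déf_tau}, and invoke Lemma \ref{Johnson_inj} for injectivity; since $\Lie(G_*)$ is abelian, every graded derivation is determined by its restriction to degree one, so no information is lost.

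Nothing is genuinely hard here; the one real subtlety is the indexing convention. It is essential to take $G_1 = A_0 = A$ and not $G_1 = A_1$, so that the defining condition of $\mathcal A_j(A^\times, G_*)$, when specialised at $i = 1$, actually forces $k - 1 \in A_j$. With the shorter choice $G_1 = A_1$ one would only constrain $(k-1)A_i$ for $i \geq 1$ and thus obtain a strict inclusion $A^\times_j \subseteq \mathcal A_j(A^\times, G_*)$, insufficient to conclude.
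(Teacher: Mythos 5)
Your proof is correct and follows essentially the same route as the paper: Lazard's theorem is obtained there too by applying Kaloujnine's construction to the left-multiplication action of $A^\times$ on the filtered abelian group $(A,+)$, and the compatibility with brackets is checked \emph{via} the identity $[g,h]-1 = [g-1,h-1]g^{-1}h^{-1}$, which is your identity with $g = 1+a$, $h = 1+b$; your explicit handling of the index shift $G_{i+1} = A_i$ makes precise a point the paper leaves implicit. One small caveat on your optional aside: for an abelian graded Lie algebra a graded derivation is just an arbitrary graded linear map and is \emph{not} determined by its restriction to degree one, so the injectivity of the composite with evaluation at $\bar 1$ really comes from the fact that the derivations in the image of $\tau$ are left multiplications $x \mapsto ax$, hence determined by their value on $1$ --- but this does not affect your main, direct argument.
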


\begin{rmq}\label{Lazard_on_G}
Fix a morphism $G \overset{\alpha}{\longrightarrow} A^\times$. We can pull back the filtration given by the theorem to get a strongly central filtration $\alpha^{-1}(1+A_*)$ on $G_1 = \alpha^{-1}(1+A_1)$.
\end{rmq}

\begin{proof}[Proof of Theorem \ref{Lazard}]

The filtration $A = A_0 \supset A_1 \supseteq \cdots$ can be seen as a filtration of the abelian group $A$. As $A$ is abelian, it has to be strongly central. 
Consider the action by left multiplication:
\[\rho: A^\times \longrightarrow \Aut(A,+).\]
Let $a \in A^\times$. One can easily check that $\rho(a) \in \mathcal A_j(\cpx A)$ (where $\mathcal A_j(\cpx A)$ is the filtration defined in Kaloujnine's theorem \ref{Kaloujnine})  if and only if $a \in 1+A_j$. Thus, $A^\times \cap (1+A_i) = \rho^{-1}(\mathcal A_j(\cpx A))$ is a strongly central filtration, as announced. It remains to show that $\partial = \alpha - 1$ induces a morphism of Lie ring (necessarily injective). This can be checked directly from the formula:
\[[g,h] - 1 = [g-1, h-1]g^{-1}h^{-1}.\]
We will give a slightly different proof later on, using the concept of derivations (see Paragraph \ref{derivations}).
\end{proof}

\begin{ex}\label{dimension}
Applying Remark \ref{Lazard_on_G} to the inclusion of $G$ in $\k G$ filtered by the powers of the augmentation algebra, we get the \emph{dimension series} of $G$: 
\[D_*^{\k}G = G \cap (1+ I^*_{\k}G).\] 
It is a strongly central series on $G$, so it contains $\Gamma_*G$. The question of the equality of $D_*^{\Z}G$ and $\Gamma_*G$ was known as the \emph{dimension subgroup problem} during a long time, until an example of a group for which the two filtration differ was given in \cite{Rips}. See \cite[chap. 2]{Mikhailov} for more on this subject.

If $G$ is a free group, then $\Lie(D_*^{\Z}G)$ is the sub-Lie ring generated in degree one in the tensor algebra $\gr(\Z G) \cong TV$. Hence (by the PBW theorem), it is the free Lie ring. It then has to coincide with $\Lie(G)$, so $D_*^{\Z}G = \Gamma_*G$. Thus, free groups have the \emph{dimension property}.
\end{ex}

Lazard's theorem gives a construction of a strongly central filtration from a filtered algebra. Conversely, we can define a filtered algebra from a strongly central filtration $G_*$ on $G = G_1$. Indeed, let $\k G$ be filtered by:
\[F_i:= \k G \cdot (N_i-1) = \ker(\k G \longrightarrow \k (G/H_i)).\]
This filtration does not make $\k G$ into a filtered algebra, but it generates a filtration which does: 
\[\mathfrak a_j^\k(N_*):= \sum\limits_{i_1 + \cdots + i_n \geq j}{\k G \cdot (N_{i_1}-1) \cdots (N_{i_n}-1)}.\]
One can easily check that these constructions are universal:
\begin{prop}\label{adjonction_a}
The above constructions define an adjunction:
\[\begin{tikzcd}
  \mathcal{SCF} \ar[r, shift left, "\mathfrak a_*^{\k}"] & \kfAlg.  \ar[l, shift left, "(-)^\times_*"]
\end{tikzcd}\]
\end{prop}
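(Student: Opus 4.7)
The plan is to exhibit a natural bijection
\[\Hom_{\kfAlg}\!\bigl(\mathfrak a_*^{\k}(G_*),\, A_*\bigr) \;\xrightarrow{\,\sim\,}\; \Hom_{\mathcal{SCF}}\!\bigl(G_*,\, A^{\times}_*\bigr)\]
inherited from the classical universal property of the group algebra, and then to check that the two filtration conditions match up. Everything is driven by the observation that $\mathfrak a_*^{\k}(G_*)$ is, essentially by construction, the smallest ideal filtration on $\k G$ making the canonical map $G \hookrightarrow (\k G)^{\times}$ into a morphism in $\mathcal{SCF}$ with respect to $G_*$.

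For the direction $\kfAlg \to \mathcal{SCF}$, starting from a filtered-algebra morphism $\varphi: \k G \to A$ with $\varphi(\mathfrak a_j^{\k}(G_*)) \subseteq A_j$, I would restrict to the units to get a group morphism $f := \varphi|_{G_1} : G_1 \to A^{\times}$. To check $f(G_i) \subseteq A^{\times}_i = A^{\times} \cap (1+A_i)$, I would use that for any $g \in G_i$ the element $g-1$ appears as the single-factor term ($n=1$, $i_1 = i$) in the defining sum, hence $g-1 \in \mathfrak a_i^{\k}(G_*)$; therefore $f(g)-1 = \varphi(g-1) \in A_i$.

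For the reverse direction $\mathcal{SCF} \to \kfAlg$, given $f: G_* \to A^{\times}_*$ I would first use the universal property of $\k G$ to extend $f$ uniquely to an algebra morphism $\tilde f: \k G \to A$. The content is then to check that $\tilde f$ is filtration-preserving. Since $\mathfrak a_j^{\k}(G_*)$ is spanned as a $\k$-module by products $a \cdot (g_1-1)\cdots(g_n-1)$ with $g_\ell \in G_{i_\ell}$ and $i_1 + \cdots + i_n \geq j$, multiplicativity of $\tilde f$ reduces the claim to $\tilde f(g_\ell-1) = f(g_\ell)-1 \in A_{i_\ell}$, which holds by hypothesis; the ideal property $A_{i_1}\cdots A_{i_n} \subseteq A_{i_1+\cdots+i_n} \subseteq A_j$ then closes the argument.

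These two constructions are manifestly mutually inverse (each is determined by its restriction to $G_1$, and that restriction is tautological in one direction and part of the universal property in the other), and naturality in $G_*$ and $A_*$ is inherited from the naturality of the universal property of the group algebra. There is no serious obstacle here: the slightly intricate-looking definition of $\mathfrak a_*^{\k}(G_*)$ is precisely what is needed to make the two filtration conditions match, and the proof is essentially unpacking that definition.
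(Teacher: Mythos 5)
Your proof is correct and is precisely the routine verification the paper leaves to the reader (the paper only remarks that "one can easily check that these constructions are universal"): you reduce to the classical adjunction $\Hom_{\kAlg}(\k G, A) \cong \Hom_{\mathcal Grps}(G, A^\times)$ and check that the two filtration conditions correspond, using that $g-1 \in \mathfrak a_i^{\k}(G_*)$ for $g \in G_i$ in one direction and the spanning set of $\mathfrak a_j^{\k}(G_*)$ together with $A_{i_1}\cdots A_{i_n} \subseteq A_{i_1+\cdots+i_n}$ in the other. Nothing is missing.
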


\subsection{Congruence groups}\label{congruence_filtration}

If $I$ is an (associative) ring without unit, recall that its \emph{congruence group} $GL_n(I)$ is defined as:
\[GL_n(I):= \ker(GL_n(A) \longrightarrow GL_n(A/I)),\]
where $A$ is any unitary (associative) ring containing $I$ as a (two-sided) ideal, (\emph{e.g.\ } $A = I \rtimes \Z$). This group depends only on $I$, as it is exactly $(1 + M_n(I))^\times$.

\medskip

If $A = A_0 \supseteq A_1 \supseteq \cdots$ is a filtered algebra (see Definition \ref{def_filtered_alg}), then so is the matrix algebra $M_n(A)$, endowed with the filtration $M_n(A_*)$. Theorem \ref{Lazard} gives us a strongly central filtration of the congruence group:
\[GL_n(A_1) = GL_n(A) \cap (1 + M_n(A_1)) 
            = \ker(GL_n(A) \longrightarrow GL_n(A/A_1)).\]
by congruence subgroups:
\[GL_n(A_j) =  GL_n(A) \cap (1 + M_n(A_j)) 
            = \ker(GL_n(A) \longrightarrow GL_n(A/A_j)),\]
and an embedding of the associated Lie ring into a matrix algebra:
\[\Lie(GL_n(A_*)) \hookrightarrow \gr_*(M_n(A_*)) \cong M_n(\gr_*(A_*)).\]
As in the proof of Theorem \ref{Lazard}, this filtration can be interpreted as:
\[GL_n(A_*) = \mathcal A_*(GL_n(A), A^n_*).\]
We also can recover Lazard's theorem as the case $n = 1$ of this construction.

\medskip

Suppose that $A_*$ is commutative. Then the usual determinant defines a filtration-preserving morphism:
\[\det: GL_n(A_*) \longrightarrow GL_1(A_*) = A^\times_*.\]
Indeed, if $M \in M_n(A_j)$, then $\det(\Id{} + M) \in 1 + A_j$. The following proposition determines the associated graded morphism:

\begin{prop}\label{det_and_tr}
The following square commutes:
\[\begin{tikzcd} 
\Lie(GL_n(A_*)) \ar[r, hook, "(-) - \Id{}"] \ar[d, "\det"] & M_n(\gr(A_*)) \ar[d, "\tr"]\\
\Lie(A^\times_*) \ar[r, hook, "(-) - 1"]                & \gr(A_*).
\end{tikzcd}\]
Moreover, it is Cartesian, that is:
\[\Lie(GL_n(A_*)) \cong  \tr^{-1}(\Lie(A^\times_*) - 1).\]
\end{prop}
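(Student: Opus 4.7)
The plan is to address the commutativity and the Cartesian property separately, using the Leibniz determinant formula for the first and a short exact sequence combined with explicit lifts for the second.

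\textbf{Commutativity.} For $M = \Id + N \in GL_n(A_j)$ with $N \in M_n(A_j)$, I would expand $\det M$ using the Leibniz formula. Any permutation $\sigma \neq \Id$ has at least two non-fixed points, so the corresponding term contains at least two off-diagonal entries of $M$ lying in $A_j$, and thus lies in $A_{2j} \subseteq A_{j+1}$. The identity permutation contributes $\prod_i (1 + N_{ii}) = 1 + \tr N + R$, where $R$ is a sum of products of at least two factors $N_{ii} \in A_j$, hence $R \in A_{2j} \subseteq A_{j+1}$. This yields $\det M - 1 \equiv \tr N \pmod{A_{j+1}}$, which is the commutativity of the square.

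\textbf{Cartesian property via an exact sequence.} I plan to exhibit the short exact sequence in $\mathcal{SCF}$:
\[1 \longrightarrow SL_n(A_*) \longrightarrow GL_n(A_*) \xrightarrow{\det} A^\times_* \longrightarrow 1,\]
where $SL_n(A_j) := GL_n(A_j) \cap SL_n(A)$, the surjectivity of $\det$ at each level being witnessed by the splittings $a \mapsto \diag(a, 1, \ldots, 1) \in GL_n(A_j)$ for $a \in A^\times \cap (1 + A_j)$. Applying the exact functor $\Lie$ (Proposition \ref{exactness_of_L}) then yields a short exact sequence of graded Lie rings whose quotient map is identified, via the commutativity above, with $\tr$ (composed with the inverse of the shift $(-) - 1$). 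A brief diagram chase reduces the Cartesian claim to the identification $\Lie(SL_n(A_*)) = \ker(\tr)$ inside $M_n(\gr(A_*))$, the inclusion $\subseteq$ being immediate from commutativity.

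\textbf{The main obstacle} is the reverse inclusion: lifting every traceless class in $M_n(\gr_j(A_*))$ to an element of $SL_n(A_j)$. Since $\ker(\tr)$ is generated as an abelian group by the off-diagonal classes $a E_{ik}$ (for $i \neq k$) and the traceless diagonal classes $a(E_{ii} - E_{kk})$, it suffices to lift these. The off-diagonal ones are handled by the elementary transvections $E_{ik}(a) := \Id + a E_{ik} \in SL_n(A_j)$. For the diagonal ones, the key observation is the identity $(1+a)(1-a) + a \cdot a = 1$, which furnishes the matrix
\[D_{ik}(a) := \Id + a(E_{ii} + E_{ik} - E_{ki} - E_{kk}) \in SL_n(A_j);\]
multiplying it by $E_{ik}(-a) E_{ki}(a)$ yields an element of $SL_n(A_j)$ whose class in $\Lie_j(SL_n(A_*))$ is $a(E_{ii} - E_{kk})$, once one observes that all cross terms $(x - \Id)(y - \Id)$ lie in $M_n(A_{2j}) \subseteq M_n(A_{j+1})$ and therefore die in the associated graded. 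Summing such lifts produces the required preimage of any traceless class, establishing $\ker(\tr) \subseteq \Lie(SL_n(A_*))$ and completing the proof.
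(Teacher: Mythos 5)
Your proof is correct and follows essentially the same route as the paper: the same expansion $\det(\Id{}+M)\equiv 1+\tr(M) \pmod{A_j^2}$ for commutativity, and the same explicit lifts of generators — transvections for the off-diagonal classes, the determinant-one matrix $\Id{}+a(\textbf{e}_{ii}+\textbf{e}_{ik}-\textbf{e}_{ki}-\textbf{e}_{kk})$ for the traceless diagonal classes, and $\diag(a,1,\dots,1)$ to account for $\Lie(A^\times_*)$. The only difference is organizational: you route the Cartesian claim through the exact sequence $SL_n(A_*)\hookrightarrow GL_n(A_*)\twoheadrightarrow A^\times_*$ and the exactness of $\Lie$, whereas the paper lifts additive generators of $\tr^{-1}(\Lie(A^\times_*)-1)$ directly; both reductions rest on the same computations.
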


The kernels of $\det$ and $\tr$ then coincide. We thus recover (an generalize slightly) a result of \cite{Lopez}:
\begin{cor}\label{gr_of_sl_n}
Let $SL_n(A_*)$ be the kernel of the determinant. Then:
\[\Lie(SL_n(A_*)) \cong  \ker(\tr) = \mathfrak{sl}_n(\gr(A_*)).\]
\end{cor}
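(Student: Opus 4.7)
The plan is to deduce this corollary as an essentially formal consequence of Proposition \ref{det_and_tr}, combined with the exactness of the Lie functor established in Proposition \ref{exactness_of_L}. First I would observe that $SL_n(A_*)$ is, by definition, the kernel in $\mathcal{SCF}$ of the filtered morphism $\det: GL_n(A_*) \to A^\times_*$: concretely, $SL_n(A_*)$ carries the filtration $SL_n(A_j) = SL_n(A) \cap (1 + M_n(A_j))$, which coincides with the filtration induced on the subgroup $SL_n(A_1) \subseteq GL_n(A_1)$ by restricting $GL_n(A_*)$ (this matches the description of kernels in $\mathcal{SCF}$ given in the proof of Proposition \ref{SCF_cocomplete}). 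Since $\Lie$ is exact, it preserves kernels, so
\[\Lie(SL_n(A_*)) \;=\; \ker\bigl(\Lie(\det) : \Lie(GL_n(A_*)) \longrightarrow \Lie(A^\times_*)\bigr).\]

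Next I would feed this into the commutative square of Proposition \ref{det_and_tr}. Since that square is Cartesian, an element $x \in \Lie(GL_n(A_*))$ satisfies $\det(x) = 0$ in $\Lie(A^\times_*)$ precisely when its image under the injection $(-) - \Id{n}$ into $M_n(\gr(A_*))$ is killed by the trace. Equivalently, the embedding $(-) - \Id{n}$ identifies $\ker(\Lie(\det))$ with $\ker(\tr) \cap \ima(\Lie(GL_n(A_*)) \hookrightarrow M_n(\gr(A_*)))$. But the Cartesian property also ensures $\tr^{-1}(0) \subseteq \tr^{-1}(\Lie(A^\times_*) - 1) \cong \Lie(GL_n(A_*))$, so this intersection is simply the full $\ker(\tr)$ inside $M_n(\gr(A_*))$, which by definition is $\mathfrak{sl}_n(\gr(A_*))$.

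Putting these two steps together yields the desired isomorphism of graded Lie rings, the bracket structure matching the usual matrix commutator because the embedding of Theorem \ref{Lazard} already identifies group commutators with matrix commutators in the graded algebra. I do not expect any serious obstacle here: the whole content is packaged in Proposition \ref{det_and_tr} and in the exactness of $\Lie$, and the only small point to verify explicitly is that the filtration one puts on $SL_n$ to make it an object of $\mathcal{SCF}$ is indeed the one induced from $GL_n(A_*)$, so that the formation of the kernel commutes with both $\Lie$ and the identification with the trace-zero matrices.
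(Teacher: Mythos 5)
Your argument is correct and follows essentially the same route as the paper: the corollary is read off from the Cartesian square of Proposition \ref{det_and_tr}, together with the identification of $\Lie(SL_n(A_*))$ with $\ker(\Lie(\det))$. One step deserves a word of care, though. Exactness of $\Lie$ in the sense of Proposition \ref{exactness_of_L} means preservation of \emph{short exact sequences}; it does not by itself give $\Lie(\ker u)=\ker(\Lie u)$ for an arbitrary morphism $u$ of $\mathcal{SCF}$, because $\Lie$ need not send monomorphisms (as opposed to injections in the sense of Definition \ref{def_inj-surj}) to monomorphisms, so the image factorization of $u$ can get in the way. Concretely, a class in $\ker(\Lie_j(\det))$ is represented by some $M\in GL_n(A_j)$ with $\det M\in 1+A_{j+1}$, and one must check that $M$ is congruent modulo $GL_n(A_{j+1})$ to an element of $SL_n(A_j)$; this holds because $N=\diag(\det M,1,\dots,1)$ lies in $GL_n(A_{j+1})$ and $MN^{-1}\in SL_n(A_j)$, which amounts to saying that $\det$ is a surjection of $\mathcal{SCF}$ onto its image filtration. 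With that (easy) observation supplied, the rest of your argument --- the identification of $\ker(\Lie(\det))$ with $\ker(\tr)=\mathfrak{sl}_n(\gr(A_*))$ via the Cartesian square, using that $0\in\Lie(A^\times_*)-1$ --- is exactly what the paper does.
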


\begin{rmq}\label{SL_n_et_GL_n}
If $\Lie(A_*^\times) = 0$, then $\Lie(GL_n(A_*)) = \Lie(SL_n(A_*))$, and this Lie ring identifies to $\mathfrak{sl}_n(\gr(A_*))$. This happens for example when $GL_1(A_1) = \{1\}$ (implying $SL_n(A_*) = GL_n(A_*)$), which is verified for $A_* = q^*\Z$ (if $q > 2$), or $A_* = t^*\k[t]$. 
\end{rmq}

\begin{proof}[Proof of Proposition \ref{det_and_tr}]
Let $M \in M_n(A_j)$. Then:
\[\det(\Id{} + M) \equiv 1 + \tr(M) \pmod{A_j^2}.\]
When $j \geq 1$, then $A_j^2 \subseteq A_{j+1}$, so this formula gives the commutativity of the above square.
The module $\tr^{-1}(\Lie_j(A^\times_*) - 1)$ is additively generated by the matrices: 
\[\bar{a} \textbf{e}_{\alpha \beta},\ 
\bar{a} (\textbf{e}_{11} - \textbf{e}_{\alpha\alpha}) \text{ and } 
\bar{b}\textbf{e}_{11},\ \text{ for }
\alpha \neq \beta, a \in A_j \text{ and }1 + b \in A^\times_j.\] 
Replacing $\bar{a} (\textbf{e}_{11} - \textbf{e}_{\alpha\alpha})$ by $\bar{a} (\textbf{e}_{11} + \textbf{e}_{1\alpha} - \textbf{e}_{\alpha 1} - \textbf{e}_{\alpha\alpha})$, we can lift these to $GL_n(A_j)$ as follows:
\[\left\{
\begin{array}{lll}
\Id{} + a \textbf{e}_{\alpha \beta} &\text{ lifts }&\bar{a} \textbf{e}_{\alpha \beta}, \\
\Id{} + a (\textbf{e}_{11} + \textbf{e}_{1\alpha} - \textbf{e}_{\alpha 1} - \textbf{e}_{\alpha\alpha})  &\text{ lifts }
&\bar{a} (\textbf{e}_{11} + \textbf{e}_{1\alpha} - \textbf{e}_{\alpha 1} - \textbf{e}_{\alpha\alpha}), \\
\Id{} +b\textbf{e}_{11} &\text{ lifts }&\bar{b}\textbf{e}_{11}.
\end{array}
\right.\]
This completes the proof.
\end{proof}

Let $n \geq 3$. If $A$ is a "\emph{non-totally-imaginary Dedekind ring of arithmetic type}" and $\mathfrak q$ is an ideal of $A$, then we have \cite[cor. 4.3 (b)]{BMS} that $SL_n(\mathfrak q)$ is normally generated in $SL_n(A)$ (in fact in $E_n(A)$) by the shear mappings $\Id{} + t \textbf{e}_{\alpha\beta}$ with $\alpha \neq \beta$ and $t \in \mathfrak q$. This applies for instance to $A = \Z$ and $\mathfrak q = (q)$. 
From this we deduce:
\begin{prop}\label{LCS_of_sl_n} 
If $n \geq 5$, under the above hypothesis:
\[\Gamma_*(SL_n(\mathfrak q)) = SL_n(\mathfrak q^*).\]
\end{prop}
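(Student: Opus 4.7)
The plan is to prove the two inclusions separately. One direction, $\Gamma_k(SL_n(\mathfrak q)) \subseteq SL_n(\mathfrak q^k)$, is automatic: the discussion preceding Corollary \ref{gr_of_sl_n} shows that $SL_n(\mathfrak q^*)$ is a strongly central filtration on $SL_n(\mathfrak q)$, and the lower central series is the minimal such filtration by Proposition \ref{gammaSCF}.

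For the reverse inclusion $SL_n(\mathfrak q^k) \subseteq \Gamma_k(SL_n(\mathfrak q))$, I would proceed by induction on $k$. The case $k = 1$ is trivial. Suppose $SL_n(\mathfrak q^i) \subseteq \Gamma_i(SL_n(\mathfrak q))$ holds for all $i \leq k$. Applying the Bass--Milnor--Serre result quoted just above the statement to the ideal $\mathfrak q^{k+1}$, one sees that $SL_n(\mathfrak q^{k+1})$ is normally generated in $SL_n(A)$ by elementary shear matrices $\Id + t\,\textbf{e}_{\alpha\beta}$ with $\alpha \neq \beta$ and $t \in \mathfrak q^{k+1}$. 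Because $\Gamma_{k+1}(SL_n(\mathfrak q))$ is characteristic in $SL_n(\mathfrak q)$, and $SL_n(\mathfrak q)$ is normal in $SL_n(A)$, conjugation by elements of $SL_n(A)$ preserves $\Gamma_{k+1}(SL_n(\mathfrak q))$. It therefore suffices to show that each such shear itself lies in $\Gamma_{k+1}(SL_n(\mathfrak q))$.

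Using the factorization $\mathfrak q^{k+1} = \mathfrak q \cdot \mathfrak q^k$, I would write $t = \sum_i a_i b_i$ with $a_i \in \mathfrak q$ and $b_i \in \mathfrak q^k$, choose a third index $\gamma \notin \{\alpha, \beta\}$ (which exists since $n \geq 5 \geq 3$), and invoke the standard Steinberg commutator identity for elementary matrices:
\[ \Id + a_i b_i\, \textbf{e}_{\alpha\beta} \;=\; \bigl[\,\Id + a_i \textbf{e}_{\alpha\gamma},\ \Id + b_i \textbf{e}_{\gamma\beta}\,\bigr]. \]
Since shears sharing the same ordered index pair commute, the full shear $\Id + t \textbf{e}_{\alpha\beta}$ is the product of these commutators. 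Each factor lies in $[SL_n(\mathfrak q), SL_n(\mathfrak q^k)]$, and by the inductive hypothesis this is contained in $[\Gamma_1(SL_n(\mathfrak q)), \Gamma_k(SL_n(\mathfrak q))] \subseteq \Gamma_{k+1}(SL_n(\mathfrak q))$, closing the induction.

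The main difficulty---and the reason the arithmetic hypothesis on $A$ and the lower bound on $n$ are present---is the Bass--Milnor--Serre normal generation theorem itself, which is a deep congruence-subgroup result. Once that input is granted, the remainder is a clean two-step argument: reduce to elementary shears using the characteristicity of $\Gamma_{k+1}$, then split each shear as a product of commutators via the Steinberg identity. The fact that $n$ is large enough to supply a third index is only used in this latter, elementary step.
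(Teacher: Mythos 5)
Your proof is correct and follows essentially the same route as the paper's: both directions are handled identically, with the reverse inclusion obtained from the Bass--Milnor--Serre normal generation statement together with the Steinberg commutator identity $\Id{} + ab\,\textbf{e}_{\alpha\beta} = [\Id{} + a\textbf{e}_{\alpha\gamma}, \Id{} + b\textbf{e}_{\gamma\beta}]$ and the additivity of shears in a fixed position. You merely organize the ``easy check'' as an explicit induction and spell out the point (left implicit in the paper) that $\Gamma_{k+1}(SL_n(\mathfrak q))$, being characteristic in the normal subgroup $SL_n(\mathfrak q)$, is normal in $SL_n(A)$ and hence absorbs the whole normal closure of the shears.
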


\begin{proof}
The filtration $SL_n(\mathfrak q^*)$ is strongly central on $SL_n(\mathfrak q)$, so it contains its lower central series. 
Conversely, using that:
\[\left\{
\begin{array}{ll}
\Id{} + (a+b)\textbf{e}_{\alpha\beta} = (\Id{} + a\textbf{e}_{\alpha\beta})(\Id{} + b\textbf{e}_{\alpha\beta}) &\text{if } \alpha \neq \beta, \\
\Id{} + ab\textbf{e}_{\alpha\beta} = [\Id{} + a\textbf{e}_{\alpha\gamma}, \Id{} + b\textbf{e}_{\gamma\beta}] &\text{if }\alpha, \beta \text{ and } \gamma \text{ are pairwise distinct},\\
\end{array}
\right.\]
one can easily check that for any $t$ in $\mathfrak q^k$ and any $\alpha \neq \beta$, if $n \geq 5$: 
\[\Id{} + t \textbf{e}_{\alpha\beta} \in \Gamma_k(SL_n(\mathfrak q)).\]
Using the result from \cite{BMS}, we see that these generate $SL_n(\mathfrak q^k)$ as a normal subgroup of $SL_n(A)$. Hence $SL_n(\mathfrak q^k) \subseteq \Gamma_k(SL_n(\mathfrak q))$, as required.
\end{proof}

\begin{rmq}[On the $q$-torsion Andreadakis problem for $\Z^n$]
Fix an integer $q$. A \emph{$q$-torsion} strongly central filtration is a strongly central filtration $G_*$ such that $G_i^q \subseteq G_{i+1}$ for every $i \geq 1$ (this means exactly that $q\Lie(G_*) = 0$). On any group $G$, there is a minimal $q$-torsion strongly central filtration $\Gamma^{(q)}_*(G)$. Moreover, $\mathcal A_*(G_*)$ is $q$-torsion whenever $G_*$ is, because $\Lie(\mathcal A_*(G_*))$ embeds into a $q$-torsion Lie algebra \emph{via} the Johnson morphism. Let us denote $\mathcal A_*(\Gamma^{(q)}_*(G))$ by $\mathcal A^{(q)}_*(G)$ and $\mathcal A^{(q)}_1(G)$ by $IA^{(q)}(G)$ (it is the group of automorphisms acting trivially on $G^{ab} \otimes (\Z/q)$). Thus we get an inclusion of $\Gamma^{(q)}_*(IA^{(q)})$ into $\mathcal A_*^{(q)}$ and a corresponding \emph{$q$-torsion Andreadakis problem}.

Apply this with $G =\Z^n$ and $q \geq 3$. Then $\Gamma^{(q)}_*(G) = q^*\Z$, $\Aut(G) = GL_n(\Z)$, and $\mathcal A^{(q)}_*(G) = GL_n(q^*\Z) = SL_n(q^*\Z)$
Proposition \ref{LCS_of_sl_n} then gives an answer to the $q$-torsion Andreadakis problem for $\Z^n$ in the stable range ($n \geq 5$) : $\mathcal A^{(q)}_*(\Z^n)$ is the lower central series of $IA^{(q)}(\Z^n) = SL_n(q\Z)$.
\end{rmq}

When $A = \Z$ and $\mathfrak q = (q)$, the graded ring $\gr(q^*\Z)$ is $(\Z/q)[t]$. Moreover, $GL_n(q\Z) = SL_n(q\Z)$ (see Remark \ref{SL_n_et_GL_n}). Proposition \ref{LCS_of_sl_n} and Corollary \ref{gr_of_sl_n} thus give:
\begin{cor}\label{Lie(GL_n(qZ))}
For all $n \geq 5$ and all $q \geq 3$, there is a canonical isomorphism of graded Lie rings (in degrees at least one):
\[\Lie(GL_n(q\Z)) \cong  \mathfrak{sl}_n(\Z/q)[t],\]
where the degree of $t$ is $1$, and the Lie bracket of $Mt^i$ and $Nt^j$ is $[M,N]t^{i+j}$.
\end{cor}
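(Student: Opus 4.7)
The plan is to assemble the corollary directly from the machinery already established in this subsection. The strategy is: reduce $\GL_n(q\Z)$ to $SL_n(q\Z)$, identify its lower central series with the congruence filtration $SL_n((q\Z)^{\,*})$, apply the determinant-trace calculation, and finally compute $\gr((q\Z)^{\,*})$.

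First I would argue that $\GL_n(q\Z) = SL_n(q\Z)$. By Remark \ref{SL_n_et_GL_n} this follows as soon as $GL_1(q\Z) = \{1\}$; but $GL_1(q\Z) = \Z^{\times} \cap (1+q\Z) = \{\pm 1\} \cap (1+q\Z)$, and $-1 \in 1 + q\Z$ forces $q \mid 2$, which is excluded by the hypothesis $q \geq 3$. Next, I would invoke Proposition \ref{LCS_of_sl_n} applied to $A = \Z$, $\mathfrak{q} = (q)$ (the relevant Bass--Milnor--Serre normal generation hypothesis holds for $\Z$): this gives
\[
\Gamma_*\bigl(SL_n(q\Z)\bigr) \;=\; SL_n\bigl((q\Z)^{\,*}\bigr),
\]
where $n \geq 5$ is used. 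Hence $\Lie\bigl(\GL_n(q\Z)\bigr) = \Lie\bigl(SL_n((q\Z)^{\,*})\bigr)$ as graded Lie rings in positive degree.

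Then I would apply Corollary \ref{gr_of_sl_n} to the filtered commutative algebra $A_* = (q\Z)^{\,*}$ (viewed inside $\Z$, with $A_0 = \Z$ and $A_j = q^{j}\Z$), obtaining a canonical isomorphism
\[
\Lie\bigl(SL_n((q\Z)^{\,*})\bigr) \;\cong\; \mathfrak{sl}_n\bigl(\gr((q\Z)^{\,*})\bigr).
\]
Finally, I would identify $\gr((q\Z)^{\,*})$ with $(\Z/q)[t]$: in degree $j \geq 1$ the quotient $q^{j}\Z / q^{j+1}\Z$ is freely generated over $\Z/q$ by the class of $q^{j}$, which we rename $t^{j}$; multiplication $q^{i}\Z \cdot q^{j}\Z \subseteq q^{i+j}\Z$ descends to $t^{i}\cdot t^{j} = t^{i+j}$ on associated gradeds, so this is a genuine graded-ring isomorphism. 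Chasing through the definition of $\mathfrak{sl}_n$ over a graded ring, the bracket becomes $[M t^{i}, N t^{j}] = (MN - NM)\, t^{i+j} = [M,N]\, t^{i+j}$, exactly as stated.

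There is really no hard step left: all the substantial work sits in Proposition \ref{LCS_of_sl_n} (which supplies the equality of lower central series with the congruence filtration via the normal generation result of \cite{BMS}) and in Proposition \ref{det_and_tr} / Corollary \ref{gr_of_sl_n} (which give the trace-kernel description of the graded Lie ring). The only point requiring a moment of care is verifying $GL_1(q\Z)=\{1\}$ for $q\geq 3$, which is what pins us to $SL_n$ rather than $\GL_n$; once that observation is made, the statement is a direct concatenation of the three ingredients above.
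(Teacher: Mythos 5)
Your proof is correct and follows essentially the same route as the paper: identify $\GL_n(q\Z)$ with $SL_n(q\Z)$ via Remark \ref{SL_n_et_GL_n}, use Proposition \ref{LCS_of_sl_n} to equate the lower central series with the congruence filtration $SL_n(q^*\Z)$, apply Corollary \ref{gr_of_sl_n}, and compute $\gr(q^*\Z) \cong (\Z/q)[t]$. The only difference is that you spell out the verifications (e.g.\ $GL_1(q\Z)=\{1\}$ for $q\geq 3$) that the paper leaves implicit.
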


\begin{rmq}
This generalizes \cite[Th.\ 1.1]{Lee-Szczarba}, which is the degree-one part.
\end{rmq}

\begin{rmq}
For $n =2$ and $q \geq 5$ a prime number, the group $SL_2(q\Z)$ is free on $1+ q(q^2-1)/12$ generators \cite{Grosswald, Frasch}. Its Lie ring is then a free Lie ring on the same number of generators. The author does not know a complete calculation for $n = 3$ or $4$ : the above calculus does give the abelianization, but it fails to determine the whole lower central series.
\end{rmq}

\subsection{Comparison between filtrations obtained from an action}

Let $G$ be a group. Suppose that $G$ acts on two strongly central series $H_*$ et $K_*$ (by automorphisms in $\mathcal{SCF}$). We then can ask what link exists between $\mathcal A_*(G,H_*)$ and  $\mathcal A_*(G,K_*)$ (as defined in Remark \ref{A_on_K}), depending on the links between $H_*$ an $K_*$. 

The next proposition describes the behaviour of the construction $\mathcal A_*(G,-)$ with respect to injections, surjections (see Definition \ref{def_inj-surj}) and semi-direct products in the category $G-\mathcal{SCF}$ of strongly central series endowed with a $G$-action (where morphisms respect this action):

\begin{prop}\label{inj-surj} Let $G$ be a group acting on strongly central series $N_*$, $H_*$ and $K_*$. Let $u: N_* \longrightarrow H_*$ and $v: H_* \longrightarrow K_*$ be $G$-equivariant morphisms.

\smallskip

If $u: N_* \longrightarrow H_*$ is an injection, then $\mathcal A_*(G,H_*) \subseteq \mathcal A_*(G,N_*)$.

If $v: H_* \longrightarrow K_*$ is a surjection, then $\mathcal A_*(G,H_*) \subseteq \mathcal A_*(G,K_*)$.

If  $\begin{tikzcd}[column sep=small]
                   N_* \ar[r, hook] & H_* \ar[r, two heads] & K_* \ar[l, bend right]
                  \end{tikzcd}$
is a split exact sequence in $G-\mathcal{SCF}$, then:
\[\mathcal A_*(G,H_*) = \mathcal A_*(G,K_*) \cap \mathcal A_*(G,N_*).\]
\end{prop}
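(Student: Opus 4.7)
The plan is to unfold the explicit description from Remark \ref{A_on_K}, namely
\[\mathcal A_j(G, L_*) = \{g \in G \mid \forall i \geq 1,\ [g, L_i] \subseteq L_{i+j}\},\]
and transport commutators through the $G$-equivariant morphisms $u$, $v$ and the splitting. The key observation used throughout is that for any $G$-equivariant group homomorphism $\varphi$, any $g \in G$ and any $x$ in the source, $\varphi([g,x]) = [g, \varphi(x)]$, which follows directly from $\varphi(g \cdot x) = g \cdot \varphi(x)$ together with the multiplicativity of $\varphi$.

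For the first statement, I would take $g \in \mathcal A_j(G, H_*)$ and $n \in N_i$: then $u([g,n]) = [g, u(n)] \in H_{i+j}$, and since $u$ is an injection, Definition \ref{def_inj-surj} gives $u^{-1}(H_{i+j}) = N_{i+j}$, hence $[g,n] \in N_{i+j}$. For the second, given $g \in \mathcal A_j(G, H_*)$ and $y \in K_i$, the surjectivity of $v$ provides some $x \in H_i$ with $v(x) = y$, and then $[g,y] = v([g,x]) \in v(H_{i+j}) = K_{i+j}$. The inclusion $\mathcal A_*(G, H_*) \subseteq \mathcal A_*(G, K_*) \cap \mathcal A_*(G, N_*)$ in the third statement then follows at once by applying these two results to the morphisms $u$ and $v$ of the exact sequence.

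For the reverse inclusion of the third statement, the plan is to decompose every element of $H_i$ as a product of a piece coming from $N_i$ and a piece from the image of $K_i$. Since the splitting $s$ is a morphism in $G\text{-}\mathcal{SCF}$ (this being implicit in the phrase ``split exact sequence in $G\text{-}\mathcal{SCF}$''), Lemma \ref{sdprod} yields the semi-direct product decomposition $H_i = u(N_i) \cdot s(K_i)$. Given $g \in \mathcal A_j(G, K_*) \cap \mathcal A_j(G, N_*)$ and $x = u(n) \, s(k) \in H_i$, the identity $[x, yz] = [x, y] \cdot {}^y[x, z]$ from Proposition \ref{formules} gives
\[[g,x] = u([g,n]) \cdot {}^{u(n)} s([g,k]).\]
The first factor lies in $u(N_{i+j}) \subseteq H_{i+j}$ by hypothesis; the second lies in $H_{i+j}$ because the $G$-equivariance of $s$ forces $[g,s(k)] = s([g,k]) \in s(K_{i+j}) \subseteq H_{i+j}$ and $H_{i+j}$ is normal in $H_1$. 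The only substantive input beyond routine commutator manipulation is the $G$-equivariance of the splitting (which justifies the decomposition of the commutator into a ``$N$-part'' and a ``$K$-part''); everything else is bookkeeping with the formulas of Proposition \ref{formules}.
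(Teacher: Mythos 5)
Your proof is correct and follows essentially the same route as the paper: the first two inclusions are obtained by transporting commutators through $u$ and $v$ exactly as in the paper's proof, and your direct commutator computation $[g, u(n)s(k)] = u([g,n]) \cdot {}^{u(n)}s([g,k])$ for the reverse inclusion of the third part is just an unwound version of the paper's observation that $g$ acts trivially on $H_i/H_{i+j} \cong N_i/N_{i+j} \rtimes K_i/K_{i+j}$ if and only if it acts trivially on each factor. The only cosmetic difference is that you work with commutators in $H_i$ rather than passing to the quotients $H_i/H_{i+j}$.
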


\begin{proof}
In the first case, we identify $N = N_1$ to a subgroup of $H = H_1$. Let $g \in \mathcal A_j(G,H_*)$. We write:
\[[g,N_i] = [g, N \cap H_i] \subseteq [g,N] \cap [g, H_i] \subseteq N \cap H_{i+j} = N_{i+j}.\]
Similarly, to show the second assertion, let us take $g \in A_j(G,H_*)$. We write:
\[[g,K_i] = [g, v(H_i)] = \varphi([g, H_i]) \subseteq v(H_{i+j}) = K_{i+j}.\]
The third assertion's hypothesis comes down to require that $H_*$ decompose as a semi-direct product $K_* \rtimes N_*$, the action of $G$ on $H_*$ being factor-wise. We then get $G$-equivariant isomorphisms:
\[H_i/H_{i+j} \cong K_i/K_{i+j} \rtimes N_i/N_{i+j}.\]
An element $g$ of $G$ acts trivially on the left hand side if and only if it does on the right hand side. Whence the result.
\end{proof}

\begin{rmq}
Let $\begin{tikzcd}[column sep=small]
                  1 \ar[r] & N_* \ar[r, "u"] & H_* \ar[r, "v"] & K_* \ar[r] & 1
                  \end{tikzcd}$ be a non-split short exact sequence in $G-\mathcal{SCF}$, the first part of Proposition \ref{inj-surj} gives:
\[\mathcal A_*(G,H_*) \subseteq \mathcal A_*(G,K_*) \cap \mathcal A_*(G,N_*).\]
Nevertheless, equality is not true in general. Indeed, the sequences:
\[\begin{tikzcd}[column sep=small]
                  1 \ar[r] & N_i/N_{i+j} \ar[r, "u"] & H_i/H_{i+j} \ar[r, "v"] & K_i/K_{i+j} \ar[r] & 1
                  \end{tikzcd}\]
are exact, but $g \in G$ can act trivially on the kernel and quotient without acting trivially on the middle term. For instance, $H^1(G) = \Ext^1_G(\Z^{triv}, \Z^{triv})$ is non-trivial in general.
\end{rmq}


We can get a little more about semi-direct products:

\begin{prop}\label{transitivity_of_A}
Let  $\begin{tikzcd}[column sep=small]
                   N \ar[r, hook] & H \ar[r, two heads] & K \ar[l, bend right]
                  \end{tikzcd}$
be a split exact sequence in $G - \mathcal Grps$. Suppose that $N$ is filtered by a strongly central series $N_*$. Then:
\[\mathcal A_*(G,N_*) \subseteq \mathcal A_*\left(G,\mathcal A_*(K, N_*)\right).\]
\end{prop}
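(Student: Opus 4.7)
The plan is to unfold the definitions and reduce the inclusion to a single application of the $3$-subgroup lemma. Unravelling Remark~\ref{A_on_K}, the desired statement $\mathcal A_*(G, N_*) \subseteq \mathcal A_*(G, \mathcal A_*(K, N_*))$ becomes the following concrete claim: for every $g \in \mathcal A_j(G, N_*)$ and every $k \in \mathcal A_i(K, N_*)$, the commutator $[g, k] = ({}^g k)\cdot k^{-1}$, computed inside $H \rtimes G$ and lying in $K$, satisfies $[[g, k], N_l] \subseteq N_{l+i+j}$ for all $l \geq 1$.

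Before invoking the lemma, I would record a preparatory observation: every element of $\mathcal A_j(G, N_*)$ preserves each $N_m$ as a set, since $g(n) = [g, n] \cdot n \in N_{m+j} \cdot N_m \subseteq N_m$ for $n \in N_m$, and $g^{-1}$ enjoys the same property because $\mathcal A_j(G, N_*)$ is a subgroup; the analogous statement holds for $k^{\pm 1} \in \mathcal A_i(K, N_*)$. Consequently, inside the ambient group $L := \langle g, k, N \rangle \subseteq H \rtimes G$, each generator normalizes $N_{l+i+j}$ (which is already normal in $N$ by strong centrality of $N_*$), so $N_{l+i+j}$ is normal in $L$.

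The core of the argument is then a single application of Lemma~\ref{lem3sgrp} in $L$, with $A = \langle g \rangle$, $B = \langle k \rangle$, $C = N_l$. Two of the three triple commutators are immediately controlled by the hypotheses:
\[
[A, [B, C]] \subseteq [g, N_{l+i}] \subseteq N_{l+i+j}, \qquad [B, [C, A]] \subseteq [k, N_{l+j}] \subseteq N_{l+i+j}.
\]
The lemma then places $[C, [A, B]] = [N_l, [g, k]]$ in the normal closure (computed in $L$) of these two subgroups, which is contained in $N_{l+i+j}$ by the normality established above. This yields the required bound $[[g, k], N_l] \subseteq N_{l+i+j}$.

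The only delicate step is the normality issue at the end of the preparatory paragraph: applied naively in $H \rtimes G$, the $3$-subgroup lemma would only put $[N_l, [g,k]]$ in the normal closure of $N_{l+i+j}$ in all of $H \rtimes G$, and this is typically strictly larger than $N_{l+i+j}$ since arbitrary elements of $G$ or $K$ need not preserve $N_*$. Restricting to the smaller ambient subgroup $L$, whose generators we have just shown do preserve $N_*$, is exactly what makes $N_{l+i+j}$ genuinely normal and lets the argument close.
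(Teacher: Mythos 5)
Your proof is correct and follows essentially the same route as the paper: a single application of the $3$-subgroup lemma inside the semidirect product $(N \rtimes K) \rtimes G$, with the two controllable triple commutators supplied directly by the hypotheses on $g$ and $k$. The normality point you isolate at the end is exactly what the paper's ``straightforward application'' leaves implicit (and handles the same way in the proof of Theorem \ref{Kaloujnine}, where the relevant term is normal because it is stable under all the subgroups involved), so your restriction to the subgroup generated by $g$, $k$ and $N$ is a careful spelling-out of the same argument rather than a different one.
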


\begin{proof}
Let us denote by $K_*$ the filtration $\mathcal A_*(K, N_*)$ and by $G_*$ the filtration $\mathcal A_*(G, N_*)$. A straightforward application of the 3-subgroup lemma (Lemma \ref{lem3sgrp}) in $(N \rtimes K) \rtimes G$ provides the inclusion:
\[[[G_\alpha, K_\beta], N_\gamma] \subseteq N_{\alpha + \beta +\gamma}.\]
Thus $[G_\alpha, K_\beta] \subseteq \mathcal A_{\alpha + \beta}(K, N_*) = K_{\alpha + \beta}$, 
which means that $G_\alpha \subseteq \mathcal A_\alpha (G,K_*)$.
\end{proof}

\begin{cor}\label{incl_de_A}
Let $G$ be a group acting on a filtered algebra $A_*$ by automorphisms of filtered algebras. Then:
\[\mathcal A_*(G,A_*) \subseteq \mathcal A_*(G,A^\times_*).\]
\end{cor}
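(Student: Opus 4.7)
The plan is to recognize $A^\times_*$ as itself the Kaloujnine-type filtration $\mathcal A_*(A^\times, A_*)$ attached to the left-multiplication action of $A^\times$ on $A$, and then to invoke Proposition \ref{transitivity_of_A} directly. Concretely, the proof of Lazard's theorem \ref{Lazard} shows that $A^\times_j = A^\times \cap (1+A_j)$ is precisely $\rho^{-1}(\mathcal A_j(A_*))$, where $\rho : A^\times \to \Aut(A,+)$ is left multiplication; this is the content, in the language of Remark \ref{A_on_K}, of the identification $A^\times_* = \mathcal A_*(A^\times, A_*)$.

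Next I would assemble the split exact sequence in $G$-$\mathcal Grps$ needed to apply Proposition \ref{transitivity_of_A}. Consider the semi-direct product $H = A \rtimes A^\times$ encoding the left-multiplication action. Because $G$ acts on $A$ by filtered \emph{algebra} automorphisms, $G$ preserves both the additive structure on $A$ (and its filtration) and the multiplicative group $A^\times$, and the two actions are compatible: $g\cdot(ua) = (g\cdot u)(g\cdot a)$. Consequently $G$ acts by automorphisms on $H$, and the canonical sequence
\[
A \ \hookrightarrow\ H\ \twoheadrightarrow\ A^\times
\]
is split exact in $G$-$\mathcal Grps$. Applying Proposition \ref{transitivity_of_A} with $N = A$, $N_* = A_*$, $K = A^\times$ yields
\[
\mathcal A_*(G, A_*) \ \subseteq\ \mathcal A_*\!\bigl(G, \mathcal A_*(A^\times, A_*)\bigr) \ =\ \mathcal A_*(G, A^\times_*),
\]
which is exactly the claim.

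The only genuine obstacle is the bookkeeping around indexing: the paper's convention for filtered algebras starts at $A_0 = A$, whereas strongly central filtrations start at index $1$. This is the same subtlety dealt with in the proof of Lazard's theorem, and there it is absorbed by reading the filtration $A \supseteq A_1 \supseteq A_2 \supseteq \cdots$ as a filtration of the abelian group $A$ in the Kaloujnine sense. Once this identification is made, all the hypotheses of Proposition \ref{transitivity_of_A} are satisfied and the inclusion is immediate.

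As a sanity check, a direct calculation reproduces the same fact: for $g \in \mathcal A_j(G, A_*)$ and $u = 1 + a \in A^\times_i$ with $a \in A_i$, the commutator in $A^\times \rtimes G$ is
\[
[g,u] \ =\ (g\cdot u)u^{-1}\ =\ (1 + g\cdot a)(1+a)^{-1} \ =\ 1 + (g\cdot a - a)(1+a)^{-1},
\]
and since $g\cdot a - a \in A_{i+j}$ and $A_{i+j}$ is a two-sided ideal, we obtain $[g,u] \in 1+A_{i+j} = A^\times_{i+j}$. This both confirms the abstract argument and shows that the algebra-automorphism hypothesis is used only through the facts that $g$ fixes $1$ and commutes with multiplication on the nose.
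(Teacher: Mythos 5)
Your proof is correct and follows essentially the same route as the paper: the paper's own argument is precisely to apply Proposition \ref{transitivity_of_A} to the $G$-equivariant split exact sequence $A \hookrightarrow A \rtimes A^\times \twoheadrightarrow A^\times$ with the given filtration on $A$, using (implicitly) the identification $A^\times_* = \mathcal A_*(A^\times, A_*)$ established in the proof of Lazard's theorem. Your concluding direct computation of $[g,u]$ is a correct bonus but not needed.
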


\begin{proof}
Apply Proposition \ref{transitivity_of_A} to the $G$-equivariant split exact sequence:
\[\begin{tikzcd}[column sep=small]
                   A \ar[r, hook] & A \rtimes A^\times \ar[r, two heads] & A^\times \ar[l, bend right]
                  \end{tikzcd}\]
and the given filtration on $A$.
\end{proof}

Here is an interesting case when the filtrations of Corollary \ref{incl_de_A} are equal:
\begin{prop}\label{égalité_de_A}
Let $G$ be a group, and $\k$ a commutative ring. Then:
\[\mathcal A_* \left(\Aut(G), D_*^\k G\right) = \mathcal A_* \left(\Aut(G), I_\k^*G\right).\]
\end{prop}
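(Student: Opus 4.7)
The plan is to prove the two inclusions separately. For $\mathcal A_*(\Aut(G), I^*_\k G) \subseteq \mathcal A_*(\Aut(G), D_*^\k G)$, the machinery already developed suffices: Corollary \ref{incl_de_A} applied to $\k G$ (filtered by powers of its augmentation ideal) gives $\mathcal A_*(\Aut(G), I^*_\k G) \subseteq \mathcal A_*(\Aut(G), (\k G)^\times_*)$, and the inclusion $G \hookrightarrow (\k G)^\times$ is an $\Aut(G)$-equivariant injection in $\mathcal{SCF}$ which, by Remark \ref{Lazard_on_G}, identifies $D_*^\k G = G \cap (\k G)^\times_*$ as the pullback filtration. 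The first assertion of Proposition \ref{inj-surj} then produces $\mathcal A_*(\Aut(G), (\k G)^\times_*) \subseteq \mathcal A_*(\Aut(G), D_*^\k G)$, which closes this direction.

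For the reverse inclusion I would argue directly. Take $\sigma \in \mathcal A_j(\Aut(G), D_*^\k G)$ and extend it to a ring automorphism of $\k G$ denoted by the same letter; the aim is to show $\sigma(x) - x \in I^{i+j}$ for every $x \in I^i$ and every $i \geq 1$. For the base case $i = 1$, if $g \in G = D_1^\k G$ then the hypothesis yields $\sigma(g) g^{-1} \in D_{j+1}^\k G \subseteq 1 + I^{j+1}$, whence $\sigma(g) - g = (\sigma(g) g^{-1} - 1)\, g \in I^{j+1}$; since $I$ is $\k$-linearly generated by $\{g - 1 : g \in G\}$ and $\sigma - \mathrm{id}$ is $\k$-linear, the conclusion extends to all of $I$. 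For the induction step, a typical generator of $I^i$ is $(g-1) y$ with $g \in G$ and $y \in I^{i-1}$, and one has
\[\sigma((g-1)y) - (g-1)y = (\sigma(g) - g)\,\sigma(y) + (g-1)\bigl(\sigma(y) - y\bigr).\]
The first summand lies in $I^{j+1} \cdot I^{i-1} \subseteq I^{i+j}$ (using the base case and the fact that $\sigma$ preserves $I^{i-1}$, as a ring automorphism of $\k G$), and the second lies in $I \cdot I^{i-1+j} \subseteq I^{i+j}$ by the inductive hypothesis. $\k$-linearity then propagates the estimate to arbitrary elements of $I^i$.

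What I would call the main subtlety, rather than a genuine obstacle, lies in choosing the correct generating set for $I^i$. It would be tempting to try to generate $I^i$ using $\{g - 1 : g \in D_i^\k G\}$, but this would essentially require the dimension subgroup conjecture, which is false in general (see Example \ref{dimension}). The induction above neatly sidesteps this by relying only on the elementary fact that $I^i$ is multiplicatively generated by $i$-fold products of elements $g - 1$ with $g \in G = D_1^\k G$, so that the verification ultimately reduces to the easy base case on $G$ itself.
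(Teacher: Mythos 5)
Your proof is correct and follows essentially the same strategy as the paper: the reverse (hard) inclusion is exactly the paper's argument, namely the observation that $\varphi \mapsto \varphi(x)-x$ is an $(\Id{},\varphi)$-derivation together with an induction on $i$ reducing everything to the degree-one case $g \in G = D_1^\k G$. For the easy inclusion you route through Corollary \ref{incl_de_A} and the injection $D_*^\k G \hookrightarrow (\k G)^\times_*$ via Proposition \ref{inj-surj}, whereas the paper applies Proposition \ref{transitivity_of_A} directly to $\k G \rtimes G$ using $\mathcal A_*(G, I_\k^*G) = D_*^\k G$; both rest on the same transitivity lemma, so the difference is cosmetic.
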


\begin{proof}
The algebra $\Z G$ is filtered by $I_\k^*G$, the powers of its augmentation ideal. The group $\Aut(G)$ acts on $\Z G$ by automorphisms of filtered algebras.
As $\mathcal A_*(G, I_\k^*G) = D_*^\k G$, Proposition \ref{transitivity_of_A}, applied to the action of $\Aut(G)$ on $\k G \rtimes G$, gives an inclusion:
\[\mathcal A_*(\Aut(G), I_\k^*G) \subseteq \mathcal A_*(\Aut(G), D_*^\k G).\]
To show that it is in fact an equality, take $\varphi \in \mathcal A_*(\Aut(G), D_*^\k G)$. Then:
\[\forall g \in G = D_1G,\ \ [\varphi, g] = (\varphi(g)g^{-1} - 1)g \in (D_{j+1} - 1)G \subseteq I^{j+1}.\]
We then show that $[\varphi, I^i] \subseteq I^{i+j}$ by induction on $i \geq 1$,
using the formula:
\[[\varphi,uv] = [\varphi,u]\varphi(v) + u[\varphi,v]\ \ \left(=(\varphi(u)-u) \cdot \varphi(v) + u \cdot (\varphi(v)-v)\right).\]
Let us remark that, in the language of paragraph \ref{alg_actions_and_der}, the last formula states that $[\varphi,-]$ is a $(\Id{},\varphi)$-derivation, so we have in fact used lemma \ref{degree_of_derivations}.
\end{proof}

\section{Traces and stable surjectivity}\label{Section_stable_surj}

\subsection{Free differential calculus}

We recall some basic concepts of free differential calculus. A detailed account can be found in \cite{Fox}.

\begin{defi}\label{def_derivations_ab}
Let $G$ be a group, and $M$ a $\k G$-module. A \emph{derivation} from $G$ to $M$ is a map $\partial: G \longrightarrow M$ such that:
\[\forall g, h \in G,\ \partial(gh) = \partial g + g \cdot \partial h.\]
It can be extended to a linear map $\partial: \k G \longrightarrow M$, which verifies:
\[\forall u, v \in \k G,\ \partial(uv) = \partial (u) \varepsilon (v) + u \cdot \partial (v).\]
We denote by $\Der(G,M)$ or $\Der(\k G, M)$ the space of derivations from $G$ to $M$. We will often write $\Der(\k G)$ for $\Der(\k G, \k G)$. 
\end{defi}

\begin{rmq}
Let $G = F_S$ be the free group over a set $S$. As derivations identify with sections of $M \rtimes G\twoheadrightarrow G$, we get:
$\Der(\k F_S, M) \cong M^S$ for any $\k G$-module $M$.
\end{rmq}

\begin{defi}
Let $S = (x_i)_i$ be a chosen basis of a free group $F$. The following requirement defines a derivation of $\k F$:
\[\frac{\partial}{\partial x_i}: x_t \longmapsto \begin{cases}
                                               1 &\text{ if } t = i, \\
						                       0 &\text{ else.}    
													 \end{cases}\]
\end{defi}

Let us give a first version of the chainrule:

\begin{prop}\label{chainrule}
Let
$\lambda: F_Y \longrightarrow G $ a group morphism, where $F_Y$ is the free group on a set $Y = \{y_j\}$.
Then, for $u$ in $\k Y$ and $\partial$ in $\Der(\k G)$: 
\[\partial (\lambda u) = \sum\limits_j \lambda \left( \frac{\partial u}{\partial y_j} \right) \partial (\lambda y_j).\]
\end{prop}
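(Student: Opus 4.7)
The plan is to verify the identity by checking that both sides define the same derivation $F_Y \to \k G$, where $\k G$ is viewed as a $\k F_Y$-module through $\lambda$. Since both sides are $\k$-linear in $u$ (using the convention that $u$ ranges over $\k F_Y$, extending the Fox derivatives linearly as in Definition \ref{def_derivations_ab}), it suffices to establish the equality for $u \in F_Y$.

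First I would check that the left-hand side $\delta_L: u \mapsto \partial(\lambda u)$ is a derivation from $F_Y$ to $\k G$ with the module structure given by $\lambda$: for $u, v \in F_Y$, one has
\[\delta_L(uv) = \partial(\lambda(u)\lambda(v)) = \partial(\lambda u) + \lambda(u) \cdot \partial(\lambda v) = \delta_L(u) + \lambda(u)\cdot \delta_L(v),\]
using only that $\partial \in \Der(\k G)$ and that $\lambda$ is a group morphism. Then I would check the same for the right-hand side $\delta_R: u \mapsto \sum_j \lambda\bigl(\tfrac{\partial u}{\partial y_j}\bigr)\, \partial(\lambda y_j)$. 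This reduces to the Leibniz rule for each Fox derivative $\partial/\partial y_j$ applied at $uv$, namely $\tfrac{\partial (uv)}{\partial y_j} = \tfrac{\partial u}{\partial y_j} + u \cdot \tfrac{\partial v}{\partial y_j}$, followed by applying $\lambda$ and summing; the cross-term produces precisely $\lambda(u)\cdot \delta_R(v)$.

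Next, since $F_Y$ is free on $Y$, a derivation from $F_Y$ into any $\k F_Y$-module is determined by its values on the generators $y_i$ (this is the remark following Definition \ref{def_derivations_ab}, identifying derivations with sections). It thus remains only to compare $\delta_L$ and $\delta_R$ on each $y_i$:
\[\delta_L(y_i) = \partial(\lambda y_i), \qquad \delta_R(y_i) = \sum_j \lambda\!\left(\tfrac{\partial y_i}{\partial y_j}\right)\partial(\lambda y_j) = \partial(\lambda y_i),\]
by definition of $\partial/\partial y_j$. Hence $\delta_L = \delta_R$ on $F_Y$, and by $\k$-linearity on all of $\k F_Y$.

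No serious obstacle is expected; the only subtle point is keeping track of the module structure on $\k G$ (via $\lambda$) so that the Leibniz rules for $\partial$ and for the Fox derivatives $\partial/\partial y_j$ combine correctly. Once both sides are identified as derivations with matching values on a generating set, the uniqueness property of derivations from a free group finishes the argument.
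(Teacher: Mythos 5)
Your proof is correct and is essentially the paper's own argument: both sides are derivations from $F_Y$ into $\k G$ (with the $\k F_Y$-module structure induced by $\lambda$), and derivations out of a free group are determined by their values on the basis, where the two sides visibly agree. You have simply written out the Leibniz-rule verifications that the paper leaves to the reader.
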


\begin{rmq}
The sums involved here are finite, because only a finite number of letters appear in a given element $u$. 
\end{rmq}

\begin{proof}[Proof of proposition \ref{chainrule}]
One can check that each member of this equality defines a derivation from $\k F_Y$ to $\k G$, where $\k F_Y$ acts on $\k G$ by $y \cdot g = \lambda(y)g$. As these formulas give the same result when evaluated at elements of the basis, the corresponding derivations are equal.
\end{proof}

If $G = F_X = \left\langle x_i \right\rangle$ is free too, we can apply proposition \ref{chainrule} with $\partial = \frac{\partial}{\partial x_i}$ to get:
\[\frac{\partial (\lambda u)}{\partial x_i}  = \sum\limits_j \lambda \left( \frac{\partial u}{\partial y_j} \right) \frac{\partial (\lambda y_j)}{\partial x_i}.\]
For $X = Y$ and $\lambda = \Id{F_X}$, this gives a change-of-base formula similar to the usual one.

\medskip

The following definition keeps on with our analogy with classical differential calculus:

\begin{defi}
Let $F_Y = \left\langle y_j \right\rangle$ and $F_X = \left\langle x_i \right\rangle$ be free groups as above. Let $f$ be a morphism from $F_Y$ to $F_X$. We define its \emph{Jacobian matrix} (with respect to the chosen basis) by:
\[D(f):= \left( \frac{\partial f(y_j)}{\partial x_i} \right)_{ji} \in M_{YX}(\k X).\]
\end{defi}

\begin{rmq}
The morphism $f$ is determined by $Df$. Indeed, proposition \ref{chainrule}, applied with $\lambda = 1$, $\partial: v \longmapsto v - \varepsilon(v)$ and $u = f(x_i)$, gives:
\[f(x_i) - 1 = \sum\limits_j \frac{\partial f(x_i)}{\partial x_j} (x_j - 1).\]
\end{rmq}

Let
$F_Z = \left\langle z_k \right\rangle$, 
$F_Y = \left\langle y_j \right\rangle$ and 
$F_X = \left\langle x_i \right\rangle$ be free groups, and let
$F_Z \overset{g}{\longrightarrow} F_Y \overset{f}{\longrightarrow} F_X$ be morphisms between them. We can use proposition \ref{chainrule} to get a chainrule for Jacobian matrices:

\[\frac{\partial f(g(z_k))}{\partial x_i}  = \sum\limits_j f \left( \frac{\partial g(z_k)}{\partial y_j} \right) \frac{\partial f(y_j)}{\partial x_i}.\]
This can be restated as:
\begin{cor}\label{Dérivation_composée}
Let $F_Z \overset{g}{\longrightarrow} F_Y \overset{f}{\longrightarrow} F_X$ be morphisms between free groups with fixed basis, as above. Then:
\[D(fg) = f(Dg)D(f).\]
\end{cor}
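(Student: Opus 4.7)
The plan is essentially to unpack the scalar identity displayed just before the corollary as a matrix product, so the work is entirely bookkeeping with indices. I would first recall the conventions: with $F_Z = \langle z_k\rangle$, $F_Y = \langle y_j\rangle$, $F_X = \langle x_i\rangle$, the Jacobian matrix $D(fg)$ has entries indexed by $(k,i) \in Z \times X$, the matrix $Dg$ is indexed by $(k,j) \in Z \times Y$, and $Df$ is indexed by $(j,i) \in Y \times X$. Applying the morphism $f: \k F_Y \to \k F_X$ entrywise to $Dg$ gives a matrix $f(Dg)$ still indexed by $(k,j)$, so the product $f(Dg)\cdot Df$ makes sense and lies in $M_{ZX}(\k F_X)$, matching $D(fg)$.

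Next, I would apply the chain rule (Proposition \ref{chainrule}) in the specific case $\lambda = f$, $u = g(z_k) \in \k F_Y$, and $\partial = \partial/\partial x_i \in \Der(\k F_X)$. This gives directly
\[
\frac{\partial\,(fg)(z_k)}{\partial x_i} \;=\; \sum_{j} f\!\left(\frac{\partial g(z_k)}{\partial y_j}\right)\,\frac{\partial f(y_j)}{\partial x_i},
\]
which is precisely the equation displayed above the corollary.

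The final step is the translation: the left-hand side is $(D(fg))_{ki}$ by definition, while the right-hand side is the $(k,i)$-entry of the matrix product $f(Dg)\cdot Df$, since
\[
\bigl(f(Dg)\cdot Df\bigr)_{ki} \;=\; \sum_{j} \bigl(f(Dg)\bigr)_{kj}\,(Df)_{ji} \;=\; \sum_{j} f\!\left(\frac{\partial g(z_k)}{\partial y_j}\right)\,\frac{\partial f(y_j)}{\partial x_i}.
\]
Equating the two yields $D(fg) = f(Dg)\cdot Df$.

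There is no real obstacle here; the only thing that requires a bit of attention is keeping the order of factors correct (the morphism $f$ acts on the entries of $Dg$ because $\partial g(z_k)/\partial y_j$ lives in $\k F_Y$ and must be pushed into $\k F_X$ before multiplication), and making sure the indexing convention for $D$ is used consistently so that the multiplication reads $f(Dg)\cdot Df$ and not $Df\cdot f(Dg)$. Once those conventions are fixed at the start, the proof reduces to reading off Proposition \ref{chainrule} in matrix form.
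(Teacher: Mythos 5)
Your proposal is correct and is exactly the paper's argument: the paper derives the displayed identity by applying Proposition \ref{chainrule} with $\lambda = f$, $u = g(z_k)$ and $\partial = \partial/\partial x_i$, and the corollary is just its restatement as a matrix product under the convention $D(f) = (\partial f(y_j)/\partial x_i)_{ji}$. Your careful tracking of the index conventions and of the order of the factors matches the paper's.
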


\medskip

\begin{rmq}\label{sens_D(fg)}
The reader may have noticed that this formula seems to come "in the wrong way". This can be explained as follows: a morphism $f: F_Y \longrightarrow F_X$ is in fact a $r$-tuple of monomials $f(y_j) = f_j(x_i) \in \k F_X$, and should as such be considered as a "polynomial function from $F_X$ to $F_Y$", whose coordinates would be the $f_j$s. From this point of view, $fg$ would be a "polynomial function from $F_X$ to $F_Z$", whose coordinates would be given by $fg(z_k) = g_k(f_j(x_i))$: it looks more like "$g \circ f$" !

This seems to be completely analogous to the classical setting of algebraic geometry. To get more accurate statements, one would have to interpret $\k F_X$ as an algebra of functions over a geometric object associated to $F_X$. Which should look like a one-point object with some local structure (as $Df$ determines $f$).
\end{rmq}

\subsection{Derivations and strongly central filtrations}\label{derivations}

We introduce the notion of a derivation from a group $G$ to a group $H$ on which it acts. Our aim here is to describe a general framework which will be useful to study Jacobian matrices and their interactions with Lie brackets: the chainrule formula \ref{Dérivation_composée} tells us that $D$ is a derivation. We will also get back to Lazard's theorem \ref{Lazard} in this framework.

\begin{defi}\label{déf_dérivations}
Let $H$ be a group, on which another group $G$ acts. A map $\partial: G \longrightarrow H$ is a \emph{derivation} if:
\[\forall\ x, y \in G,\ \partial (xy) = \partial x \cdot {}^x\!\partial y.\]
\end{defi}

\begin{rmq}
If $H = M$ is an abelian group, \emph{i.e.\ }a representation of $G$, then we recover the usual definition of a derivation from $G$ to $M$ (see Definition \ref{def_derivations_ab}).
\end{rmq}

To give a derivation $\partial: G \longrightarrow H$ is exactly the same as giving a section $\sigma = (\partial,\Id{G})$ of the canonical projection:
\[\begin{tikzcd}
  H \rtimes G \ar[r, swap, pos=0.4, "p"] &
  G.           \ar[l, bend right, dashed, swap, pos=0.6, "\sigma"]
  \end{tikzcd}\]
Keeping this in mind, the following lemma follows immediately:

\begin{lem} \label{sous-rep}
Let $\partial$ be a derivation from $G$ to $H$. Then $\partial^{-1}$ sends $G$-stable subgroups of $H$ on subgroups of $G$. 
\end{lem}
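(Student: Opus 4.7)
The plan is to exploit the remark immediately preceding the lemma: giving a derivation $\partial: G \to H$ is the same as giving a section $\sigma = (\partial, \mathrm{Id}_G)$ of the projection $p : H \rtimes G \twoheadrightarrow G$, which is a \emph{group homomorphism} from $G$ to $H \rtimes G$. This reframes the statement into a question about preimages under a homomorphism, which is where the subgroup axioms come for free.

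First I would check that for any $G$-stable subgroup $K$ of $H$, the subset $K \rtimes G \subseteq H \rtimes G$ is actually a subgroup: this is immediate, since $G$-stability of $K$ is exactly what is needed to make the semi-direct product $K \rtimes G$ well-defined inside $H \rtimes G$ (the product $(k,g)(k',g') = (k \cdot {}^g\!k', gg')$ stays in $K \rtimes G$, and likewise for inverses). Then I would observe that
\[
\partial^{-1}(K) \;=\; \sigma^{-1}\bigl(K \rtimes G\bigr),
\]
because $\sigma(x) = (\partial x, x)$ lies in $K \rtimes G$ if and only if $\partial x \in K$. Since $\sigma$ is a group homomorphism and $K \rtimes G$ is a subgroup of $H \rtimes G$, its preimage is a subgroup of $G$, which is exactly the claim.

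If one prefers a bare-hands verification avoiding the semi-direct product formalism, one would simply unwind the defining identity $\partial(xy) = \partial x \cdot {}^x\!\partial y$: taking $x = y = 1$ gives $\partial(1) = 1 \in K$; for $x, y \in \partial^{-1}(K)$, the $G$-stability of $K$ gives ${}^x\!\partial y \in K$, hence $\partial(xy) \in K$; and from $1 = \partial(xx^{-1}) = \partial x \cdot {}^x\!\partial(x^{-1})$ we get $\partial(x^{-1}) = {}^{x^{-1}}\!(\partial x)^{-1} \in K$ by $G$-stability again.

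There is no real obstacle here; the only conceptual point is checking that $K \rtimes G$ makes sense as a subgroup, which is precisely the content of the $G$-stability hypothesis. The semi-direct product viewpoint is the conceptually cleanest presentation, so I would favor it, mentioning the direct computation as a sanity check or remark.
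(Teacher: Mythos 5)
Your proof is correct and matches the paper's intended argument: the paper states the lemma as an immediate consequence of the identification of $\partial$ with the section $\sigma = (\partial,\Id{})$ of $H \rtimes G \twoheadrightarrow G$, which is exactly your main route. The direct verification you include as a sanity check is also accurate.
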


\medskip

Let $H_*$ be strongly central filtration on a subgroup $H_1$ of $H$. Let $G_*$ be a strongly central filtration on a subgroup $G_1$ of $G$, which acts on $H_*$ through the given action of $G$ on $H$ (see Proposition \ref{actions_dans_SFC}).
A derivation $\partial$ from $G$ to $H$ being given, we can use the morphism $\sigma = (\partial,\Id{G})$ to pull back the filtration $H_* \rtimes G_*$. We thus get a strongly central filtration on $G_1 \cap \partial^{-1}(H_1)$:
\[\sigma^{-1}(H_i \rtimes G_i) = G_i \cap \partial^{-1}(H_i).\]

\begin{rmq}\label{SCF_sur_G}
For instance, if $H_*$ is given, we can let $G_*$ be $\mathcal A_*(G, H_*)$, the maximal filtration acting on $H_*$, as described in Remark \ref{A_on_K}. The above construction then gives a strongly central filtration on $\mathcal A_1 \cap \partial^{-1}(H_1)$. This subgroup is all of $G$ if and only if:
\begin{equation}
\left\{
\begin{array}{l}
G \text{ stabilises } \cpx H,\\
G \text{ acts trivially on } \Lie(\cpx H),\\
\partial(G) \subseteq H_1.
\end{array}
\right.
\end{equation}
Under these conditions, $\mathcal A_* \cap \partial^{-1}(\cpx H)$ is a strongly central series on $G$. In particular, it then contains $\cpx\Gamma(G)$.
\end{rmq}

\medskip

Keeping the above notations, the morphism $\sigma = (\partial,\Id{G})$ induces a Lie ring morphism (which is injective by definition of the filtration on the domain):
\[\bar\sigma: \Lie(G_* \cap \partial^{-1}(\cpx H)) \longhookrightarrow \Lie(\cpx H) \rtimes \Lie(G_* ).\]
This ensures that $\partial$ induces a well-defined linear map $\bar\partial$ between the Lie algebras. Moreover, the map $\bar\sigma = (\bar\partial, \bar{\Id{}}): x \longmapsto \bar\partial x + x$ preserves Lie brackets, hence:
\[\bar\partial([x,y]) = [\bar\partial x,y] + [x,\bar\partial y] + [\bar\partial x,\bar\partial y].\]
If $\Lie(H_*)$ is an abelian Lie algebra, then the last term is zero, and $\bar\partial$ is a Lie derivation. This happens in particular when $H$ is an abelian group.

\medskip

\begin{proof}[Back to the proof of Lazard's theorem \ref{Lazard}]
Take the filtered (abelian) group $(A,+)$ as $H$, and the group $A^\times$ as $G$ acting by left multiplication $\rho$. We already know that:
\[\mathcal A_j(A^\times, \cpx A) = A^\times_* = A^\times \cap (1 + A_j).\] 
Let $\partial$ be the derivation from $A^\times$ to $A$ defined by $g \longmapsto g-1$. 
Obviously, $\partial^{-1}(A_j) = A^\times_j$, which is exactly $\mathcal A_j(A^\times, \cpx A)$. It is then equal to $\mathcal A_j(A^\times, \cpx A)\cap \partial^{-1}(A_j)$. 

The Lie ring $\Lie(\cpx A)$ is abelian (because $A$ is an abelian group). Hence, the induced map $\bar\partial$ is a derivation (with respect to the canonical action of $\Lie(A^\times_*)$ on $\Lie(A_*)$):
\[\bar\partial: \Lie(A^\times_*) \longrightarrow \Lie(\cpx A).\]
Let us remark that the Lie algebra $\Lie(A_*)$ is quite different from $\gr(A_*)$: the associative structure of $A$ has been completely forgotten. Nevertheless, some part of this structures is encoded by the action of $\Lie(A^\times_*)$, which is inherited from left multiplication. The map $\bar\partial$ is a derivation with respect to this action, that is:
\[\bar\partial([x,y]) = [\bar\partial x,y] + [x,\bar\partial y].\]
These brackets are described through the action of $\Lie(A^\times_*)$ on $\Lie(\cpx A)$, as induced by commutators in $A \rtimes A^\times$:
\[\forall g \in A^\times_1,\ \forall x \in A,\ [g,x] = gx-x.\]
As a consequence:
\[\bar\partial([x,y]) = -(y(x-1) - (x-1)) + (x(y-1) - (y-1)) = xy-yx,\]
so $\bar\partial$ is in fact a Lie morphism to $\gr(A_*)$.
\end{proof}

\subsection{Algebras, actions and derivations}\label{alg_actions_and_der}

We now turn to studying derivations of algebras. In particular, we get a precise link between free differential calculus and differential calculus in the tensor algebra (see Proposition \ref{graded_of_derivations}). We will use this in Paragraph \ref{contraction_map} to get an explicit description of the trace map.

Let $\mathcal Alg_{-}$ be the category of associative non-unitary algebras over a fixed commutative ring $\k$. This category is pointed (by $0$) and protomodular. We can define actions there, as in paragraph \ref{Actions}. Actions in $\mathcal Alg_{-}$ turn out to be representable. Precisely, for any algebra $I$, let $\End_r(I)$ (resp.\ $\End_l(I)$) be the algebra of right (resp.\ left) $I$-linear endomorphisms of $I$, \emph{i.e.\ }$\k$-linear maps $u$ from $I$ to $I$ satisfying:
\[\forall x, y \in I,\ u(xy) = u(x)y \ \ (\text{resp.\ } u(xy) = x u(y)).\]
Define $\End_{r,l}(I)$ as the kernel of :
\[\alpha : \left \{\begin{array}{clc}
                   \End_r(I) \times \End_l(I) &\longrightarrow &\End_{\k}(I) \\
                   (u,v) &\longmapsto &u \circ v - v \circ u.
                   \end{array}
           \right.\]

\begin{prop}
An action $A \circlearrowright I$ in $\mathcal Alg_{-}$ can be represented by a (unique) morphism:
\[A \overset{(\lambda,\rho)} {\longrightarrow} \End_{r,l}(I).\]
\end{prop}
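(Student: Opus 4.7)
The plan is the standard action-representative argument: decode a split extension of non-unitary algebras, extract a pair $(\lambda,\rho)$ of endomorphisms from left and right multiplication by the section, and show this gives a bijection with algebra morphisms into $\End_{r,l}(I)$.

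Given an action $A \circlearrowright I$, i.e.\ a split extension $I \hookrightarrow E \xrightarrow{\pi} A$ with splitting $\sigma: A \to E$, I would first identify $E$ with $I \oplus \sigma(A)$ as a $\k$-module. Since $I$ is a two-sided ideal of $E$, the formulas $\lambda(a)(x) := \sigma(a) \cdot x$ and $\rho(a)(x) := x \cdot \sigma(a)$ define $\k$-linear endomorphisms of $I$, and the entire product on $E$ is determined by these together with the multiplications of $I$ and $A$ via
\[
(x + \sigma(a))(y + \sigma(b)) \;=\; \bigl(xy + \lambda(a)(y) + \rho(b)(x)\bigr) + \sigma(ab).
\]

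Next, I would expand the associativity of this product, collecting terms of each trilinear type in $(x,y,a)$, $(x,a,b)$ and so on. The nontrivial equations that come out are exactly: (i) $\lambda(a)\in \End_r(I)$ and $\rho(a)\in \End_l(I)$, obtained from associativity of $\sigma(a)\cdot x \cdot y$ and $x\cdot y\cdot \sigma(a)$; (ii) $\lambda(ab) = \lambda(a)\lambda(b)$ and $\rho(ab)=\rho(b)\rho(a)$, obtained from $\sigma(a)\sigma(b)\cdot x$ and $x\cdot\sigma(a)\sigma(b)$; and (iii) the commutation $\lambda(a)\circ \rho(b) = \rho(b)\circ\lambda(a)$ for all $a,b\in A$, obtained from $\sigma(a)\cdot x\cdot\sigma(b)$. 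Specialising (iii) to $a=b$ yields $(\lambda(a),\rho(a))\in\ker(\alpha) = \End_{r,l}(I)$, so that we obtain a well-defined map $A \to \End_{r,l}(I)$. Combining this with (ii), and equipping $\End_{r,l}(I)$ with the natural product $(u_1,v_1)(u_2,v_2):=(u_1u_2,\,v_2v_1)$ inherited from $\End_r(I)\times\End_l(I)^{\mathrm{op}}$, the map $a\mapsto(\lambda(a),\rho(a))$ becomes an algebra morphism.

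For the converse, given a morphism $(\lambda,\rho):A \to \End_{r,l}(I)$, I would set $E := I \oplus A$ with the multiplication read off the displayed formula and check directly that it is associative (the verification simply runs the previous computation in reverse, using each of the properties (i)--(iii) as an input rather than a conclusion). The inclusion $I\hookrightarrow E$ is then a kernel and the projection $E\twoheadrightarrow A$ has the obvious set-theoretic section, yielding a split extension in $\mathcal{Alg}_-$. Uniqueness is immediate: the equations $\lambda(a)(x)=\sigma(a)\cdot x$ and $\rho(a)(x)=x\cdot\sigma(a)$ force $(\lambda,\rho)$ from the action, and conversely the product on $E$ is determined by $(\lambda,\rho)$. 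The main obstacle is purely computational bookkeeping: one has to expand the ternary associativity identities completely and match each resulting equation with the corresponding condition defining a morphism into $\End_{r,l}(I)$, taking care that the algebra structure on this object really is the one for which the identification works.
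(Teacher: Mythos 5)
Your proof follows the same route as the paper's own (two-sentence) argument: read $\lambda(a)$ and $\rho(a)$ off the left and right multiplications by $\sigma(a)$ in the middle term of the split extension, observe that the whole product on $I\oplus\sigma(A)$ is determined by this pair together with the products of $I$ and $A$, and translate associativity into conditions on $(\lambda,\rho)$. The forward direction and the uniqueness statement are correct as you present them.

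One point of bookkeeping is off, and it matters because you claim your list of equations is exhaustive. Expanding associativity over pure triples gives one more nontrivial case than you record: the mixed triple $x\cdot\sigma(a)\cdot y$ (with $x,y\in I$, $a\in A$) yields the bimultiplication compatibility condition
\[
\rho(a)(x)\cdot y \;=\; x\cdot\lambda(a)(y),
\]
which is not among your (i)--(iii) and is not captured by membership in $\End_{r,l}(I)=\ker(\alpha)$ either. This does not damage the direction actually asserted by the proposition (an action determines a unique morphism into $\End_{r,l}(I)$: the missing identity is an extra property of the pair, not an obstruction to defining it), but it does affect your converse. To run the computation ``in reverse'' and obtain an associative product on $I\oplus A$ you need that identity, and you also need the commutation $\lambda(a)\circ\rho(b)=\rho(b)\circ\lambda(a)$ for \emph{all} pairs $(a,b)$, whereas landing in $\ker(\alpha)$ only gives the diagonal case $a=b$; polarizing at $a+b$ yields only the symmetrized relation $\lambda(a)\rho(b)+\lambda(b)\rho(a)=\rho(b)\lambda(a)+\rho(a)\lambda(b)$. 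To be fair, the paper's own sketch and its definition of $\End_{r,l}(I)$ gloss over exactly the same two points, so you are no worse off than the source; but since you explicitly assert that your three families of equations are ``exactly'' the content of associativity, the omission should be repaired.
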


\begin{proof} 
If an action
\[\begin{tikzcd} I \ar[r, hook] & B \ar[r, two heads] & A \ar[l, bend right]
\end{tikzcd}\]
is given, $\lambda(a)$ and $\rho(a)$ are obtained from left and right multiplications by $a$ in $B$. Conversely, a morphism $(\lambda,\rho)$ as above can be used to define an (associative) algebra structure on $I \times A$ defining an action of $A$ on $I$.
\end{proof}
Let us remark that if $I^2 = 0$ (that is, $I$ is endowed with a trivial algebra structure), then an action of $A$ on $I$ is just a $A$-bimodule structure.

\begin{rmq}
The same construction works in the category of (non-unitary) filtered algebras $f\mathcal Alg_-$, where we also get a representation of actions. The algebras $\End_r(I)$ and $\End_l(I)$ are then filtered by the usual requirement: a morphism $u$ is of degree at least $j$ if $u(I_i) \subseteq I_{i+j}$ for all $i$. The same requirement will be used to define a filtration on any module of morphisms between filtered modules or algebras.
\end{rmq}

\begin{defi}
Let $A$ act on $I$ as above. A \emph{derivation} from $A$ to $I$ is a $\k$-linear map $\partial: A \longrightarrow I$ satisfying:
\[\partial(ab) = \partial a \cdot b +  a\cdot \partial b.\] 
The $\k$-module of derivations from $A$ to $I$ is denoted by $\Der(A,I)$. 
\end{defi}

\begin{rmq}\label{der_of_alg_as_sections}
The relation defining derivations depends only on the $A$-bimodule structure on $I$. We are thus led to consider $I^o$, the algebra obtained by taking the same underlying $\k$-module as $I$, endowed with the trivial product. The action of $A$ on $I$ induces an action of $A$ on $I^o$ in the obvious manner, and a derivation from $A$ to $I$ is then the same as a section of the projection:
\[I^o \rtimes A \twoheadrightarrow A\]
in the category of algebras.
\end{rmq}

When we work in the category of filtered algebras, $\Der(A,I)$ is a filtered module, a derivation $\partial$ being of degree at least $j$ if $\partial(A_i) \subseteq I_{i+j}$ for all $i$. If $A$ is filtered by its powers $A^i$, we just have to check this in degree one:

\begin{lem}\label{degree_of_derivations}
Let $A$ be an algebra, filtered by its powers $A_i:= A^i$, acting on a filtered algebra $I_*$. Let $\partial \in \Der(A,I)$. Then $\partial$ is of degree at least $j$ if and only if:
\[\partial(A) \subseteq I_{j+1}.\]
\end{lem}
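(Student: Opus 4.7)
The plan is to prove the two directions separately, with the non-trivial one being a routine induction on the degree.

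The forward direction is immediate: if $\partial$ is of degree at least $j$, then by definition $\partial(A_1) \subseteq I_{1+j} = I_{j+1}$, and since $A_1 = A^1 = A$, we get $\partial(A) \subseteq I_{j+1}$.

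For the converse, assume $\partial(A) \subseteq I_{j+1}$. I want to show by induction on $i \geq 1$ that $\partial(A^i) \subseteq I_{i+j}$. The case $i=1$ is the hypothesis. For the inductive step, observe that $A^{i+1}$ is generated as a $\k$-module by products $ab$ with $a \in A^i$ and $b \in A$, so by $\k$-linearity of $\partial$ it suffices to control $\partial(ab)$ on such products. Applying the derivation rule,
\[
\partial(ab) \;=\; \partial(a)\cdot b + a\cdot \partial(b).
\]
By the inductive hypothesis $\partial(a) \in I_{i+j}$, and by hypothesis $\partial(b) \in I_{j+1}$; moreover $a \in A_i$, $b \in A_1$. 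Since the action of $A$ on $I$ in the category of filtered algebras is given by a filtration-preserving morphism $A \to \End_{r,l}(I)$ (as noted in the remark preceding the lemma), the bimodule action satisfies $A_p \cdot I_q + I_q \cdot A_p \subseteq I_{p+q}$. Applying this to each term yields $\partial(a)\cdot b \in I_{(i+j)+1}$ and $a\cdot \partial(b) \in I_{i+(j+1)}$, so $\partial(ab) \in I_{(i+1)+j}$, closing the induction.

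The only point requiring care is the filtration behaviour of the bimodule action, which is built into the setup of filtered-algebra actions; once that is invoked, the computation is a one-line application of the Leibniz rule. No non-trivial obstacle is expected.
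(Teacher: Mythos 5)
Your proof is correct and follows essentially the same route as the paper: the paper simply writes out the $i$-fold Leibniz formula $\partial(a_1\cdots a_i)=\sum_k a_1\cdots a_{k-1}\,\partial(a_k)\,a_{k+1}\cdots a_i$ and concludes using the filtration-compatibility $A_pI_q+I_qA_p\subseteq I_{p+q}$, which is just your induction unrolled. No issues.
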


\begin{proof}
An action of $A_*$ on $I_*$ is given by a left and a right multiplication which are filtered, meaning that $A_i I_j \subseteq I_{i+j}$ and $I_j A_i \subseteq I_{i+j}$. Use the formula:
\[\partial(a_1 \cdots a_i) = \sum\limits_k{a_1 \cdots a_{k-1} \partial(a_k)a_{k+1} \cdots a_i}\]
to get the desired result.
\end{proof}

We can get examples of actions from algebras acting on themselves. Precisely, the adjoint action of $A$ on itself is just the obvious $A$-bimodule structure on $A$. Derivations are then the usual ones.

Given an action of $A$ on $I$ represented by $(\rho, \lambda)$, we can twist it by choosing endomorphisms $\varphi$ and $\psi$ of $A$ and letting $A$ act on $I$ through $(\rho \circ \varphi, \lambda \circ \psi)$. This means that we let $a \in A$ act on $I$ by $\varphi(a)\cdot -$ on the left, and by $- \cdot \psi(a)$ on the right. We give a name to derivations from $A$ to the the twisted $A$-bimodule $I$.
\begin{defi}
Let $\varphi$ and $\psi$ be endomorphisms of $A$. A \emph{$(\varphi, \psi)$-derivation} is a linear map $\partial: A \longrightarrow I$ satisfying:
\[\partial(ab) = \partial a \cdot \psi b +  \varphi a \cdot \partial b.\]  
We denote by $\Der_{(\varphi, \psi)}(A,I)$ the $\k$-module of such derivations.
\end{defi}

\begin{ex}
Let $A$ be a group algebra $\k G$ and $M$ a $\k G$-module. We can make $M$ into a bimodule by making $\k G$ act trivially on the right (that is, through $\varepsilon$). Then, $\Der(\k G, M)$ is exactly the usual module of derivations (see Definition \ref{def_derivations_ab}). If $M = \k G$, it is already a bimodule, but the above structure can be obtained through twisting the right action by $\eta \varepsilon: g \mapsto \varepsilon(g) \cdot 1$. Then
$\Der(\k G)$ (defined in Definition \ref{def_derivations_ab}) is exactly $\Der_{(id, \eta\varepsilon)}(\k G, \k G).$
\end{ex}

We can apply Lemma \ref{degree_of_derivations} to $A = I = IG$, to get: 
\begin{cor}\label{degree_of_derivations_of_kG}
Let $\partial$ be a derivation of $\Z G$ such that $\partial (\Z G) \subseteq (IG)^{l+1}$ (which is always true for $l = -1$). For all integer $k$, we have:
\[\partial ((IG)^k) \subset (IG)^{k+l}.\]
\end{cor}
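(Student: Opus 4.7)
The plan is to deduce this as a direct application of Lemma \ref{degree_of_derivations}, provided one chooses the correct bimodule structure. As noted in the example preceding the corollary, a derivation $\partial: \Z G \to \Z G$ in the sense of Definition \ref{def_derivations_ab} is precisely a $(\Id{}, \eta\varepsilon)$-derivation in the terminology of paragraph \ref{alg_actions_and_der}: it satisfies the Leibniz rule for the $\Z G$-bimodule structure on $\Z G$ whose left action is ordinary multiplication, but whose right action is twisted through the augmentation $\eta\varepsilon$.

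First I would restrict $\partial$ to the augmentation ideal $A := IG$, viewed as a filtered algebra under its own powers $A_i := (IG)^i$. The key observation is that $\varepsilon$ vanishes identically on $IG$, so the twisted right action $x \cdot y = x \, \eta\varepsilon(y)$ is identically zero when the right-hand argument lies in $A$. Consequently, the bimodule structure restricts on $A$ to a genuinely filtered action on $I_* := (IG)^*$ in the sense of paragraph \ref{alg_actions_and_der}: ordinary multiplication on the left, zero on the right. For this restricted action, $\partial|_A$ is an honest derivation $A \to I_*$.

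Then the hypothesis $\partial(\Z G) \subseteq (IG)^{l+1}$ specializes to $\partial(A) \subseteq I_{l+1}$, and Lemma \ref{degree_of_derivations} applies with $j = l$ to yield
\[\partial((IG)^k) = \partial(A_k) \subseteq I_{k+l} = (IG)^{k+l}\]
for every $k \geq 1$. The corner case $k = 0$, where $(IG)^0$ is understood as $\Z G$, is a restatement of the hypothesis together with the trivial inclusion $(IG)^{l+1} \subseteq (IG)^l$.

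I do not expect any real obstacle: the whole content is in the translation between the free-differential-calculus formulation of $\partial$ and the bimodule/action language of paragraph \ref{alg_actions_and_der}. Once the twisted bimodule has been identified and the vanishing of $\varepsilon$ on $IG$ is exploited, Lemma \ref{degree_of_derivations} does the induction on $k$ for us.
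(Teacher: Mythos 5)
Your proof is correct and takes essentially the same route as the paper: the paper also obtains the corollary by applying Lemma \ref{degree_of_derivations} with $A = I = IG$, and its remark following the statement makes precisely your key observation that $\varepsilon$ vanishes on $IG$, so the twisted right action dies and $\partial(uv) = u\,\partial v$ for $v \in IG$. Your extra care in checking the $k=0$ case and in spelling out why the restricted bimodule structure is filtered is a faithful elaboration of what the paper leaves implicit.
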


\begin{rmq}
Let us stress that the proof given here is fairly direct. In fact, it gets even shorter in this case, the result following from: 
$\forall v \in I,\ \partial(uv) = u \cdot \partial v.$
\end{rmq}

\begin{rmq}\label{derivation_on_LCD}
The classical inclusion of $\Gamma_k - 1$ into $IG^k$ can be shown by a direct induction, or follows from Lazard's theorem (as $D_*(G)$ is a strongly central series, it contains  $\Gamma_* G$). Under the hypothesis of Proposition \ref{degree_of_derivations_of_kG}, it implies
$\partial (\Gamma_k) \subseteq (IG)^{k+l}$,
using that $\partial(1) =0$. 
\end{rmq}

\begin{rmq}\label{algebra_structure_on_Der}
Some sets of derivations obtained from actions of $A$ on itself can have more structure than just a module structure. Precisely, if we twist the adjoint action of $A$ by some $(\varphi, \psi)$ as above, and if we add the requirement that $\varphi$ and $\psi$ are idempotents, then the set of $(\varphi, \psi)$-derivations from $A$ to $A$ commuting to $\varphi$ and $\psi$ is a sub-Lie algebra of $\End_{\k}(A)$:
\begin{align*}
[\partial, \partial'](ab) =\  &\partial\partial'a \cdot \psi^2 b
                               + \varphi\partial'a \cdot \partial\psi b
                               + \partial\varphi a \cdot \psi\partial' b
                               + \varphi^2 a \cdot \partial\partial' b \\
                             &- \partial'\partial a \cdot \psi^2 b
                               - \varphi\partial a \cdot \partial'\psi b
                               - \partial'\varphi a \cdot \psi\partial b
                               - \varphi^2 a \cdot \partial'\partial b \\
                          =\  &[\partial, \partial'](a) \cdot \psi(b) + \varphi(a) \cdot [\partial, \partial'](b).
\end{align*}
In particular, if $(\varphi, \psi) = (\Id{}, \Id{})$, we get that $\Der(A)$ is a sub-Lie algebra of $\End_{\k}(A)$. Another example is given by $A = \k G$ and $(\varphi, \psi) = (\Id{}, \eta \varepsilon)$. But more is true is this last case. Let $\k G$ be filtered by the powers of the augmentation ideal. If $\partial, \partial' \in \Der(\k G)$ are such that $\partial'$ has degree at least $0$, then $\partial \circ \partial' \in \Der(\k G)$, because $\varepsilon(\partial' v) = 0$ for any $v$.
\end{rmq}

Let $A_* \circlearrowright I_*$ be an action of filtered algebras. Since the functor $\gr: f\mathcal Alg \longrightarrow gr\mathcal Alg$ from filtered algebras to graded ones is exact (the same proof as that of Proposition \ref{exactness_of_L} works), this action is sent to an action $\gr(A_*) \circlearrowright \gr(I_*)$ of graded algebras. Moreover, $\gr$ also commutes with $(-)^o$ (the definition of $(-)^o$ being extended to graded algebras in the obvious way), so we get a morphism:
\[\gr(\Der(A_*, I_*)) \hookrightarrow \Der_*(\gr(A_*), \gr(I_*)),\]
where the target is the graded module of graded derivations. This morphism is obviously injective. It is in fact a restriction of the natural injection:
\[\gr(\Hom_{\k}(M_*, N_*)) \hookrightarrow \Hom_*(\gr(M_*), \gr(N_*))\]
between bifunctors on graded modules. As such, it preserves all algebraic structure inherited from the additive bifunctor structure (see Remark \ref{algebra_structure_on_Der}).

When $A_*$ is $\k G$, filtered by the powers of $IG$, acting on itself, we thus get a morphism preserving the structure induced by composition:
\[\gr(\Der(\k G)) \hookrightarrow \Der_*(\gr(\k G)).\]

\begin{prop}\label{graded_of_derivations}
If $G$ is a free group, and $M_*$ is a filtered $\k G$-module (considered as a bimodule with trivial right action), the canonical map:
\[\gr(\Der(\k G, M_*)) \hookrightarrow \Der_*(\gr(\k G), \gr(M_*))\]
is an isomorphism. Here, by derivations, we mean $(id, \varepsilon)$-ones.
\end{prop}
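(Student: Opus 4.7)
The plan is to reduce both sides to the same concrete description by exploiting the freeness of $G$ (and hence of $\gr(\k G)\cong TV$), and then check that the canonical comparison map is the evident identification. Fix a free basis $S\subseteq G$; as recalled after Definition \ref{def_derivations_ab}, evaluation on $S$ gives an isomorphism $\Der(\k G, M_*)\cong M_*^S$ of $\k$-modules sending $\partial$ to $(\partial(x))_{x\in S}$.

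First I would identify the filtration on the left-hand side under this isomorphism. Since $\partial(1)=0$, a $(\mathrm{id},\varepsilon)$-derivation of $\k G$ is the same as a derivation of the non-unitary algebra $IG$; applying Lemma \ref{degree_of_derivations} to $IG$ filtered by its own powers, the condition that $\partial$ be of degree $\geq d$ becomes $\partial(IG)\subseteq M_{d+1}$. Using the Fox-calculus expansion $\partial(u)=\sum_{x\in S}(\partial u/\partial x)\,\partial(x)$ together with the module compatibility $\k G\cdot M_{d+1}\subseteq M_{d+1}$, this is in turn equivalent to requiring $\partial(x)\in M_{d+1}$ for every $x\in S$. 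Hence evaluation identifies $\Der^{\geq d}(\k G,M_*)\cong M_{d+1}^S$, whence
\[\gr_d\Der(\k G,M_*)\;\cong\;(M_{d+1}/M_{d+2})^S\;=\;(\gr_{d+1}M_*)^S.\]

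Second, I would compute the right-hand side. Freeness of $G$ gives $\gr(\k G)\cong TV$ with $V=I_\k G/I_\k^2 G\cong\k^S$ concentrated in degree $1$. The graded augmentation $\varepsilon:TV\to\k$ vanishes on $V$, so a graded $(\mathrm{id},\varepsilon)$-derivation $\bar\partial$ of degree $d$ satisfies $\bar\partial(uv)=u\bar\partial(v)$ whenever $v$ has positive degree; such a $\bar\partial$ is therefore uniquely determined by its restriction $\bar\partial|_V:V\to N_{d+1}$, and conversely any linear map of this form extends (by the universal property of the tensor algebra, or by induction on degree). This yields $\Der_d(TV,N)\cong\Hom_\k(V,N_{d+1})$, which for $N=\gr(M_*)$ becomes $(\gr_{d+1}M_*)^S$.

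It remains to verify that the canonical map $\gr\Der(\k G,M_*)\hookrightarrow\Der_*(\gr\k G,\gr M_*)$ is the evident identification of these two copies of $(\gr_{d+1}M_*)^S$. Given $\partial\in\Der^{\geq d}(\k G,M_*)$, the associated graded derivation sends $\overline{x-1}\in V$ to $\overline{\partial(x-1)}=\overline{\partial(x)}\in\gr_{d+1}M_*$; under the two computations above, this is precisely the element $(\overline{\partial(x)})_{x\in S}$ on both sides. Thus the comparison map is the identity on $(\gr_{d+1}M_*)^S$ and in particular an isomorphism. The only mildly delicate point is the bookkeeping of the degree shift (degree $d$ on each side records values in $\gr_{d+1}$), but it matches consistently on both sides, so no genuine obstacle arises.
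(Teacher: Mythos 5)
Your proof is correct and follows essentially the same route as the paper: both sides are identified with $S$-indexed families of elements of $M_*$ (resp.\ $\gr(M_*)$) by evaluating on a free basis, using the freeness of $G$ and the corresponding universal property of $TV$ with respect to derivations, and then the comparison map is checked to be the evident identification. Your bookkeeping of the degree shift (a degree-$d$ derivation recording values in $\gr_{d+1}M_*$ on both sides) is in fact slightly more careful than the paper's own statement of the identifications.
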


\begin{proof}
Let $S$ be a free set of generators for $G$. Then $V = G^{ab}$ is free abelian on $S$, and $\gr(\k G) \cong TV$ is the tensor algebra. Identifying derivations with sections as above (see Remark \ref{der_of_alg_as_sections}), we see that a derivation is completely determined by the choice of its values on $S$:
\[\Der_*(TV, N_*) = \mathcal F_*(S,N_*),\]
for any graded $TV$-bimodule $N_*$, where $\mathcal F_*(S,N_*)$ is the set of graded maps from $S$ (concentrated in degree $0$) to $N_*$.  The same is true for the other side. Indeed, a derivation from $\k G$ to $M$ is a section of the projection $M \rtimes G \twoheadrightarrow G$, so is determined by a map $S \rightarrow M$:
\[\Der(\k G, M_*) =\Der(G, M_*) \cong M_*^S.\]
The second member is the set of maps from $S$ to $M$, with the filtration inherited from the one on $M$. The desired isomorphism is then exactly:
$\gr(M_*^S) \cong \mathcal F_*(S,\gr(M_*)).$ 
\end{proof}

\begin{rmq}
If $M$ is a $G$-module, we can endow it with the universal $\k G$-filtration $(IG)^*\cdot M$.
\end{rmq}

\begin{rmq}
The isomorphism $\gr(\Der(\k G)) \cong \Der_*(TV)$ thus obtained preserves the algebraic structure obtained from the composition of derivations.
\end{rmq}

\subsection{Traces}

In \cite{Bartholdi}, Bartholdi defines the trace of an automorphism $\varphi$ of $F_n$ by:
\[\tr(\varphi):= \tr(D \varphi - \Id{}) \in \Z F_n,\]
where $D \varphi$ denotes $\varphi$'s Jacobian matrix.
We will show that $\tr$ induces a well-defined map between the graded Lie algebras, which we still call $\tr$:
\[\tr:\Lie(\mathcal A_*(F_n)) \longrightarrow \gr(\Z F_n) \cong TV.\]
The aim of this paragraph is to show that this map is indeed well-defined, to investigate its behaviour with respect to Lie structures, and to get Morita's algebraic description \cite{Morita}, used by Satoh in \cite{Satoh2}.

\subsubsection{The induced map between Lie algebras}\label{gradué}

Let $\varphi \in \mathcal A_k(F_n)$. By definition, 
$\varphi_i:= x_i^{-1}\varphi (x_i) \in \Gamma_{k+1}.$
The Jacobian matrix of $\varphi$ can be described explicitly:
\[(D\varphi)_{ij} =  \frac{\partial (x_i\varphi_i)}{\partial x_j}
                  =  \underbrace{\frac{\partial x_i}{\partial x_j}}_{\delta_{ij}} 
               + x_i \frac{\partial (\varphi_i)}{\partial x_j}. \]
Hence:
\[D\varphi - \Id{} = \left(x_i \frac{\partial \varphi_i}{\partial x_j} \right)_{ij}.\]
Using Remark \ref{derivation_on_LCD}, we see that this matrix is in fact in $M_n(I^k)$ (to shorten notations, we write $I$ for $I F_n$ in the sequel).
Moreover, $x_i$ acts trivially on $I^k/I^{k+1}$. We thus get an explicit formula for the trace map:
\[\tr (D\varphi - \Id{}) = \sum\limits_i x_i\frac{\partial \varphi_i}{\partial x_i} \equiv \sum\limits_i \frac{\partial \varphi_i}{\partial x_i} \pmod{I^{k+1}}.\]

\medskip

Let $G$ be any group. We can apply the construction of Paragraph \ref{congruence_filtration} to $A = \k G$, filtered by the powers of the augmentation ideal. This gives a strongly central filtration $GL_n(I^*G)$ on $GL_n(IG)$, which comes with an embedding of Lie algebras:
\[\Lie(GL_n(I^*G)) \hookrightarrow \gr(M_n(\k G)) \cong M_n(\gr(\k G)).\]

\smallskip

The next proposition replaces a formula from \cite[section 6]{Bartholdi}:
\begin{prop}\label{gr_de_D}
The Jacobian matrix $D$ induces a morphism between graded modules:
\[D: \Lie(\mathcal A_*(F_n)) \longrightarrow \Lie(GL_n(I^*F_n)),\]
satisfying:
\[D([f,g]) = \left[g, Df\right] + \left[Dg, f\right] + \left[Dg,Df\right].\]
\end{prop}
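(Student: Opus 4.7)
The plan is to split the statement into two parts: first, that $D$ carries $\mathcal{A}_k(F_n)$ into $GL_n(I^k F_n)$ and therefore induces a morphism of graded $\Z$-modules; second, that the induced morphism satisfies the claimed bracket identity.

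For the first part, the explicit computation sketched in Paragraph \ref{gradué} already does essentially all the work. For $\varphi \in \mathcal{A}_k(F_n)$ the elements $\varphi_i = x_i^{-1}\varphi(x_i)$ belong to $\Gamma_{k+1}(F_n) \subseteq 1 + I^{k+1}F_n$, and combining the formula $D\varphi - \mathrm{Id} = (x_i\,\partial\varphi_i/\partial x_j)_{ij}$ with Corollary \ref{degree_of_derivations_of_kG} (each $\partial/\partial x_j$ is a derivation of $\Z F_n$, so it sends $I^{k+1}$ into $I^k$) shows that $D\varphi - \mathrm{Id} \in M_n(I^k F_n)$; hence $D\varphi \in GL_n(I^k F_n)$.

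For the bracket identity, the conceptual approach is to recast $D$ via the derivation framework of Section \ref{derivations}. The chain rule $D(fg) = f(Dg)\cdot Df$ of Corollary \ref{Dérivation_composée} has its factors in the opposite order from what one would want for $f\mapsto (Df,f)$ to be a section into the standard semi-direct product, so I would pass to $GL_n(\Z F_n)^{op}$, the same group with opposite multiplication. Since $\Aut(F_n)$ still acts on $GL_n(\Z F_n)^{op}$ by group automorphisms through its action on $\Z F_n$, the map $\sigma: f \mapsto (Df, f)$ is a group homomorphism $\Aut(F_n) \to GL_n(\Z F_n)^{op} \rtimes \Aut(F_n)$: directly, $(Df,f)(Dg,g) = (f(Dg)\cdot Df,\ fg) = (D(fg),fg)$. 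By the first part, $\sigma$ respects the filtrations $\mathcal{A}_*(F_n)$ and $GL_n(I^*)^{op} \rtimes \mathcal{A}_*(F_n)$, the latter being strongly central by Proposition \ref{actions_dans_SFC} together with the compatibility $[\mathcal{A}_k, GL_n(I^j)] \subseteq GL_n(I^{k+j})$---itself a consequence of Proposition \ref{égalité_de_A}, which identifies the Andreadakis filtration defined through $\Gamma_*$ with the one defined through the augmentation filtration. Applying the exact functor $\Lie$ (Proposition \ref{exactness_of_L}) then yields a morphism of graded Lie rings
\[\bar\sigma = (\bar D,\mathrm{id}) : \Lie(\mathcal{A}_*(F_n)) \longrightarrow \Lie\bigl(GL_n(I^*F_n)\bigr)^{op} \rtimes \Lie(\mathcal{A}_*(F_n)),\]
whose preservation of Lie brackets unpacks into the three summands of the claim, by the standard formula for the bracket in a semi-direct product Lie algebra: the intrinsic bracket of $\Lie(GL_n(I^*))^{op}$ gives $[Dg,Df]$, and the two cross-term actions (the Johnson action of $\Lie(\mathcal{A}_*)$ on $\Lie(GL_n(I^*))$ restricted along $\bar D$) give $[g,Df]$ and $[Dg,f]$.

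The main obstacle is bookkeeping: one must keep track of the opposite convention on $GL_n$ and the resulting sign pattern in the cross-terms. As a sanity check, the identity can be verified computationally by expanding
\[D([f,g]) = fgf^{-1}(Dg^{-1}) \cdot fg(Df^{-1}) \cdot f(Dg) \cdot Df\]
through iterated applications of the chain rule, setting $X = Df - \mathrm{Id}$ and $Y = Dg - \mathrm{Id}$, and reducing modulo $M_n(I^{a+b+1})$. The key simplification is that elements of $\mathcal{A}_{\geq 1}$ act trivially on each graded piece of $\Z F_n$, so that almost all correction terms of the form $\sigma(M) - M$ with $M \in M_n(I^c)$ and $\sigma \in \mathcal{A}_{\geq 1}$ fall into $M_n(I^{c+1})$, causing massive cancellation. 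What survives is exactly the matrix commutator $[Y,X]$ (from the noncommutativity of $GL_n$) together with the two first-order Johnson contributions, reproducing the three terms of the statement.
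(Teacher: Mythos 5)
Your proposal is correct and follows essentially the same route as the paper: the explicit Jacobian computation of Paragraph \ref{gradué} for the degree statement, the chain rule of Corollary \ref{Dérivation_composée} exhibiting $f \mapsto (Df, f)$ as a section into $GL_n(I)^{op} \rtimes \Aut(F_n)$, the identification $[\mathcal A_k, GL_n(I^j)] \subseteq GL_n(I^{k+j})$ via Proposition \ref{égalité_de_A} (together with Corollary \ref{incl_de_A} and the dimension property of free groups, which you compress slightly), and the derivation/semi-direct product machinery of Section \ref{derivations} to extract the bracket formula. The only cosmetic difference is that you re-derive the Lie-level conclusion from exactness of $\Lie$ rather than citing the pre-packaged result of Paragraph \ref{derivations}, and you leave the sign bookkeeping for the opposite convention at the same level of detail as the paper does.
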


In order to address the issue raised in Remark \ref{sens_D(fg)}, let us introduce some notations before proving the proposition. If $G$ is any group, we denote by $G^{op}$ the \emph{opposite group}, where multiplication is defined by:
$g \cdot_{op} h = hg.$
Let $G_*$ be a strongly central filtration on $G$. Then $G_*^{op}$ is such a filtration on $G^{op}$ and one easily checks that:
\[\Lie(G_*^{op}) = \Lie(G_*)^{op},\]
where the bracket in $\Lie(G_*)^{op}$ is $\Lie(G_*)$'s additive inverse: $[x,y]_{op} = [y,x].$

\begin{proof}[Proof of Proposition \ref{gr_de_D}]

Corollary \ref{Dérivation_composée} states exactly that $D$ is a derivation from $\Aut(F_n)$ to $GL_n(I)^{op} \subset M_n(\Z F_n)^{op}$, where $M_n(\Z F_n)$ is endowed with the obvious $\Aut(F_n)$-action.
We thus can apply the results from \ref{derivations} with $\partial = D$, $G = \Aut(F_n)$, $H = GL_n(I)^{op}$, and $H_* =  GL_n(I^*)^{op}$. 

The strongly central filtration $\mathcal A_*(G,H_*)$ is in fact the Andreadakis filtration $\mathcal A_* = \mathcal A_*(F_n)$ on $\mathcal A_1 = IA_n \subset \Aut(F_n)$. Indeed, there is a series of inclusions:
\[\mathcal A_*\left(G, D_*F_n\right)        \supseteq
  \mathcal A_*\left(G, GL_n(I^*)\right) \supseteq
  \mathcal A_*\left(G, M_n(\Z F_n)\right) =
  \mathcal A_*\left(G, \Z F_n \right).\]
The first one comes from \ref{inj-surj} applied to the injection of $D_*F_n$ into $GL_n(I^*)$ defined by $w \longmapsto w \cdot \Id{}$. The second one is a particular case of \ref{incl_de_A}. The last equality comes from the fact that $G$ acts component-wise on matrices:
\[[g, (m_{ij})] = g \cdot (m_{ij}) - (m_{ij}) = ([g, m_{ij}]).\]
According to proposition \ref{égalité_de_A}, these inclusions are in fact equalities.

Moreover, we have seen at the beginning of the present paragraph that $D$ sends $\mathcal A_*$ to $GL_n(I^*)^{op}$. The filtration $\mathcal A_* \cap \partial^{-1}(H_*)$ is thus only $\mathcal A_*$. The work already done in \ref{derivations} allows us to get the desired result.
\end{proof}

The map given by Proposition \ref{gr_de_D} can be composed with the morphism:
\[\Lie(GL_n(I^*)) \overset{(-) - \Id{}}{\longrightarrow} \gr(M_n(\Z F_n)) \cong M_n(\gr(\Z F_n)) = M_n(TV).\]
Thus, for $\varphi$ an element of $\mathcal A_k/\mathcal A_{k+1}$, $D\varphi - \Id{}$ is well-defined modulo $M_n(I^{k+1})$.
Composing with the usual trace, we get the announced well-defined linear map induced by $\varphi \longmapsto \tr(D\varphi - \Id{})$:
\[\tr: \Lie(\mathcal A) \longrightarrow TV.\]

\begin{rmq}
That the map $D-\Id{}$ (hence $\tr$) induces a well-defined map between the Lie algebras can be seen through explicit calculation, but the behaviour with respect to the Lie bracket is much less obvious from this point of view.
Indeed, let $\varphi \in \mathcal A_k$. If $\varphi = \psi \chi$, with $\chi \in \mathcal A_{k+1}$, then:
\[\psi(x_i) = \varphi(x_i \chi_i) = x_i \varphi_i \varphi(\chi_i).\]
As $\chi_i$ stands inside $\Gamma_{k+2}$, its image by $\varphi$ does too. Thus:
\[\frac{\partial \psi_i}{\partial x_j} = 
\frac{\partial \varphi_i}{\partial x_j} + \varphi_i \underbrace{\frac{\partial \varphi(\chi_i)}{\partial x_j}}_{\in I^{k+1}} \equiv 
\frac{\partial \varphi_i}{\partial x_j} \pmod{I^{k+1}}. \]
\end{rmq}

\subsubsection{Introducing the contraction map}\label{contraction_map}

Consider the evaluation map:
\[ev: \Der_{(\Id{},\varepsilon)}(TV) \otimes TV \longrightarrow TV.\]
Using the universal property of $TV$, as in the proof of \ref{graded_of_derivations}, we get a linear isomorphism:
\[\Der_{(\Id{},\varepsilon)}(TV) \cong \Hom(V,TV) \cong V^* \otimes TV.\]
The evaluation map then is:
\[\left\{\begin{array}{lll}
         V^* \otimes TV \otimes TV  &\longrightarrow &TV \\
         \omega \otimes u \otimes v &\longmapsto     &\Phi(\omega \otimes v)u,
         \end{array}
\right.\]
where $\Phi$ is the contraction map:
\[\Phi: \left\{\begin{array}{lll}
                 V^* \otimes V^{\otimes k + 1} 
                 &\longrightarrow & V^{\otimes k}\\
				 \alpha \otimes X_{i_1} \cdots X_{i_{k+1}}              
				 &\longmapsto     & X_{i_1} \cdots X_{i_k} \alpha(X_{i_{k+1}}),
				        \end{array} \right.\]
extended by zero on $\k \cdot 1$. This follows from the fact that any $(\Id{}, \varepsilon)$-derivation $\partial$ verifies:
\[\partial(uv) = u \cdot \partial v,\]
when the degree of $v$ is at least $1$ (that is, when $\varepsilon(v) = 0$), and $\partial(1) = 0$.

\medskip

We sum this up in the following:
\begin{prop}
Let $\partial \in \Der_{(\Id{}, \varepsilon)}(TV)$. Then:
\[\partial = \Phi(\partial|_V \otimes -).\]
\end{prop}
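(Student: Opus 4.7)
The plan is to observe that both sides of the claimed identity are $\k$-linear endomorphisms of $TV$, and to show they agree on each homogeneous component $V^{\otimes k}$ by induction on $k$, using the derivation property of $\partial$ and the explicit formula for the contraction $\Phi$.

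First, I would dispatch the low degrees. Applying the $(\Id,\varepsilon)$-derivation rule to $1\cdot 1 = 1$ gives $\partial(1) = 2\partial(1)$, hence $\partial(1)=0$; on the other hand, $\Phi$ is extended by zero on $\k\cdot 1$ by its very definition, so both sides vanish in degree $0$. In degree one, we tautologically have $\partial|_V = \partial$ restricted to $V$, and a direct look at the defining formula of $\Phi$ on $V^*\otimes V^{\otimes 1}$ shows that $\Phi(\alpha\otimes X) = \alpha(X)\in \k$, so writing $\partial|_V = \sum_j \alpha_j\otimes w_j$ under the identification $\Hom(V,TV)\cong V^*\otimes TV$, we get $\Phi(\partial|_V\otimes X) = \sum_j \alpha_j(X) w_j = (\partial|_V)(X)$, which is $\partial(X)$.

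For the inductive step, take $w = X_{i_1}\cdots X_{i_{k+1}}\in V^{\otimes k+1}$, decompose it as $w = w' \cdot X_{i_{k+1}}$ with $w' = X_{i_1}\cdots X_{i_k}$, and apply the $(\Id,\varepsilon)$-derivation rule: since $\varepsilon(X_{i_{k+1}}) = 0$, one obtains $\partial(w) = w'\cdot \partial(X_{i_{k+1}})$. Writing $\partial|_V = \sum_j \alpha_j\otimes w_j$ as above, this becomes $\partial(w) = \sum_j w'\,\alpha_j(X_{i_{k+1}})\, w_j$. On the other hand, unfolding the evaluation formula $\omega\otimes u\otimes v \mapsto \Phi(\omega\otimes v)u$ term-wise gives $\Phi(\partial|_V\otimes w) = \sum_j \Phi(\alpha_j\otimes w)\, w_j$, and the explicit description of $\Phi$ on $V^*\otimes V^{\otimes k+1}$ yields exactly $\Phi(\alpha_j\otimes w) = w'\,\alpha_j(X_{i_{k+1}})$. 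Thus the two expressions coincide, completing the induction.

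The main conceptual content, rather than any serious obstacle, is really just bookkeeping: one has to handle the isomorphism $\Der_{(\Id,\varepsilon)}(TV)\cong V^*\otimes TV$ cleanly and track the position where $\Phi$ contracts (the last tensor factor) so that the formula $\partial(uv) = u\cdot\partial v$ for $\deg v \geq 1$ matches the right-multiplication by $u$ hidden in the evaluation formula. No computation beyond the inductive identity above is needed, and the statement is essentially a restatement of the fact (already used in the proof of Proposition~\ref{graded_of_derivations}) that $(\Id,\varepsilon)$-derivations on $TV$ are freely determined by their restriction to $V$.
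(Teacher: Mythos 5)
Your proof is correct and follows essentially the same route as the paper: the paper justifies the formula by noting that any $(\Id{},\varepsilon)$-derivation satisfies $\partial(uv)=u\cdot\partial v$ whenever $\varepsilon(v)=0$ together with $\partial(1)=0$, which is precisely the computation you carry out monomial by monomial. Your version merely makes the paper's one-line argument explicit (the "induction" is really a single application of the derivation rule to $w=w'\cdot X_{i_{k+1}}$), so there is nothing to add.
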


Let us consider the derivation $\frac{\partial}{\partial x_i}$ of $\Z F_n$. It induces a $(\Id{}, \varepsilon)$-derivation of degree $-1$ of $TV$, denoted by $\partial_i$ (any derivation of $\k G$ is of degree at least $-1$, by Corollary \ref{degree_of_derivations_of_kG}). As $\partial_i|_V = X_i^*$, we get the following:
\begin{cor}\label{derivations_and_contractions_2} The $(\Id{}, \varepsilon)$-derivation of $TV$ induced by $\frac{\partial}{\partial x_i} \in \Der(\Z F_n)$ is represented as:
\[\overline{\frac{\partial}{\partial x_i}} = \partial_i = \Phi (X_i^* \otimes -): TV \longrightarrow TV.\]
\end{cor}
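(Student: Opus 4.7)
The plan is to reduce the statement to the preceding proposition, which characterizes any $(\Id{},\varepsilon)$-derivation of $TV$ once its restriction to $V$ is known. Thus the whole content of the corollary is the identification $\partial_i|_V = X_i^* \in V^*$, plus the fact that $\partial_i$ really is of type $(\Id{},\varepsilon)$.

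First I would verify that $\partial_i$ is a $(\Id{},\varepsilon)$-derivation of $TV$. This is a naturality check: $\frac{\partial}{\partial x_i}$ is a derivation from $\Z F_n$ to $\Z F_n$ equipped with its trivial right action (the twist by $\varepsilon$), so it lives in $\Der(\Z F_n, M_*)$ with $M_* = (\Z F_n)_{\varepsilon}$. Passing to the associated graded, the right action of $TV = \gr(\Z F_n)$ on $\gr(M_*)$ becomes trivial too (it factors through the projection $TV \twoheadrightarrow TV_0 = \k$, which is nothing but the augmentation of $TV$). Hence the isomorphism of Proposition \ref{graded_of_derivations} sends $\frac{\partial}{\partial x_i}$ to a $(\Id{},\varepsilon)$-derivation $\partial_i$ of $TV$.

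Second, I would compute $\partial_i|_V$. By Corollary \ref{degree_of_derivations_of_kG} (applied with $l = -1$), the Fox derivative satisfies $\frac{\partial}{\partial x_i}(I^k) \subseteq I^{k-1}$, so $\partial_i$ has degree $-1$. The canonical isomorphism $V \cong I/I^2$ (coming from the identification $\gr(\Z F_n) \cong TV$) sends $X_j$ to the class of $x_j - 1$. A direct computation gives $\frac{\partial}{\partial x_i}(x_j - 1) = \delta_{ij} \in \Z F_n$, whose class in $\gr_0(\Z F_n) = \k$ is $\delta_{ij}$. Therefore $\partial_i(X_j) = \delta_{ij}$, i.e. $\partial_i|_V = X_i^*$. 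Applying the preceding proposition yields $\partial_i = \Phi(X_i^* \otimes -)$, as claimed.

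There is no real obstacle here: once Proposition \ref{graded_of_derivations} is available, everything is a matter of tracking identifications between $F_n^{ab}$, $I/I^2$, and $TV_1$, and noting that $\frac{\partial x_j}{\partial x_i} = \delta_{ij}$ on generators.
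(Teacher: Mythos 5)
Your proposal is correct and follows the same route as the paper: the paper simply observes (just before the corollary) that $\frac{\partial}{\partial x_i}$ induces an $(\Id{},\varepsilon)$-derivation $\partial_i$ of degree $-1$ of $TV$ with $\partial_i|_V = X_i^*$, and then invokes the preceding proposition $\partial = \Phi(\partial|_V \otimes -)$. You merely spell out the two implicit verifications (that the induced graded derivation is of type $(\Id{},\varepsilon)$ via Proposition \ref{graded_of_derivations}, and that $\frac{\partial}{\partial x_i}(x_j-1)=\delta_{ij}$ identifies $\partial_i|_V$ with $X_i^*$), which is exactly what the paper leaves to the reader.
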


We can use these results to interpret the trace map in a way more suited to explicit calculations:
\begin{prop}\label{algebraic_description_of_tr}
The trace map can be described as:
\[\tr = \Phi \circ \iota \circ \tau,\]
where $\tau$ is the Johnson morphism (see Definition \ref{déf_tau}), $\iota$ denotes the inclusion of $\Der_k(LV) \cong V^* \otimes L_{k+1}V$ into $\Der_k(TV) \cong V^* \otimes V^{\otimes k+1}$, and $\Phi$ is the contraction map.
\end{prop}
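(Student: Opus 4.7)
The plan is to chase an element $\bar\varphi \in \Lie_k(\mathcal A_*)$ through both sides of the claimed equality and compare with the explicit formula $\tr(\bar\varphi) = \sum_i \partial \varphi_i/\partial x_i \pmod{I^{k+1}}$ derived in subsection \ref{gradué}, where $\varphi_i = x_i^{-1}\varphi(x_i) \in \Gamma_{k+1}$. First I would unpack $\tau$. By Example \ref{déf_tau}, $\tau(\bar\varphi)$ is the derivation of $\mathfrak LV$ induced by $x \mapsto \varphi(x)x^{-1}$; evaluating on $\bar x_i$, one has $\varphi(x_i)x_i^{-1} = x_i \varphi_i x_i^{-1}$, which is congruent to $\varphi_i$ modulo $\Gamma_{k+2}$ because $[x_i,\varphi_i] \in [\Gamma_1,\Gamma_{k+1}] \subseteq \Gamma_{k+2}$. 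Hence $\tau(\bar\varphi)(\bar x_i) = \bar\varphi_i \in \mathfrak L_{k+1}V$.

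Next I would trace through $\iota$ and the identification $\Der_k(TV) \cong V^* \otimes V^{\otimes k+1}$ from the proof of Proposition \ref{graded_of_derivations}: since a $(\Id{},\varepsilon)$-derivation of $TV$ is determined by its values on $V$, the derivation $\iota\tau(\bar\varphi)$ corresponds to the tensor $\sum_i X_i^* \otimes \bar\varphi_i \in V^* \otimes V^{\otimes k+1}$, where $\bar\varphi_i$ now denotes the image of $\varphi_i$ under the embedding $\mathfrak L_{k+1}V \hookrightarrow V^{\otimes k+1}$ given by $\Gamma_{k+1}/\Gamma_{k+2} \ni g \mapsto (g-1) \bmod I^{k+2}$. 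Applying the contraction $\Phi$ term by term and invoking Corollary \ref{derivations_and_contractions_2}, which identifies $\Phi(X_i^* \otimes -)$ with the graded derivation $\partial_i$ of $TV$ coming from $\partial/\partial x_i$, we obtain
\[
\Phi(\iota \tau(\bar\varphi)) = \sum_i \partial_i(\bar\varphi_i) \in V^{\otimes k}.
\]

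Finally, I would close the loop by comparing $\partial_i(\bar\varphi_i)$ with $\overline{\partial \varphi_i / \partial x_i}$. By the results of paragraph \ref{alg_actions_and_der} (Proposition \ref{graded_of_derivations} and the preceding discussion on the compatibility of $\gr$ with derivations), $\partial_i$ is precisely the associated graded of the Fox derivation $\partial/\partial x_i : \Z F_n \to \Z F_n$, which has degree $-1$ by Corollary \ref{degree_of_derivations_of_kG}. Therefore $\partial_i$ applied to the class $\varphi_i - 1 \in I^{k+1}/I^{k+2}$ yields $\partial\varphi_i/\partial x_i \bmod I^{k+1}$, and summing over $i$ recovers exactly the explicit formula for $\tr(\bar\varphi)$ obtained in subsection \ref{gradué}, proving the equality.

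The only real bookkeeping step — and the one I expect to require the most care — is the justification that the two uses of the ``bar'' notation agree: namely, that the embedding $\mathfrak L_{k+1}V \hookrightarrow V^{\otimes k+1}$ used when writing $\iota\tau(\bar\varphi)$ coincides with the map $\Gamma_{k+1}/\Gamma_{k+2} \to I^{k+1}/I^{k+2}$ induced by $g \mapsto g-1$. This is exactly the content of Lazard's embedding (Theorem \ref{Lazard}) applied to $\Z F_n$ with the augmentation filtration, together with the fact that $\gr(\Z F_n) \cong TV$ realizes $\Lie(F_n)$ as the free Lie subalgebra $\mathfrak LV$ (Example \ref{dimension}); once this identification is made explicit, the rest of the computation is mechanical.
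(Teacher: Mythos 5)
Your proposal is correct and follows essentially the same route as the paper's own proof: compute $\tau(\bar\varphi)(\bar x_i)=\bar\varphi_i$, reduce to the identity $\Phi\bigl(\sum_i X_i^*\otimes\overline{\varphi_i}\bigr)=\overline{\sum_i \partial\varphi_i/\partial x_i}$ via the explicit formula from the beginning of Paragraph~\ref{gradué}, and conclude by evaluating Corollary~\ref{derivations_and_contractions_2} on the classes $\overline{\varphi_i-1}$, using that $\mathfrak L_{k+1}V\hookrightarrow T_{k+1}V$ is $\overline{w}\mapsto\overline{w-1}$. The only difference is that you spell out two points the paper leaves implicit (the conjugation step $x_i\varphi_i x_i^{-1}\equiv\varphi_i \pmod{\Gamma_{k+2}}$ and the compatibility of the two ``bar'' identifications), which is sound.
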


\begin{proof}
Let $\varphi \in \mathcal A_k$. Then $\tau(\varphi)$ is defined by:
\[\tau(\varphi)(X_i) = \overline{x_i^{-1} \varphi(x_i)} = \overline{\varphi_i}\ \in \Gamma_{k+1} / \Gamma_{k+2} \cong \mathfrak L_{k+1}V.\]
We have seen at the beginning of Paragraph \ref{gradué} that the trace map is given by:
\[\tr (D \varphi - \Id{}) = \sum\limits_i \frac{\partial \varphi_i}{\partial x_i}.\]
The formula of the proposition is then equivalent to:
\[\Phi \left( \sum\limits_i X_i^* \otimes \overline{\varphi_i} \right) = \overline{\sum\limits_i \frac{\partial \varphi_i}{\partial x_i}}.\]
To get this formula, we evaluate the equality given by Corollary \ref{derivations_and_contractions_2} to the elements $\overline{\varphi_i - 1}$ (keeping in mind that the inclusion of  $L_{k+1}V$ into $T_{k+1}V$ is given by $\overline{w} \mapsto \overline{w-1}$).
\end{proof}

\subsection{Stable surjectivity}

\subsubsection{Vanishing of the trace map}

Here, we show that the trace map takes values in brackets inside $TV$. This result can also be found in \cite[Prop. 5.3]{Massuyeau2}, where rational methods are used to get it. 

\begin{prop}[{\cite[Th.\ 2.1]{Bryant}}, quoted in {\cite[Th.\ 6.2]{Bartholdi}}]\label{Tr_o_tau}
Let $k \geq 2$, and let $J \in GL_m(I^k_{\Z} F_n)$. Denote by $V$ the abelianization $V = F_n^{ab} \cong \mathbb \Z^n$. Then:
\[\tr \left( J - \Id{} \right) \in [TV,TV]_k \subset V^{\otimes k} \cong I^k/I^{k+1}.\]
\end{prop}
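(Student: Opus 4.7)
The quotient $V^{\otimes k}/[TV,TV]_k$ is the module of cyclic words of length $k$, and the claim amounts to the vanishing of the degree-$k$ part of $\tr(J-\Id)$ there. The key observation is that $\tr(XY-YX)=\sum_{i,j}[X_{ij},Y_{ji}]\in[TV,TV]$, so the trace induces a Morita-invariance isomorphism $M_m(TV)/[M_m(TV),M_m(TV)]\cong TV/[TV,TV]$. Composed with the Lazard embedding $\Lie(GL_m(I^*))\hookrightarrow M_m(TV)$ from Theorem \ref{Lazard}, this gives a morphism $\overline\tr\colon\Lie(GL_m(I^*))\to TV/[TV,TV]$ of graded Lie rings to an abelian target, which therefore vanishes on every bracket. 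It suffices then to exhibit generators of $\Lie_k(GL_m(I^*))$, modulo matrix commutators, whose trace already lies in $[TV,TV]_k$.

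Two explicit classes of elements of $GL_m(I^k)$ should do the job. The off-diagonal elementary matrices $\Id+a\,e_{ij}$ (with $i\neq j$ and $a\in I^k$) have trace zero. The scalar diagonals $g\cdot\Id$ with $g\in(\Z F_n)^\times\cap(1+I^k)$ are controlled by Kaplansky's unit theorem, which is known for left-orderable groups and hence for $F_n$: $(\Z F_n)^\times=\pm F_n$. Together with the dimension property for free groups (Example \ref{dimension}), this identifies such units with $\Gamma_k(F_n)$, the minus sign being excluded by the augmentation when $k\geq 1$. Hence by Example \ref{L(Fn)_free} the image of $g-1$ in $V^{\otimes k}=I^k/I^{k+1}$ lies in $\mathfrak L_kV$, and for $k\geq 2$ one has $\mathfrak L_kV\subseteq[TV,TV]_k$, every Lie bracket in $\mathfrak L V\subset TV$ being by definition the commutator $uv-vu$.

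The main obstacle is the remaining, Whitehead-type, generation step: for $m$ large enough, every $J\in GL_m(I^k)$ should equal, modulo $[GL_m(I^*),GL_m(I^*)]\cdot GL_m(I^{k+1})$, a product of elementary matrices and scalar unit diagonals. For group rings of free groups this can be extracted from the $K$-theoretic computation of $K_1(\Z F_n)$ in the work of Stallings and Waldhausen, or obtained directly by a manipulation of shears in the spirit of Proposition \ref{LCS_of_sl_n}. A purely commutative approach through the ring abelianization $\pi\colon\Z F_n\to\Z[\Z^n]$ and the ordinary determinant only controls $\tr(J-\Id)$ modulo the larger kernel of $V^{\otimes k}\twoheadrightarrow S^kV$, so it does not by itself suffice; the non-commutative refinement described above is needed. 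Once the generation statement is in place, the proposition follows by combining it with the first two observations.
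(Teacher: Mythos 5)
Your first two observations are sound: the trace does kill matrix commutators (so it factors through an abelian quotient of $\Lie(GL_m(I^*))$), and the scalar units are handled correctly via the unit theorem for orderable groups together with the dimension property of $F_n$, which puts $g-1$ into $\mathfrak L_kV\subseteq[TV,TV]_k$. But the step you yourself flag as "the main obstacle" is a genuine gap, and the references you offer do not fill it. The computation $Wh(F_n)=0$ (Stallings, Waldhausen) tells you that any $J\in GL_m(\Z F_n)$ is, after stabilization, a product of elementary matrices \emph{with arbitrary entries in $\Z F_n$} times a diagonal unit. What you need is the filtered refinement: that $J\in GL_m(I^k)$ decomposes, modulo $GL_m(I^{k+1})$ and commutators respecting the congruence filtration, into elementary matrices with entries in $I^k$ and unit diagonals in $1+I^k$. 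This is a statement about the graded pieces of the congruence filtration $GL_m(I^*)$ -- essentially a relative/filtered $K_1$ computation -- and it is not a consequence of the absolute $K_1$; nor is there an analogue of the Bass--Milnor--Serre normal generation theorem (the engine behind Proposition \ref{LCS_of_sl_n}) for the non-commutative ring $\Z F_n$. In fact, knowing such a generation statement would amount to knowing $\Lie_k(GL_m(I^*))$ rather precisely, which is strictly more than the proposition asserts. So as written the argument is circular at its crucial point: the conclusion is reduced to an unproved generation claim of comparable or greater difficulty.

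The paper's proof avoids any generation statement. It sets $f=\tr(J-\Id{})\in V^{\otimes k}$ and evaluates: for any matrices $A_i$, the substitution $x_i\mapsto\Id{}+tA_i$ defines a morphism $F_n\to GL(t\Z[t])$ carrying $I^*$ into $t^*M(\Z[t])$, and the commutativity of $\Z[t]$ lets one invoke Proposition \ref{det_and_tr} (determinant versus trace) together with $\Lie(\Z[t]^\times_*)=0$ to conclude $\tr(f(A_1,\dots,A_n))=0$ whenever the $A_i$ come from $\Lie_1(GL(t^*\Z[t]))\cong\mathfrak{sl}(\Z)$. Bryant's combinatorial criterion (Proposition \ref{criterion_for_brackets}) -- vanishing of $\tr(f(C_1,\dots,C_n))$ for all $C_i$ in the span of the $\textbf{e}_{i,i+1}$ forces $f\in[TV,TV]_k$ -- then upgrades this commutative information to the non-commutative conclusion. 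Note that this is precisely the "purely commutative approach" you dismiss as insufficient: it does suffice, because Bryant's criterion is exactly the device that recovers membership in $[TV,TV]_k$ (rather than merely in the kernel of $V^{\otimes k}\twoheadrightarrow S^kV$) from traces of commutative evaluations. If you want to complete a proof along your lines, the realistic fix is to import Proposition \ref{criterion_for_brackets} and the evaluation argument, at which point your generation step becomes unnecessary.
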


This result relies on the following criterion:

\begin{prop}[{\cite[prop. 2.2]{Bryant}}]\label{criterion_for_brackets}
Let $f(X_1, ..., X_n) \in V^{\otimes k}$. Let $\mathcal C \subset M_k(\Z)$ be the sub-$\Z$-module generated by the $\textbf{e}_{i,i+1}$. Suppose: 
\[\forall C_i \in \mathcal C,\ \tr(f(C_1, ..., C_n)) = 0.\]
Then $f \in [TV,TV]_k$.
\end{prop}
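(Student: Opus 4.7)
The plan is to exploit the following classical identification: the quotient $V^{\otimes k}/[TV,TV]_k$ has a $\Z$-basis indexed by the cyclic equivalence classes of length-$k$ words in $X_1,\ldots,X_n$. Indeed, the commutator identity
\[
X_{i_1}X_{i_2}\cdots X_{i_k} - X_{i_2}\cdots X_{i_k}X_{i_1} = [X_{i_1},\, X_{i_2}\cdots X_{i_k}]
\]
shows that any two cyclically equivalent words agree modulo $[TV,TV]_k$, and a standard verification (filtering $[TV,TV]$ by the obvious spanning set of commutators of monomials) shows that these relations exhaust the kernel. Writing $f = \sum_w c_w\, w$ with $w$ ranging over length-$k$ words, I therefore reduce the conclusion $f \in [TV,TV]_k$ to the statement that for every cyclic class $[w]$, the cyclic sum $\sum_{w' \in [w]} c_{w'}$ vanishes.

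To extract these sums from the trace hypothesis, I evaluate $f$ at the parametric matrices $C_j = \sum_{s} a_j^{(s)}\, \textbf{e}_{s,s+1} \in \mathcal C$, with indeterminate integer coefficients $a_j^{(s)}$ and indices taken cyclically modulo $k$. Using the multiplication rule $\textbf{e}_{s,s+1}\textbf{e}_{t,t+1} = \delta_{s+1,t}\textbf{e}_{s,t+1}$, a direct calculation gives, for any word $w = X_{j_1}\cdots X_{j_k}$,
\[
\tr\bigl(C_{j_1} C_{j_2} \cdots C_{j_k}\bigr) \;=\; \sum_{s=1}^{k} a_{j_1}^{(s)} a_{j_2}^{(s+1)} \cdots a_{j_k}^{(s+k-1)},
\]
since only products that close up cyclically contribute to the trace (shorter chains land in a strictly off-diagonal matrix unit). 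Collecting terms, the coefficient of the distinguished monomial $a_{j_1}^{(1)} a_{j_2}^{(2)}\cdots a_{j_k}^{(k)}$ in $\tr f(C_1,\ldots,C_n)$ is then exactly the cyclic sum $\sum_{w'\in[w]} c_{w'}$.

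The hypothesis that $\tr f(C_1,\ldots,C_n)=0$ for every choice of $C_i\in\mathcal C$ thus says that this polynomial in the algebraically independent variables $a_j^{(s)}$ vanishes identically over $\Z$, forcing each cyclic sum to be zero. By the first step, this gives $f\in[TV,TV]_k$. The main obstacle is the identification of $V^{\otimes k}/[TV,TV]_k$ with the free $\Z$-module on cyclic classes of words (giving a basis, not merely a spanning set); once this is in hand the remaining matrix bookkeeping is essentially formal.
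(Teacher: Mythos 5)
Your argument is correct and is essentially the paper's (i.e.\ Bryant's) proof: the paper establishes the mod-$p$ analogue (Proposition \ref{criterion_for_brackets-p}) by the same two steps --- reduce to the vanishing of cyclic coefficient sums via the identification $V^{\otimes k}/[TV,TV]_k \cong (V^{\otimes k})_{\Z/k}$, then extract those sums from traces of products of the $\textbf{e}_{s,s+1}$ --- your generic coefficients $a_j^{(s)}$ playing exactly the role of its polarization map $X_i \mapsto \sum_j X_{ij}$ into $T(V\otimes V)$. One correction: the coefficient of $a_{j_1}^{(1)}\cdots a_{j_k}^{(k)}$ in $\tr f(C_1,\ldots,C_n)$ is not the cyclic sum itself but $r\cdot\sum_{w'\in[w]}c_{w'}$, where $r$ is the order of the cyclic stabilizer of $w$ (a word fixed by a shift of order $r$ produces the same monomial for $r$ distinct values of $s$; e.g.\ for $w=X_1X_1$, $k=2$, one gets $2c_w$). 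Over $\Z$ this factor is harmless since $r\geq 1$ and $\Z$ is torsion-free, so your conclusion stands, but it is precisely this factor that creates the extra $(TV)^p$ term in the characteristic-$p$ version, so it should not be suppressed.
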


The proof can be found in \cite{Bryant}. The reader is also referred to the proof of Proposition \ref{criterion_for_brackets-p}, which is the same proof, adapted to the case of positive carateristic.

\begin{proof}[Proof of Proposition \ref{Tr_o_tau}]
The main idea is to use evaluations into commutative algebras to be able to use Proposition \ref{det_and_tr}, and to then get back to the non-commutative setting by using the above criterion.

\smallskip

Let $\Id{} + tA_i \in GL_k(t\k[t])$. There is an evaluation morphism $x_i \mapsto \Id{} + tA_i$ from $F_n$ to $GL_k(t\Z[t])$, extending to a morphism from $\k F_n$ to $M_k(t\k[t])$ sending $I^*$ to $t^* M_k(\k[t])$. Taking congruence groups, we get an evaluation morphism:
\[ev_{\Id{} + tA_i}: GL_m(I^* F_n) \longrightarrow  GL_m(t^* M_k(\k[t])) = GL_{mk}(t^* \k[t]). \]
There is a commutative diagram:
\[\begin{tikzcd}
\Lie(GL_m(I^* F_n)) \ar[r, hook, "(-)- \Id{}"] \ar[d, "ev_{\Id{} + tA_i}"]
&M_m(TV) \ar[r, "Tr"] \ar[d, "ev_{tA_i}"]
&TV \ar[d, "ev_{tA_i}"]\\
\Lie(GL_{mk}(t^* \k[t])) \ar[r, hook, "(-)- \Id{}"] \ar[d, "det"]
&M_{mk}(\k [t]) \ar[r, "Tr_m"] \ar[d, "Tr"] 
&M_m(\k[t]) \ar[ld, "Tr"] \\
\Lie(\k[t]^\times_*)=0 \ar[r, hook, "(-)- 1"]
&\k[t]
\end{tikzcd}\]
Here, we identify $\gr(I^*F_n)$ with $TV$ by $x_i \mapsto 1+X_i$. We also identify $\gr(t^*\k[t])$ with $\k[t]$. The evaluation $x_i \mapsto \Id{} + tA_i$ thus induces $X_i \mapsto tA_i$ between the associated graded. The bottom-left square is just the one in Proposition \ref{det_and_tr}. The map $Tr_m$ is the usual trace when the base algebra is $M_m(\k[t])$.

\smallskip

Let $\k = \Z$ and $f = \tr(J - \Id{}) \in V^{\otimes k}$. The commutativity of the above diagram gives:
\[0 = \tr(f(tA_i)) = t^k\tr(f(A_i)).\]
As a consequence, $\tr(f(A_i)) = 0$, for any $\Id{} + tA_i \in GL_k(t\Z[t])$. We can then evaluate this at $t = 0$ to get: $\tr(f(\pi A_i)) = 0$. This evaluation $\pi$ is the map:
\[\pi: \Lie_1(GL_k(t^*\Z[t]) \hookrightarrow M_k \left(t\Z[t]/t^2\Z[t] \right) \cong M_k(\Z).\]
Using Proposition \ref{gr_of_sl_n} and Remark \ref{SL_n_et_GL_n}, we see that its image is exactly $\mathfrak{sl}_n(\Z)$, so the conclusion follows from the above criterion (\ref{criterion_for_brackets}).
\end{proof}

Because of Proposition \ref{Tr_o_tau}, we will consider the trace map as taking values in the abelianization $TV^{ab} = TV/[TV,TV]$. As $[TV,TV]_k$ is generated by the elements:
\[[X_{i_1} \otimes \cdots \otimes X_{i_p}, X_{i_{p+1}} \otimes \cdots \otimes X_{i_k}] = X_{i_1} \otimes \cdot \otimes X_{i_k} - t^p \cdot X_{i_1} \otimes \cdot \otimes X_{i_k},\]
where $t = \bar 1 \in \Z /k$, the module $TV^{ab}$ is the module of \emph{cyclic powers} $C_*V$:
\[(TV^{ab})_k = C_k V:= V^{\otimes k}/(\Z /k).\] 
The conclusion of Proposition \ref{Tr_o_tau} becomes, in this context: $\tr \left( J - \Id{} \right) = 0 \in C_*V.$

\subsubsection{Linear algebra}\label{par-linear_alg}

Consider the Johnson morphism 
$\tau': \Lie \left(\Gamma_{IA_n} \right) \longrightarrow \Der_*\left(\Lie(F_n) \right)$
(cf. \ref{déf_tau'}). 
The morphism $\tau'_1$ is an isomorphism (see Proposition \ref{IAn^ab}). Moreover, $\Lie \left(\Gamma_{IA_n} \right)$ is generated in degree $1$ (cf. \ref{engdeg1}). As a consequence, the image of $\tau'$ is exactly the sub-Lie ring generated in degree $1$ inside $\Der \left(\Lie(F_n) \right)$. As $\Lie(F_n)$ is the free Lie ring $\mathfrak LV$, the study of $\coker(\tau')$ is solely a problem of linear algebra.

\medskip

Recall from Proposition \ref{algebraic_description_of_tr} that the trace map can be seen as the composite of the Johnson morphism $\tau: \Lie_k(\mathcal A_*(F_n)) \rightarrow \Der_k(\mathfrak LV) \cong V^* \otimes \mathfrak L_{k + 1}V$ with:
\[\tr_M: V^* \otimes \mathfrak L_{k + 1}V \overset{\iota}{\longrightarrow} 
         V^* \otimes V^{\otimes k + 1}    \overset{\Phi}{\longrightarrow}
         V^{\otimes k}                    \overset{\pi}{\longrightarrow}
         C_kV:= V^{\otimes k}/(\Z /(k)),  \]
where $\iota$ and $\pi$ denote the canonical maps. All these morphisms are obviously $\GL_n(\Z)$-equivariant (with respect to the canonical actions).

\begin{nota}
Let $\mathfrak I$ denote the image of $\tau'$, which is the sub-Lie ring generated in degree $1$ inside $\Der(\mathfrak LV)$.
\end{nota}

The following proposition can be seen as a consequence of Proposition \ref{Tr_o_tau}. Precisely, $\mathfrak I = \ima(\tau') \subseteq \ima(\tau)$, and Proposition \ref{Tr_o_tau} implies that $\tr_M \circ \tau = \tr$ vanishes.
\begin{prop}\label{Tr(I)}
For every $k \geq 2$, $\tr_M(\mathfrak I_k) = \{0\}.$
\end{prop}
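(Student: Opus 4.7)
The proposition is essentially a corollary of what has already been established, and the excerpt itself hints at the structure of the argument. My plan is to assemble the pieces as follows.

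First, I will observe that the Johnson morphism $\tau'$ associated with the action of $\Gamma_*(IA_n)$ on $\Gamma_*(F_n)$ factors as $\tau' = \tau \circ i_*$, where $i_* : \Lie(\Gamma_*(IA_n)) \longrightarrow \Lie(\mathcal A_*(F_n))$ is the canonical morphism of graded Lie rings induced by the inclusion $\Gamma_*(IA_n) \subseteq \mathcal A_*(F_n)$. Consequently $\mathfrak I = \ima(\tau') \subseteq \ima(\tau)$, so it suffices to prove $\tr_M(\ima(\tau_k)) = 0$ for $k \geq 2$.

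Next, I will combine the algebraic description of the trace given by Proposition \ref{algebraic_description_of_tr}, namely $\tr = \Phi \circ \iota \circ \tau$, with the definition $\tr_M = \pi \circ \Phi \circ \iota$, to obtain the identity
\[
\tr_M \circ \tau \;=\; \pi \circ \tr,
\]
where $\pi : TV \twoheadrightarrow C_*V = TV / [TV,TV]$ is the canonical projection. Thus $\tr_M \circ \tau$ is the class modulo commutators of the original Bartholdi trace.

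Then I invoke Proposition \ref{Tr_o_tau}: for any $\varphi \in \mathcal A_k(F_n)$ with $k \geq 2$, the Jacobian $D\varphi$ lies in $GL_n(I^k F_n)$ and $\tr(D\varphi - \Id{}) \in [TV,TV]_k$. Passing to the associated graded of $\mathcal A_*$, this says exactly that the degree-$k$ component of $\tr$ takes values in $[TV,TV]_k$ when $k \geq 2$, so $\pi \circ \tr$ vanishes in these degrees. Combining with the previous step, $\tr_M \circ \tau_k = 0$ for $k \geq 2$, and the inclusion $\mathfrak I_k \subseteq \ima(\tau_k)$ from the first step yields $\tr_M(\mathfrak I_k) = 0$, as desired.

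There is no serious obstacle here: the substantive work has been done in Proposition \ref{Tr_o_tau} (the Bryant--Bartholdi vanishing statement) and in Proposition \ref{algebraic_description_of_tr} (matching Bartholdi's trace with the Morita-style algebraic trace). The only point that deserves explicit mention in the write-up is the factorization $\tau' = \tau \circ i_*$, which is immediate from the functoriality of the Johnson morphism with respect to the morphism of actions obtained by restricting the universal action $\mathcal A_*(F_n) \circlearrowright \Gamma_*(F_n)$ along $i$.
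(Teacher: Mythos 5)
Your proposal is correct and follows exactly the paper's own argument: the paper likewise deduces the statement from $\mathfrak I = \ima(\tau') \subseteq \ima(\tau)$ together with the vanishing $\tr_M \circ \tau = \tr$ given by Proposition \ref{Tr_o_tau} (via the identification $\tr = \Phi \circ \iota \circ \tau$ of Proposition \ref{algebraic_description_of_tr}). Your explicit tracking of the projection $\pi$ onto $C_*V$ is only a notational elaboration of the paper's convention of viewing the trace as valued in $TV^{ab}$.
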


\subsubsection{Stable cokernel of \texorpdfstring{$\tau'$}{t'} and stable surjectivity}\label{coker}

Let $k \geq 2$. Using Proposition \ref{Tr(I)}, we get a commutative diagram with exact rows:
\[\begin{tikzcd}
\mathfrak I_k                    \ar[r, hook] \ar[d, dashed, "\phi"]
& V^* \otimes \mathfrak L_{k+1}V \ar[r, two heads]     \ar[d, "\Phi"]
& \coker(\tau'_k)                \ar[d, dashed , "\overline{\Phi}"] \\
{[TV, TV]_k}      \ar[r, hook]   
& V^{\otimes k}   \ar[r, two heads, "\pi"]               
& C_k V.
\end{tikzcd}\]
            
In \cite{Satoh2}, Satoh shows:
\[
\left\{
\begin{array}{lll}
  \text{For } n \geq k+1, &\Phi \text{ is surjective,}    & (\text{lemma } 3.2)\\
  \text{For } n \geq k+2, &\phi \text{ is surjective,}    & (\text{prop. } 3.2)\\
  \text{For } n \geq k+2, &\ker \Phi \subseteq \mathfrak I.& (\text{Prop.\ } 3.3)
\end{array}
\right.
\]
  
\medskip
  
His Theorem 3.1 is still true over $\Z$:

\begin{prop}\label{prop-coker} 
Let $k \geq 2$ and $n \geq k+2$ be integers. Then $\overline{\Phi}$ is a $GL_n(\Z)$-equivariant isomorphism:
\[\coker (\tau'_k) \cong C_kV.\]
\end{prop}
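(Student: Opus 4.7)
The plan is a straightforward snake-lemma-type diagram chase in the displayed commutative diagram with exact rows, using the three results of Satoh quoted just above the statement. Since Satoh's work has already done all the real labour, there is no serious obstacle left; the whole point is to observe that his results have been shown to hold over $\Z$ in our setting (the rational hypothesis in \cite{Satoh2} is not used in those three statements), and then to assemble them.

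For surjectivity of $\overline{\Phi}$, I would argue as follows: since $n \geq k+2 \geq k+1$, Satoh's lemma 3.2 gives that $\Phi$ is surjective. The composite $\pi\circ\Phi$ is then surjective, and by the commutativity of the right-hand square it factors as $\overline{\Phi}$ composed with the surjection $V^*\otimes\mathfrak L_{k+1}V \twoheadrightarrow \coker(\tau'_k)$, so $\overline{\Phi}$ is surjective.

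For injectivity, let $y \in \coker(\tau'_k)$ with $\overline{\Phi}(y)=0$, and lift $y$ to some $x \in V^*\otimes\mathfrak L_{k+1}V$. The vanishing of $\pi(\Phi(x))$ means $\Phi(x)\in [TV,TV]_k$. Satoh's proposition 3.2 (for $n\geq k+2$) says that $\phi$ is surjective, so pick $z\in \mathfrak I_k$ with $\phi(z)=\Phi(x)$; then $x-z\in\ker\Phi$. Finally, Satoh's proposition 3.3 (again for $n\geq k+2$) gives $\ker\Phi\subseteq \mathfrak I_k$, so $x-z\in \mathfrak I_k$, hence $x\in \mathfrak I_k$ and $y=0$ in $\coker(\tau'_k)$.

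The $\GL_n(\Z)$-equivariance is automatic: the Johnson morphism $\tau$ is $\GL_n(\Z)$-equivariant by Lemma \ref{tau_inj} (its restriction $\tau'$ is too), and the maps $\iota$, $\Phi$, $\pi$ are all built from the canonical $\GL_n(\Z)$-actions on $V$ and $V^*$, so every arrow in the diagram is equivariant and $\overline\Phi$ inherits equivariance. The only step that needs any thought is to double-check that Satoh's three results, originally stated over $\mathbb Q$, go through over $\Z$ without modification; this is exactly the observation announced in the introduction (\textquotedblleft We observe that calculations from \cite{Satoh2} work over $\Z$\textquotedblright) and should be justified either by inspecting the proofs in \cite{Satoh2} or by a remark that the arguments involve only integral combinatorics on Lie words and contractions.
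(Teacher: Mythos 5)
Your proof is correct and is essentially the paper's argument: the paper obtains $\ker\overline{\Phi}=\coker\overline{\Phi}=0$ by applying the snake lemma to the diagram augmented with $\ker\Phi$ (zero map to $K$ since $\ker\Phi\subseteq\mathfrak I_k$) and the cokernels, using exactly the same three results of Satoh, while you simply unpack that snake-lemma application into an explicit diagram chase. The equivariance observation also matches the paper's setup.
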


\begin{proof}
Let us denote by $K$ (resp.\ $L$) the kernel of $\overline{\Phi}$ (resp.\ its cokernel). There is a commutative diagram in $GL_n(\Z)-\kMod[\Z]$:

\[\begin{tikzcd}
&\ker \Phi \ar[r, dashed, "0"] \ar[d] \ar[ld, dashed]
&K                        \ar[d] \\
\mathfrak I_k  \                  \ar[r, hook] \ar[d, dashed, "\phi"]
& V^* \otimes \mathfrak L_{k+1}V \ar[r, two heads]     \ar[d, "\Phi"]
& \coker(\tau')                                  \ar[d, dashed, "\overline{\Phi}"]  \\
{[TV, TV]_k}        \ar[r, hook] \ar[d]   
& V^{\otimes k}   \ar[r, two heads, "\pi"] \ar[d]             
& C_k V                           \ar[d] \\
0
&0 
&L. 
\end{tikzcd}\]

The snake lemma ensures that $K$ and $L$ are zero: $\overline{\Phi}$ is an isomorphism.
\end{proof}

\medskip

We can now state our main result:
\begin{theo}\label{stable_surj}
Let $k + 2 \leq n$. Then the canonical morphism
\[\Lie_k (IA_n) \longrightarrow \Lie_k (\mathcal A_*(F_n))\]
is surjective, and $\tau$ induces an isomorphism:
$\Lie_k (\mathcal A_*(F_n)) \cong \mathfrak I_k.$
\end{theo}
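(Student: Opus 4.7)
The plan is to assemble the theorem from the three inputs already established: the identity $\tr = \tr_M \circ \tau$ via the identification $\Phi \circ \iota \circ \tau = \tr$ (Proposition~\ref{algebraic_description_of_tr}), the vanishing $\tr \in [TV,TV]$ (Proposition~\ref{Tr_o_tau}), and the isomorphism $\overline{\Phi}\colon \coker(\tau'_k) \cong C_k V$ in the stable range (Proposition~\ref{prop-coker}). Since $\tau$ is injective (Lemma~\ref{tau_inj}), both assertions of the theorem reduce to proving the single equality $\ima(\tau_k) = \mathfrak I_k$. The inclusion $\mathfrak I_k = \ima(\tau'_k) \subseteq \ima(\tau_k)$ is immediate from the factorisation $\tau' = \tau \circ i_*$, so only the reverse inclusion needs attention.

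For this reverse inclusion, I would fix $x \in \Lie_k(\mathcal A_*(F_n))$ and chase its class $[\tau(x)] \in \coker(\tau'_k)$ through the commutative square of paragraph~\ref{coker}. The right-hand vertical arrow of that square is exactly $\overline{\Phi}$, so $\overline{\Phi}([\tau(x)]) = \pi(\Phi(\iota(\tau(x)))) = \pi(\tr(x))$. By Proposition~\ref{Tr_o_tau}, $\tr(x)$ lies in $[TV,TV]_k = \ker(\pi)$, hence $\overline{\Phi}([\tau(x)]) = 0$. When $n \geq k+2$, Proposition~\ref{prop-coker} says $\overline{\Phi}$ is an isomorphism, in particular injective, so $[\tau(x)] = 0$, i.e.\ $\tau(x) \in \mathfrak I_k$. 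This yields the desired inclusion $\ima(\tau_k) \subseteq \mathfrak I_k$.

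Once $\ima(\tau_k) = \mathfrak I_k$ is established, injectivity of $\tau$ (Lemma~\ref{tau_inj}) directly gives the isomorphism $\Lie_k(\mathcal A_*(F_n)) \cong \mathfrak I_k$. For the surjectivity of $i_*$ in degree $k$, observe that $\tau(\ima(i_*)) = \ima(\tau'_k) = \mathfrak I_k = \ima(\tau_k)$, and cancelling the injective map $\tau$ forces $\ima(i_*) = \Lie_k(\mathcal A_*(F_n))$. The hard part of the argument has been pushed into Proposition~\ref{prop-coker} (which in turn relies on the integral versions of Satoh's lemma~3.2 and propositions~3.2 and~3.3) and Proposition~\ref{Tr_o_tau} (Bryant's criterion together with an evaluation argument into $GL_k(t\Z[t])$); once these are available, the theorem itself is only a diagram chase, so I do not expect any further obstacle.
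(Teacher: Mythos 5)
Your proposal is correct and follows essentially the same route as the paper: the key point in both is that Proposition~\ref{Tr_o_tau} forces $\ima(\tau_k)$ into $\ker(\tr_M)_k$, while the stable results of paragraph~\ref{coker} (equivalently, the injectivity of $\overline{\Phi}$ in Proposition~\ref{prop-coker}) identify that kernel with $\mathfrak I_k = \ima(\tau'_k)$, after which injectivity of $\tau$ yields both assertions. Your explicit coset chase through $\coker(\tau'_k)$ is just a reformulation of the paper's inclusion $\ima\tau_k \subseteq \ker\tr_k = \ima\tau'_k$.
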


\begin{rmq}
Basis being chosen, there is an injection of $F_n$ in $F_{n+1} \cong F_n * \Z$. An automorphism $\varphi$ of $F_n$ can be extended to an automorphism $\varphi * \Id{}$ of $F_{n+1}$. This induces injections $IA_n \hookrightarrow IA_{n+1}$which in turn induce morphisms $\Lie (\mathcal A_*(F_n)) \rightarrow \Lie (\mathcal A_*(F_{n+1}))$. Taking the colimit over $n$, we can define a Lie ring $\Lie^{st}(\mathcal A_*)$. In the same way, we can define injections from $\Der(\mathcal L(F_n^{ab}))$ into $\Der(\mathcal L(F_{n+1}^{ab}))$ and take the colimit $\mathfrak I^{st}$ of the sub-algebras generated in degree one.
With this point of view, the isomorphisms of Theorem \ref{stable_surj} give an isomorphism between graded Lie algebras: 
\[\tau^{st}: \Lie^{st}(\mathcal A_*) \cong \mathfrak I^{st},\]
meaning exactly that $\Lie^{st}(\mathcal A)$ is generated in degree one.

In fact, all the constructions appearing here are functors on the category denoted by $\textbf{S}(\Z)$ in \cite[section 7]{Djament-finitude}, where it is shown (using methods similar to the ones of \cite{Church-noeth}) that these functors are finitely supported. This implies the equivalence between $\tau_k^{st}$ being an isomorphism and $\tau_k$ being one for $n$ big enough.
\end{rmq}

\begin{proof}[Proof of theorem \ref{stable_surj}]
Consider the commutative diagram:
\[\begin{tikzcd}
\Lie(IA_n) \ar[r, "i_*"] \ar[rd, swap, "\tau'"] 
&\Lie(\mathcal A_*) \ar[d, hook, "\tau"] \\
&\Der(\mathfrak LV).
\end{tikzcd}\]
The image $\mathfrak I$ of $\tau'$ is the sub-Lie ring generated in degree one inside $\Der(\mathfrak LV)$. Using the results quoted in Paragraph $\ref{coker}$, we see that in degrees $k \leq n-2$, it also is the kernel of the trace map.
Proposition (\ref{Tr_o_tau}) tells us exactly that $\tr \circ \tau = 0$, so that $\ima \tau_k \subseteq \ker \tr_k = \ima \tau'_k$ when $k \geq n-2$. As a consequence, $\ima \tau = \ima \tau'$. As $\tau$ is injective (\ref{tau_inj}), it is an isomorphism onto its image, hence the result.
\end{proof}

\subsection{Automorphisms of free nilpotent groups}

Automorphisms of nilpotent groups are easy to deal with, due to the following classical fact:

\begin{lem}\label{iso_test_on_ab}
Let $G$ be a finite-type nilpotent group. An endomorphism $\varphi \in \End(G)$ is an automorphism if and only if the induced morphism $\varphi^{ab} \in \End(G^{ab})$ is.
\end{lem}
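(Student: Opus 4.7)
The "only if" direction is immediate from functoriality of abelianization: $(\psi\varphi)^{ab} = \psi^{ab}\varphi^{ab}$, so an inverse of $\varphi$ gives an inverse of $\varphi^{ab}$. The plan for the converse is an induction on the nilpotency class $c$ of $G$. The base case $c=1$ is vacuous since then $G = G^{ab}$ and $\varphi = \varphi^{ab}$.

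For the inductive step with $c \geq 2$, I would consider the short exact sequence
\[1 \longrightarrow \Gamma_c(G) \longrightarrow G \longrightarrow G/\Gamma_c(G) \longrightarrow 1.\]
The subgroup $\Gamma_c(G)$ is characteristic, so $\varphi$ restricts to an endomorphism of $\Gamma_c(G)$ and descends to an endomorphism $\bar\varphi$ of the quotient. Since $\Gamma_c(G) \subseteq [G,G]$ for $c \geq 2$, the quotient $G/\Gamma_c(G)$ has the same abelianization $G^{ab}$ as $G$, and $\bar\varphi^{ab} = \varphi^{ab}$ is an automorphism. As $G/\Gamma_c(G)$ is finitely generated and nilpotent of class at most $c-1$, the inductive hypothesis applies and shows that $\bar\varphi$ is an automorphism. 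By the five lemma it suffices to prove that the restriction $\varphi|_{\Gamma_c(G)}$ is an automorphism.

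Now $\Gamma_{c+1}(G) = 1$ gives $\Gamma_c(G) = \Lie_c(G)$, and the restriction $\varphi|_{\Gamma_c(G)}$ coincides with $\Lie_c(\varphi)$. This is an endomorphism of a \emph{finitely generated abelian} group (using Proposition \ref{engdeg1}). To see that it is surjective, recall from the same proposition that $\Lie(G)$ is generated in degree $1$ as a Lie ring; since $\Lie_1(\varphi) = \varphi^{ab}$ is surjective, the Lie ring morphism $\Lie(\varphi)$ is surjective in every degree, in particular in degree $c$. A surjective endomorphism of a finitely generated abelian group is automatically an isomorphism (such groups are Hopfian, e.g.\ by the classification), so $\varphi|_{\Gamma_c(G)}$ is an automorphism. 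The five lemma then gives the result. The only slightly delicate point is the Hopfian property invoked at the end, but we need it only in the very elementary finitely generated abelian setting, avoiding any appeal to the harder Hopfian property of finitely generated nilpotent groups.
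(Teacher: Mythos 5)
Your proof is correct and takes essentially the same route as the paper: both arguments use that $\Lie(G)$ is generated in degree one to get surjectivity of $\Lie(\varphi)$ in each degree, deduce bijectivity from the Hopfian property of finitely generated abelian groups, and conclude by the five lemma applied to the central extensions coming from the lower central series. Your induction on the nilpotency class is just a repackaging of the paper's induction over the quotients $G/\Gamma_k G$.
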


\begin{proof}
If $\varphi$ is an automorphism then $\varphi^{ab}$ has to be, with $(\varphi^{ab})^{-1} = (\varphi^{-1})^{ab}$. Conversely, suppose that $\varphi^{ab}$ is an automorphism. This means that $\Lie_1(\varphi)$ is. Since $\Lie(G)$ is generated in degree one, $\Lie(\varphi)$ is surjective. But each $\Lie_k(G)$ is abelian of finite type, so each $\Lie_k(\varphi)$, being surjective, has to be bijective (it is obviously the case on the torsion part, which is finite, and it also is on the free abelian part, for reasons of rank). The lemma then follows by induction from the five-lemma applied to:
\[\begin{tikzcd}
\Lie_k(G) \ar[r, hook] \ar[d, "\Lie_k(\varphi)"]
& G/\Gamma_{k+1}G \ar[r, two heads] \ar[d, "\bar\varphi"]
& G/\Gamma_k G \ar[d, "\bar\varphi"] \\
\Lie_k(G) \ar[r, hook] 
& G/\Gamma_{k+1}G \ar[r, two heads]
& G/\Gamma_k G. 
\end{tikzcd}\] 
This induction process stops since there is a $c$ such that $G = G/\Gamma_{c+1}G$.
\end{proof}

\begin{defi}
The \emph{pro-nilpotent completion} of a group $G$ is:
$\widehat G := \varprojlim \left(G/\Gamma_k G \right).$
\end{defi}
The completion $\widehat G$ is canonically filtered by the $\bar\Gamma_j G := \varprojlim \left(\Gamma_j G/\Gamma_k G \right)$. This filtration is its \emph{closed lower central series}, defined as the closure of the lower central series. It is minimal amongst \emph{closed} strongly central filtrations on $\widehat G$.
An endomorphism of $\widehat G$ is continuous if and only if it preserve this filtration.
\begin{lem}\label{iso_test_on_ab-c}
Let $G$ be a group. A continuous endomorphism $\varphi$ of $\widehat G$ is an automorphism if and only if the induced morphism $\varphi^{ab} \in \End(G^{ab})$ is.
\end{lem}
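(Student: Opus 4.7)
The plan is to reduce this \emph{completed} statement to the previous \emph{nilpotent} lemma by exploiting the inverse limit description $\widehat G = \varprojlim G/\Gamma_k G$, much as one reduces questions about profinite groups to their finite quotients.

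The ``only if'' direction is immediate: a continuous automorphism $\varphi$ has a continuous inverse, both preserve $\bar\Gamma_2 G = \overline{[\widehat G,\widehat G]}$, and descending to the quotient $\widehat G/\bar\Gamma_2 G \cong G^{ab}$ yields two mutually inverse endomorphisms of $G^{ab}$, the first of which is $\varphi^{ab}$ by definition.

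For the ``if'' direction, continuity of $\varphi$ means it preserves the closed lower central series, hence descends to an endomorphism $\bar\varphi_k$ of each nilpotent quotient $G/\Gamma_k G$. Since $\widehat G = \varprojlim G/\Gamma_k G$ and $\varphi = \varprojlim \bar\varphi_k$, it suffices to show that each $\bar\varphi_k$ is an automorphism: the inverses $\bar\varphi_k^{-1}$ automatically form a compatible system (because each projection $G/\Gamma_{k+1}G \twoheadrightarrow G/\Gamma_k G$ intertwines $\bar\varphi_{k+1}$ and $\bar\varphi_k$, hence their inverses), and the resulting limit endomorphism is continuous and inverse to $\varphi$. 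To show $\bar\varphi_k$ is an automorphism, I would invoke Lemma \ref{iso_test_on_ab}: for $k \geq 2$ one has $(G/\Gamma_k G)^{ab} = G^{ab}$, and the abelianization of $\bar\varphi_k$ is precisely $\varphi^{ab}$, which is an automorphism by hypothesis.

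The one subtle point — and the main thing to verify — is the finite-type hypothesis required to apply Lemma \ref{iso_test_on_ab} to each quotient $G/\Gamma_k G$. By Proposition \ref{engdeg1}, finite generation of $G$ implies that each $\Lie_k(G)$ is finitely generated, hence each $G/\Gamma_k G$ is a finite-type nilpotent group. This is the implicit hypothesis needed; in the intended application ($G = F_n$), it is of course satisfied. Apart from this check, the argument is a mechanical transfer of Lemma \ref{iso_test_on_ab} through the inverse limit.
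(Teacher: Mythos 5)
Your proof is correct and follows essentially the same route as the paper's (much terser) argument: reduce to the induced endomorphisms of the nilpotent quotients $G/\Gamma_k G$ via the inverse limit and apply Lemma \ref{iso_test_on_ab} there. Your observation that a finite-type hypothesis on $G$ is implicitly needed (and is satisfied in the intended application to $F_n$) is a legitimate and worthwhile point that the paper's statement glosses over.
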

\begin{proof}
Such an endomorphism is an automorphism if and only if the associated morphism between projective system is. These are the induced endomorphisms of the $G/\Gamma_k G$, which are nilpotent groups. This condition amounts to $\varphi$ inducing an isomorphism on $\hat G/\bar\Gamma_2 = G^{ab}$, by Lemma \ref{iso_test_on_ab}.
\end{proof}

In fact, we can readily deduce the following explicit description of the group $\Aut_{\mathcal C^0}(\widehat G)$ of continuous automorphisms of $\widehat G$:
\begin{prop}\label{proj_lim_of_Aut}
The canonical map is an isomorphism:
\[\Aut_{\mathcal C^0}(\widehat G) \cong \varprojlim \left(\Aut(G/\Gamma_k G)\right).\]
\end{prop}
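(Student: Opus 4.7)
The plan is to construct an explicit two-sided inverse to the canonical map using the universal property of the inverse limit, which is essentially formal given the setup.

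First, I would make the canonical map explicit. A continuous automorphism $\varphi$ of $\widehat G$ preserves the closed lower central series $\bar\Gamma_*G$ by the minimality property recalled just before Lemma \ref{iso_test_on_ab-c}, so it descends to a morphism on each quotient $\widehat G/\bar\Gamma_{k+1}G$. The key identification $\widehat G/\bar\Gamma_{k+1}G \cong G/\Gamma_{k+1}G$ follows from writing $\widehat G = \varprojlim_j G/\Gamma_j G$ and $\bar\Gamma_{k+1}G = \varprojlim_{j \geq k+1}\Gamma_{k+1}G/\Gamma_j G$, so that the quotient is the limit of the eventually constant system with value $G/\Gamma_{k+1}G$. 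Under this identification $\varphi$ induces $\varphi_k \in \Aut(G/\Gamma_{k+1}G)$ (it is an automorphism by functoriality applied to $\varphi^{-1}$), and the naturality of the construction yields a compatible family, hence an element of $\varprojlim \Aut(G/\Gamma_k G)$.

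For the inverse, given any compatible family $(\varphi_k) \in \varprojlim \Aut(G/\Gamma_k G)$, I would apply the universal property of $\widehat G = \varprojlim G/\Gamma_k G$ to obtain a continuous endomorphism $\varphi$ of $\widehat G$. The same construction applied to the compatible family $(\varphi_k^{-1})$ produces a continuous endomorphism $\psi$, and by the uniqueness clause of the universal property both composites $\psi \circ \varphi$ and $\varphi \circ \psi$ coincide with $\Id{\widehat G}$ (since they induce the identity on every quotient $G/\Gamma_k G$). Hence $\varphi$ lies in $\Aut_{\mathcal C^0}(\widehat G)$. Alternatively, one may produce the endomorphism $\varphi$ as above and then invoke Lemma \ref{iso_test_on_ab-c} to upgrade it to an automorphism, using that $\varphi^{ab} = \varphi_2$ is one by hypothesis.

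It remains to check that these two constructions are mutually inverse, which is immediate from the uniqueness in the universal property of the inverse limit on one side and from functoriality on the other. There is no real obstacle: the entire content is the commutation of $\Aut(-)$ with this particular inverse limit, which works precisely because continuity of endomorphisms of $\widehat G$ is equivalent to compatibility with every projection to $G/\Gamma_k G$. The only point that warrants care is the identification $\widehat G/\bar\Gamma_{k+1}G \cong G/\Gamma_{k+1}G$ mentioned above.
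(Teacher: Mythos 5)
Your proof is correct and follows essentially the same route as the paper, which treats the proposition as an immediate consequence of identifying continuous endomorphisms of $\widehat G$ with compatible families of endomorphisms of the quotients $G/\Gamma_k G$ (the content of the proof of Lemma \ref{iso_test_on_ab-c}). Your extra care about the identification $\widehat G/\bar\Gamma_{k+1}G \cong G/\Gamma_{k+1}G$ and the explicit inverse via the universal property just fills in details the paper leaves implicit.
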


\medskip

Let $G = F_n$ be a free group of finite type. It is residually nilpotent, so it embeds into its completion $\widehat F_n$. A continuous endomorphism of $\widehat F_n$ is uniquely determined by its (arbitrary) values on the topological generators $x_i$. 

Let us denote by $F_{n,c}$ the free $c$-nilpotent group $F_n/\Gamma_{c+1}F_n$. Using Lemmas \ref{iso_test_on_ab} and \ref{iso_test_on_ab-c}, we can show two surjectivity results for automorphisms of free nilpotent groups:

\begin{prop}\label{surj_between_Aut}
The canonical morphisms $\Aut(F_{n,c}) \rightarrow \Aut(F_{n,c-1})$ are surjective.
\end{prop}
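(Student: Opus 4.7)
The plan is to lift an automorphism by choosing arbitrary lifts of the images of the generators, using freeness of $F_n$ to obtain a well-defined endomorphism, and then invoking Lemma \ref{iso_test_on_ab} to upgrade this endomorphism to an automorphism.

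Concretely, fix a basis $x_1, \ldots, x_n$ of $F_n$ and denote its image in $F_{n,c}$ (resp.\ $F_{n,c-1}$) again by $x_i$ (resp.\ $\bar x_i$). Let $\bar\varphi \in \Aut(F_{n,c-1})$ be given. First I would choose, for each $i$, an arbitrary lift $y_i \in F_{n,c}$ of $\bar\varphi(\bar x_i) \in F_{n,c-1}$ along the canonical surjection $p : F_{n,c} \twoheadrightarrow F_{n,c-1}$. Then, using the universal property of the free group $F_n$, the assignment $x_i \mapsto y_i$ defines a group homomorphism $F_n \to F_{n,c}$, which factors uniquely through the quotient $F_{n,c} = F_n/\Gamma_{c+1}(F_n)$ since the target is $c$-nilpotent. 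This yields an endomorphism $\varphi \in \End(F_{n,c})$ satisfying $p \circ \varphi = \bar\varphi \circ p$ by construction.

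Next I would check that $\varphi$ is an automorphism. Since $F_{n,c}$ is a finite-type nilpotent group, Lemma \ref{iso_test_on_ab} reduces this to verifying that $\varphi^{ab} \in \End(F_{n,c}^{ab})$ is an automorphism. For $c \geq 2$, both $F_{n,c}^{ab}$ and $F_{n,c-1}^{ab}$ identify canonically with $F_n^{ab}$, and under these identifications $\varphi^{ab} = \bar\varphi^{ab}$ (because $p$ induces the identity on abelianizations). Since $\bar\varphi$ is an automorphism of the nilpotent group $F_{n,c-1}$, the same lemma implies that $\bar\varphi^{ab}$ is an automorphism of $F_n^{ab}$, hence so is $\varphi^{ab}$, and therefore $\varphi \in \Aut(F_{n,c})$. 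The case $c = 1$ is vacuous, as $F_{n,0}$ is trivial. Finally, $\varphi$ is by construction a preimage of $\bar\varphi$ under the canonical map $\Aut(F_{n,c}) \to \Aut(F_{n,c-1})$, establishing surjectivity.

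There is no real obstacle here: the argument is standard once Lemma \ref{iso_test_on_ab} is available. The only subtle point worth highlighting is the reason why one is \emph{allowed} to lift generators arbitrarily, namely that $F_n$ is free while $F_{n,c}$ is the initial $c$-nilpotent group on $n$ generators, so any choice of targets $y_i$ produces a well-defined homomorphism, and automorphism-ness is then cheaply detected on the abelianization.
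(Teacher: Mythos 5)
Your proof is correct and follows essentially the same route as the paper's: lift the images of a free basis arbitrarily along $F_{n,c} \twoheadrightarrow F_{n,c-1}$, use freeness to get an endomorphism of $F_{n,c}$, and apply Lemma \ref{iso_test_on_ab} after observing that it induces the same map as $\bar\varphi$ on $F_n^{ab}$. The extra care you take with the identification of abelianizations and the trivial case $c=1$ is fine but not needed beyond what the paper already records.
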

\begin{proof}
Let $(x_i)$ be a free basis of $F_n$. Let $\varphi \in \Aut(F_n/\Gamma_c(F_n))$. Lift the elements $\varphi(\bar x_i)$ to elements $t_i$ of $F_n/\Gamma_{c+1}(F_n)$. Then define the endomorphism $\tilde\varphi$ of $F_n/\Gamma_{c+1}(F_n)$ by  $\bar x_i \mapsto t_i$. Since $\varphi$ and $\tilde\varphi$ induce the same endomorphism of $F_n^{ab}$, Lemma \ref{iso_test_on_ab} implies that $\tilde\varphi$ is an isomorphism.
\end{proof}

\begin{prop}[{\cite[th.\ 5.1]{Bartholdi}}]
The Johnson morphism associated to the universal action on $\bar\Gamma_*(F_n)$ is an isomorphism:
\[\tau: \Lie(\mathcal A_*(\bar\Gamma_*F_n)) \cong \Der(LV).\]
\end{prop}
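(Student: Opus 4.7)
The plan is to note that injectivity is free (by Lemma \ref{Johnson_inj} and Lemma \ref{tau_inj}, since $\mathcal A_*(\bar\Gamma_*F_n)$ is by construction the universal filtration acting on $\bar\Gamma_*F_n$), so the only work is surjectivity. First observe that $\Lie(\bar\Gamma_*F_n)=\Lie(F_n)=\mathfrak LV$, because each graded piece $\bar\Gamma_k/\bar\Gamma_{k+1}$ equals $\Gamma_k F_n/\Gamma_{k+1}F_n$ (the closed lower central series is the closure of the lower central series in the pronilpotent topology, and the quotients $F_n/\Gamma_{c+1}F_n$ are already nilpotent). So we need, given $\partial\in\Der_k(\mathfrak LV)$, to produce a continuous automorphism $\varphi$ of $\widehat F_n$ lying in $\mathcal A_k(\bar\Gamma_*F_n)$ whose Johnson image is $\partial$.

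For the construction I would choose, for each generator $x_i$, a representative $\xi_i\in\Gamma_{k+1}F_n$ of the class $\partial(X_i)\in \mathfrak L_{k+1}V=\Gamma_{k+1}/\Gamma_{k+2}$, and define an endomorphism of $F_n$ by $x_i\mapsto x_i\xi_i$. This endomorphism preserves each $\Gamma_j F_n$, so it extends uniquely to a continuous endomorphism $\varphi$ of $\widehat F_n$. Since $\xi_i\in\Gamma_2 F_n$, it induces the identity on $F_n^{ab}$, and Lemma \ref{iso_test_on_ab-c} then guarantees that $\varphi$ is a continuous \emph{automorphism}. This step is what fails on the non-completed side and explains why the statement succeeds here: the iso-test on the abelianization holds for $\widehat F_n$ but not for $F_n$ itself.

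It remains to check $\varphi\in \mathcal A_k(\bar\Gamma_*F_n)$ and $\tau(\varphi)=\partial$. By Lemma \ref{Explicit_A_*} (adapted to the closed filtration), the first amounts to $[\varphi,\widehat F_n]\subseteq\bar\Gamma_{k+1}$, which I would verify first on the topological generators ($[\varphi,x_i]=x_i\xi_ix_i^{-1}\in\Gamma_{k+1}$ by normality) and then propagate by the identity $[\varphi,gh]=[\varphi,g]\cdot{}^{g}[\varphi,h]$ and normality of $\bar\Gamma_{k+1}$, together with a continuity argument. The equality $\tau(\varphi)=\partial$ follows because both are derivations of $\mathfrak LV$ that agree on the degree-one generators: $\tau(\varphi)(X_i)=\overline{\varphi(x_i)x_i^{-1}}=\overline{x_i\xi_ix_i^{-1}}=\overline{\xi_i}=\partial(X_i)$, using that $[\Gamma_{k+1},\Gamma_1]\subseteq\Gamma_{k+2}$. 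The genuine obstacle is nothing deep but rather the careful bookkeeping around the completion: confirming that the endomorphism built from arbitrary lifts extends to a continuous automorphism, and that the Andreadakis condition survives passage from generators to all of $\widehat F_n$; both reduce, in the end, to Lemma \ref{iso_test_on_ab-c} and the normality of the closed lower central series.
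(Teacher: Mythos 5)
Your proof is correct and follows essentially the same route as the paper's: identify $\Lie(\bar\Gamma_*F_n)$ with $\mathfrak LV$, lift the values $\partial(X_i)$ to elements of the completion, define a continuous endomorphism on the topological generators, and invoke Lemma \ref{iso_test_on_ab-c} to upgrade it to an automorphism whose Johnson image is $\partial$. The extra bookkeeping you supply (verifying membership in $\mathcal A_k(\bar\Gamma_*F_n)$ via the closed analogue of Lemma \ref{Explicit_A_*} and checking $\tau(\varphi)=\partial$ on generators) only makes explicit what the paper leaves implicit, and your degree indexing $\partial(X_i)\in\Gamma_{k+1}/\Gamma_{k+2}$ is the consistent one.
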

\begin{proof}
Because of Lemma \ref{Johnson_inj}, we know that $\tau$ is injective. We need to show that it is surjective.
Let us first remark that $\Lie(\bar\Gamma_*(F_n)) = \Lie(\Gamma_*(F_n)) \cong LV$. Let $\partial \in \Der_k(LV)$. Lift each $\partial(X_i) \in \Gamma_k/\Gamma_{k+1} \cong \bar\Gamma_k/\bar\Gamma_{k+1}$ to an element $t_i \in \bar\Gamma_k$. We can define a continuous endomorphism of $\widehat F_n$ by $\varphi : x_i \mapsto t_i x_i$. Then $\varphi$ acts trivially on $F_n^{ab}$, so it is an isomorphism by Lemma \ref{iso_test_on_ab-c}. As a consequence, $\varphi \in \mathcal A_k(\bar\Gamma_*(F_n))$ satisfies $\tau(\bar \varphi) = \partial$. This concludes the proof.
\end{proof}

We can translate this last result into a statement about automorphisms of free nilpotent groups : because of Propositions \ref{proj_lim_of_Aut} and \ref{surj_between_Aut}, $\Aut(F_{n,c})$ is a quotient of $\Aut_{\mathcal C^0}(\widehat F_n)$. Moreover, the kernel of the canonical surjection is $\mathcal A_c(\bar\Gamma_*F_n)$, by definition. Thus, this projection induces the $c$-truncations at the level of the associated graded objects :
\begin{cor}\label{Johnson_iso}
The Johnson morphism associated to the universal action on $\Gamma_*(F_{n,c})$ is an isomorphism:
\[\tau: \Lie(\mathcal A_*(F_{n,c})) \cong \Der(L_{\leq c}V).\]
\end{cor}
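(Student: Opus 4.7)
The plan is to deduce this from the preceding proposition (the Johnson isomorphism $\tau: \Lie(\mathcal A_*(\bar\Gamma_*F_n)) \cong \Der(LV)$) by showing that the truncation map $\widehat F_n \twoheadrightarrow F_{n,c}$ induces isomorphisms in the relevant range of degrees on both sides of $\tau$.

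First, I would analyze the target. Since $L_{\leq c}V$ is the $c$-truncation of $LV$ as a graded Lie ring generated in degree one, a graded derivation of degree $k$ is determined by its restriction $V \to L_{k+1,\leq c}V$. This gives $\Der_k(L_{\leq c}V) \cong V^* \otimes L_{k+1}V \cong \Der_k(LV)$ for $1 \leq k \leq c-1$, and $\Der_k(L_{\leq c}V) = 0$ for $k \geq c$. So the canonical projection $\Der(LV) \twoheadrightarrow \Der(L_{\leq c}V)$ is an isomorphism in degrees at most $c-1$.

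Second, I would analyze the source. The reduction map $\pi: \Aut_{\mathcal C^0}(\widehat F_n) \twoheadrightarrow \Aut(F_{n,c})$ has kernel $\mathcal A_c(\bar\Gamma_*F_n)$ by definition. I claim that $\pi$ restricts for every $j \geq 1$ to a \emph{surjection} $\mathcal A_j(\bar\Gamma_*F_n) \twoheadrightarrow \mathcal A_j(F_{n,c})$. The inclusion $\pi(\mathcal A_j(\bar\Gamma_*F_n)) \subseteq \mathcal A_j(F_{n,c})$ is immediate. Conversely, given $\varphi \in \mathcal A_j(F_{n,c})$ write $\varphi(\bar x_i) = \bar x_i \bar s_i$ with $\bar s_i \in \Gamma_{j+1}(F_{n,c})$, lift each $\bar s_i$ to an element $s_i \in \bar\Gamma_{j+1}F_n$, and define $\tilde\varphi: x_i \mapsto x_i s_i$. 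This is a continuous endomorphism of $\widehat F_n$ acting trivially on $F_n^{ab}$, hence an automorphism by Lemma~\ref{iso_test_on_ab-c}; the condition $[\tilde\varphi, x_i] = s_i \in \bar\Gamma_{j+1}$ together with the analogue of Lemma~\ref{Explicit_A_*} (which applies because $\Lie(\bar\Gamma_*F_n) = LV$ is generated in degree one) shows that $\tilde\varphi \in \mathcal A_j(\bar\Gamma_*F_n)$, and $\pi(\tilde\varphi) = \varphi$ by construction.

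Since $\mathcal A_c(\bar\Gamma_*F_n) \subseteq \mathcal A_{j+1}(\bar\Gamma_*F_n)$ whenever $j+1 \leq c$, a diagram chase in the kernel-preserving short exact sequences
\[1 \to \mathcal A_c(\bar\Gamma_*F_n) \to \mathcal A_j(\bar\Gamma_*F_n) \xrightarrow{\pi} \mathcal A_j(F_{n,c}) \to 1\]
for $j$ and $j+1$ yields an induced isomorphism $\Lie_j(\mathcal A_*(\bar\Gamma_*F_n)) \cong \Lie_j(\mathcal A_*(F_{n,c}))$ in the range $1 \leq j \leq c-1$. For $j \geq c$, the group $\mathcal A_j(F_{n,c})$ is trivial (as $\Gamma_{c+1}(F_{n,c}) = 1$), so $\Lie_j(\mathcal A_*(F_{n,c})) = 0$. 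Combining both observations in the naturality square
\[\begin{tikzcd}
\Lie(\mathcal A_*(\bar\Gamma_*F_n)) \ar[r, "\tau", "\cong"'] \ar[d, two heads] & \Der(LV) \ar[d, two heads] \\
\Lie(\mathcal A_*(F_{n,c})) \ar[r, "\tau"] & \Der(L_{\leq c}V),
\end{tikzcd}\]
in which the vertical arrows are isomorphisms in degrees $\leq c-1$ and both sides vanish in degrees $\geq c$, gives the desired conclusion. The only nontrivial ingredient is really the lifting step, which was essentially already done in Proposition~\ref{surj_between_Aut}; no further obstruction arises.
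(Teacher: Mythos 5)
Your proof is correct and follows essentially the same route as the paper: both deduce the corollary from the Johnson isomorphism for $\bar\Gamma_*(F_n)$ via the projection $\Aut_{\mathcal C^0}(\widehat F_n)\twoheadrightarrow\Aut(F_{n,c})$ with kernel $\mathcal A_c(\bar\Gamma_*F_n)$, which induces the $c$-truncation on both the source and the target. You merely make explicit the degreewise surjectivity and the identification $\Der_k(L_{\leq c}V)\cong V^*\otimes L_{k+1,\leq c}V$, which the paper leaves implicit.
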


Consider the Johnson morphism $\tau': \Lie(\Gamma_*(IA_{F_{n,c}})) \rightarrow \Der(L_{\leq c}V)$. Using the identification of corollary \ref{Johnson_iso}, we see that $\tau'$ identifies with the Andreadakis morphism $i_*: \Lie(\Gamma_*(IA_{F_{n,c}})) \rightarrow \Lie(\mathcal A_*(F_{n,c}))$. 
Its image is the subalgebra generated in degree one inside $\Der(L_{\leq c}V)$, as was the case in paragraph \ref{par-linear_alg}. This subalgebra is exactly the truncation $\mathfrak I_{<c}$, so is inside (and stably equal to) the kernel of the trace map. As a consequence, \textbf{the Andreadakis equality never holds for free nilpotent groups}. Moreover, in this context, our stable surjectivity result translates as :
\begin{cor}
The following sequence always is a complex, and is exact for  $n \geq c + 1$:
\[\Lie(\Gamma_*(IA_{F_{n,c}})) \overset{i_*}{\longrightarrow} \Lie(\mathcal A_*(F_{n,c})) \cong \Der(L_{\leq c}V) \overset{\tr}{\longrightarrow} C_{<c}V \longrightarrow 0.\]
\end{cor}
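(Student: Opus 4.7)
The plan is to translate the statement into the free-group setting via the Johnson identification of Corollary \ref{Johnson_iso} and then invoke the linear-algebra analysis already carried out in Paragraph \ref{coker}. Under the isomorphism $\Lie(\mathcal A_*(F_{n,c})) \cong \Der(L_{\leq c}V)$, the map $i_*$ becomes the Johnson morphism $\tau'$ for $F_{n,c}$. Since $\Lie(\Gamma_*(IA_{F_{n,c}}))$ is generated in degree one (Proposition \ref{engdeg1}) and $\tau'_1$ is an isomorphism onto $\Der_1(L_{\leq c}V) = V^* \otimes \Lambda^2 V$ (same computation as in Proposition \ref{IAn^ab}), the image of $i_*$ is the sub-Lie ring of $\Der(L_{\leq c}V)$ generated in degree one.

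The second step is to identify this image with a truncation of the free-case subalgebra $\mathfrak I$ of Paragraph \ref{par-linear_alg}. Observe that $\Der_k(L_{\leq c}V) \cong V^* \otimes L_{k+1}V$ for $k \leq c-1$ and vanishes for $k \geq c$ (a positive-degree derivation of $L_{\leq c}V$ is determined by its restriction to $V$, which must land in $L_{k+1}V$, and this target is zero whenever $k+1 > c$). The canonical truncation map $\Der(\mathfrak L V) \to \Der(L_{\leq c}V)$ is therefore an isomorphism in degrees $\leq c-1$; since brackets of total degree at most $c-1$ are preserved under truncation, the sub-Lie ring of $\Der(L_{\leq c}V)$ generated in degree one coincides, degree by degree for $k \leq c-1$, with $\mathfrak I_k$.

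The complex property then reduces to Proposition \ref{Tr_o_tau} (equivalently Proposition \ref{Tr(I)}): in the free case, $\tr_M$ vanishes on $\mathfrak I_k$ for $k \geq 2$. Through the identification above, this yields $\tr \circ i_* = 0$ in each relevant degree. For exactness under the hypothesis $n \geq c+1$, I apply Proposition \ref{prop-coker}: for every $k$ with $2 \leq k \leq c-1$, the hypothesis $n \geq k+2$ is satisfied (since $n \geq c+1 \geq k+2$), so $\overline{\Phi}\colon \coker(\tau'_k) \xrightarrow{\sim} C_kV$ is an isomorphism. Equivalently, $\mathfrak I_k = \ker(\tr_M)_k$ and $\tr_M$ is surjective onto $C_kV$. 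Translating back to the nilpotent setting yields exactness of the sequence at $\Der(L_{\leq c}V)$ together with surjectivity of $\tr$ onto $C_{<c}V$.

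The argument is essentially a dictionary translation of the free-group results of Paragraph \ref{coker} to the nilpotent setting, so there is no substantive obstacle once the Johnson identification is in hand. The delicate point worth explicit verification is the matching, degree by degree for $k \leq c-1$, between the sub-Lie ring generated in degree one inside $\Der(L_{\leq c}V)$ and the truncation of $\mathfrak I$; this is where the generation-in-degree-one property of $IA$-groups interacts with the truncation. The numerical bound $n \geq c+1$ emerges as the exact condition required to apply Proposition \ref{prop-coker} at the top degree $k = c-1$, which shows this bound is sharp for the present method of proof.
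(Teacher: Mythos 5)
Your proposal is correct and follows essentially the same route as the paper: identify $i_*$ with $\tau'$ via Corollary \ref{Johnson_iso}, observe that its image is the sub-Lie ring generated in degree one, identify that with the truncation $\mathfrak I_{<c}$, and then invoke the vanishing of the trace together with the stable computations of Paragraph \ref{coker} (Proposition \ref{prop-coker}), with the bound $n \geq c+1$ coming from the top degree $k = c-1$. Your explicit check that the truncation $\Der(\mathfrak LV) \to \Der(L_{\leq c}V)$ is an isomorphism in degrees $\leq c-1$, so that the degree-one-generated subalgebra really is $\mathfrak I_{<c}$, spells out a step the paper only asserts.
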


\begin{rmq}[Non-tame automorphisms]
The canonical morphism $p: \Aut(F_n) \rightarrow \Aut(F_{n,c})$ is in general not surjective: some basis of the free nilpotent group do not lift to basis of the free group \emph{via} $F_n \twoheadrightarrow F_{n,c}$. Automorphisms of $F_{n,c}$ induced by automorphisms of $F_n$ are called \emph{tame}. This was the original motivation of \cite{Bryant} for considering the trace map. In this regard, our stable surjectivity result could be re-stated as follows : in the stable range, the trace is the only obstruction for an automorphism to be tame.
\end{rmq}

\section{The case of positive caracteristic}\label{Section-p}

\subsection{Dark's theorem}

Let $w$ be a word in a free group $F_S$. Let $w(\textbf{r})$ be the word obtained from $w$ by replacing each generator $s \in S$ by some power $s^{r_s}$. Dark's theorem \cite{Dark} describes how to decompose $w(\textbf{r})$ as a product of commutators. This gives very useful universal formulas, that can then be evaluated in any group. 
The reader is referred to \cite[chap. IV, Th.\ 1.11]{Passi} for a precise statement and a proof of the theorem. Here we recall two corollaries, obtained by taking $w = [x,y]$ and $w = xy$ in $F_{\{x,y\}}$.

\medskip

The case $w = [x,y]$ is \cite[IV, cor. 1.16]{Passi}:
\begin{cor}\label{Dark_for_commutators}
There exists a unique map $\theta: (\mathbb N^*)^2 \longrightarrow F_2 = \langle x,y \rangle$ verifying:
\[\forall \alpha, \beta \in \mathbb N,\ [x^\alpha,y^\beta] = \prod\limits_{r,s \geq 1} \theta(r,s)^{\binom{\alpha}{r}\binom{\beta}{s}}.\]
Each $\theta(r,s)$ is a product of $\{x^{\pm 1},y^{\pm 1}\}$-commutators such that $x^{\pm 1}$ appears at least $r$ times and $y^{\pm 1}$ at least $s$ times in each factor.
\end{cor}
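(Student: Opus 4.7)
The plan is to deduce this corollary directly from Dark's theorem \cite[chap. IV, Th.\ 1.11]{Passi} by specialising it to the case $w = [x,y] \in F_{\{x,y\}}$. Dark's theorem, applied to a word $w \in F_S$, produces a \emph{universal} decomposition of $w(s^{r_s})$ as a product of iterated commutators raised to integer powers that are products of binomial coefficients $\prod_s \binom{r_s}{k_s}$, where each commutator factor involves each generator $s^{\pm 1}$ at least $k_s$ times. The first step is simply to instantiate this with $w = [x,y]$, $S = \{x,y\}$ and substitution $x \mapsto x^\alpha$, $y \mapsto y^\beta$, producing some universal map $\theta : \mathbb{N}^2 \to F_2$ (a priori indexed by $r, s \geq 0$) such that
\[
[x^\alpha, y^\beta] = \prod_{r,s \geq 0} \theta(r,s)^{\binom{\alpha}{r}\binom{\beta}{s}},
\]
and, crucially, each $\theta(r,s)$ is a product of $\{x^{\pm 1}, y^{\pm 1}\}$-commutators in which $x^{\pm 1}$ appears at least $r$ times and $y^{\pm 1}$ at least $s$ times in every factor; this last property is inherited verbatim from Dark's theorem and needs no further work.

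The second step is to eliminate the indices $r = 0$ and $s = 0$ so that the product may be restricted to $r, s \geq 1$. Substituting $\alpha = 0$ gives $[1, y^\beta] = 1$, while on the right-hand side only the terms with $r = 0$ survive (since $\binom{0}{r} = 0$ for $r \geq 1$ and $\binom{0}{0} = 1$), yielding $\prod_{s \geq 0} \theta(0,s)^{\binom{\beta}{s}} = 1$ for every $\beta \in \mathbb{N}$. Taking $\beta = 0, 1, 2, \dots$ successively and using the uniqueness part of Dark's theorem (or, equivalently, the fact that the $\binom{\beta}{s}$ are linearly independent as functions of $\beta$) forces $\theta(0,s) = 1$ for all $s$. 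A symmetric argument with $\beta = 0$ gives $\theta(r,0) = 1$ for all $r$, so the product may indeed be taken over $r, s \geq 1$ only.

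The final step is uniqueness of $\theta$ on $(\mathbb{N}^*)^2$, which again follows from the uniqueness clause in Dark's theorem: any other map $\theta' : (\mathbb{N}^*)^2 \to F_2$ satisfying the identity would, after extending by $\theta'(r,0) = \theta'(0,s) = 1$, contradict the uniqueness of the universal Dark decomposition of the word $[x,y]$. There is no real obstacle here — the entire argument is essentially a bookkeeping exercise on top of the statement of Dark's theorem; the only mildly delicate point is the vanishing of $\theta(0,s)$ and $\theta(r,0)$, which is what lets us index the product over $\mathbb{N}^* \times \mathbb{N}^*$ rather than $\mathbb{N}^2$.
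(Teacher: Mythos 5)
Your derivation is correct, and it is essentially the route the paper takes: the text simply quotes this statement as \cite[IV, cor.\ 1.16]{Passi}, i.e.\ as the specialisation of Dark's theorem to $w=[x,y]$, with uniqueness handled by the same evaluate-at-small-$(\alpha,\beta)$ induction you describe. Your extra bookkeeping showing $\theta(0,s)=\theta(r,0)=1$ (so the product can be indexed over $(\mathbb N^*)^2$) is the right way to fill in the only detail the citation leaves implicit.
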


The case $w = xy$ has been known for a long time (quoted in \cite[chap. 11, Th.\ 1.14]{Passman}, it already appears for instance in \cite{Hall}):

\begin{cor}\label{Dark_for_products}
There exists a unique map $\theta: \mathbb N \longrightarrow F_2 = \langle x,y \rangle$ such that:
\[\forall \alpha \in \mathbb N,\ x^\alpha y^\alpha = \prod\limits_{r \geq 0} \theta(r)^{\binom{\alpha}{r}},\]
Each $\theta(r)$ is a product of $\{x^{\pm 1},y^{\pm 1}\}$-commutators of length at least $r$.
\end{cor}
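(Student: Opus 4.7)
The plan is to deduce the corollary from the full form of Dark's theorem applied to the single word $w = xy$ in $F_{\{x,y\}}$ with the specialization $r_x = r_y = \alpha$. Dark's theorem then provides a universal decomposition
\[x^\alpha y^\alpha = \prod_{(i,j) \in \mathbb{N}^2} \tilde\theta(i,j)^{\binom{\alpha}{i}\binom{\alpha}{j}},\]
where each $\tilde\theta(i,j)$ is a product of commutators in $\{x^{\pm 1}, y^{\pm 1}\}$ in which $x^{\pm 1}$ appears at least $i$ times and $y^{\pm 1}$ appears at least $j$ times per factor; in particular every commutator involved has total length at least $i+j$. The task is to collapse this double-indexed product into one indexed by a single integer~$r$.

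The combinatorial input is that $\{\binom{\alpha}{r}\}_{r \geq 0}$ is a $\mathbb{Z}$-basis of the ring of integer-valued polynomials in $\alpha$, so there exist unique integers $c_{i,j,k}$, supported on $\max(i,j) \leq k \leq i+j$, with $\binom{\alpha}{i}\binom{\alpha}{j} = \sum_k c_{i,j,k} \binom{\alpha}{k}$. Working in $F_2 / \Gamma_{n+1} F_2$ for increasing $n$, only finitely many $\tilde\theta(i,j)$ contribute nontrivially (those with $i+j \leq n$), so the Dark product becomes a finite product that can be rearranged: one collects together the factors that should bear the exponent $\binom{\alpha}{k}$ for each $k$, absorbing the correction commutators produced by reordering. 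Defining $\theta(k)$ as the resulting aggregate, the corrections lie in strictly higher $\Gamma$-terms and are pushed into $\theta(k')$ for $k' > k$. Since every contributor had commutator length at least $i+j \geq k$, the length bound on $\theta(k)$ survives the regrouping.

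The main obstacle will be to track the ordering corrections coherently as $n$ grows so that the finite approximations assemble into a well-defined infinite product; this uses the residual nilpotency of $F_2$ to pass to the limit. For uniqueness, any two maps $\theta, \theta'$ satisfying the formula must agree: projecting into $F_2/\Gamma_{n+1}$ reduces the equality $\prod_k \theta(k)^{\binom{\alpha}{k}} = \prod_k \theta'(k)^{\binom{\alpha}{k}}$ to a finite identity, and the $\mathbb{Z}$-linear independence of $\{\binom{\alpha}{k}\}_{k \leq n}$ as functions $\mathbb{N} \to \mathbb{Z}$ forces $\theta(k) \equiv \theta'(k) \pmod{\Gamma_{k+1}}$ by induction on $k$; residual nilpotency then yields $\theta = \theta'$.
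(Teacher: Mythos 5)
The paper itself gives no proof of this corollary: it is the classical Hall--Petrescu identity, quoted from Passman and Hall, and the paper's own remark points out that $\theta$ is forced, hence constructible, by induction --- evaluating the displayed identity at $\alpha = k$ gives $\theta(k) = \left(\prod_{r<k}\theta(r)^{\binom{k}{r}}\right)^{-1}x^k y^k$, which settles at once the existence of a map satisfying the formula and its uniqueness (your uniqueness paragraph is therefore correct but far more elaborate than needed). The entire mathematical content of the statement is the membership $\theta(r)\in\Gamma_r$, and that is exactly where your argument has a gap.

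The gap is in the collapsing step. Rewriting $\tilde\theta(i,j)^{\binom{\alpha}{i}\binom{\alpha}{j}}$ as $\prod_k\bigl(\tilde\theta(i,j)^{c_{i,j,k}}\bigr)^{\binom{\alpha}{k}}$ is fine, and your support claim $\max(i,j)\leq k\leq i+j$ is correct; but re-sorting the resulting factors by $k$ produces corrections that are commutators of elements raised to the powers $\binom{\alpha}{k}$ and $\binom{\alpha}{k'}$. These are group elements \emph{depending on $\alpha$}, so they cannot simply be ``pushed into $\theta(k')$'': each must first be re-expanded into the form $\prod_m(\mathrm{const})^{\binom{\alpha}{m}}$, for instance by applying Corollary \ref{Dark_for_commutators} to $[u^a,v^b]$ with $a=c\binom{\alpha}{k}$, $b=c'\binom{\alpha}{k'}$, after which one must check that the new constants lie deep enough in the lower central series relative to the indices $m$ that occur (this does work out: a term $\Theta(r,s)\in\Gamma_{r(i+j)+s(i'+j')}$ acquires exponents supported on $m\leq rk+sk'\leq r(i+j)+s(i'+j')$), and then iterate, verifying termination modulo $\Gamma_{n+1}$. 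This re-expansion loop is the actual substance of the collection process behind the Hall--Petrescu formula --- the standard proof instead inducts on the nilpotency class, where centrality of $\Gamma_k/\Gamma_{k+1}$ makes the corrections vanish --- and your single sentence asserting that the corrections ``lie in strictly higher $\Gamma$-terms'' skips it. The rest of the outline (the combinatorial identity and the length bookkeeping $i+j\geq k$) is sound, so the plan is completable, but as written the decisive step is not justified.
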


\begin{rmq}
In both cases, uniqueness of the map $\theta$ is obvious: it can be defined by induction.
\end{rmq}

\subsection{\texorpdfstring{$p$}{p}-Restricted strongly central series}

\begin{defi}
Let $p$ be a prime number. A strongly central series $G_*$ is said to be \emph{$p$-restricted} if:
\[\forall i,\ G_i^p \subseteq G_{ip}.\]
\end{defi}

Let $G_*$ be a $p$-restricted strongly central series. Using \cite[Th.\ III.1.7]{Passi}, we see that the morphism:
\[\Lie(G_*) \longrightarrow \gr(\mathfrak a^{\F_p}_*(G_*))\]
induced by $g \longmapsto g-1$ (see Theorem \ref{Lazard} and Proposition \ref{adjonction_a}) is injective. We can identify $\Lie(G_*)$ with its image, which is stable by the $p$-th power operation in the associative $\mathcal F_p$-algebra $\gr(\mathfrak a^{\F_p}_*(G_*))$, since $(g-1)^p = g^p-1$. From this we deduce that $\Lie(G_*)$ is a \emph{$p$-restricted Lie algebra} with $p$-th power operation induced by $g \longmapsto g^p$ in $G_1$. The reader is referred to the classical \cite{Jacobson} for a discussion of $p$-restricted Lie algebras.
We also can deduce from \cite[Th.\ III.1.7]{Passi} that any $p$-restricted strongly central series has to contain the dimension series $D^{\F_p}_*G$ defined in Example \ref{dimension} (because $\mathfrak a^{\F_p}_*(G_*)$ contains $I^*_{\F_p}G$):
\begin{prop}
The filtration $D^{\F_p}_*G$ is the minimal $p$-restricted strongly central series, on any group $G$.
\end{prop}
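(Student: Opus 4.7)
The plan is to establish two separate claims: (a) that $D^{\F_p}_*G$ is itself a $p$-restricted strongly central series, and (b) that any other such series contains it. Strong centrality of $D^{\F_p}_*G$ is already part of Example \ref{dimension} (it is obtained from Lazard's Theorem \ref{Lazard} applied to the group algebra $\F_p G$ filtered by the powers of its augmentation ideal). Moreover, the parenthetical remark immediately preceding the statement, together with the $p$-restricted form of Passi's theorem quoted above, already yields minimality. So the only genuinely new content is $p$-restrictedness of $D^{\F_p}_*G$, and I would single this out as the main step.

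For $p$-restrictedness, the key trick is that in the commutative subring $\F_p[g] \subseteq \F_p G$ the binomial coefficients $\binom{p}{k}$ vanish for $0 < k < p$. Explicitly, take $g \in D_i^{\F_p}G$, so that $g - 1 \in I^i_{\F_p}G$. Since $g$ and $1$ commute in $\F_p G$, expanding $(g-1)^p$ by the binomial theorem in characteristic $p$ gives
\[(g-1)^p = g^p - 1 \in \left(I^i_{\F_p}G\right)^p \subseteq I^{ip}_{\F_p}G,\]
hence $g^p \in G \cap (1 + I^{ip}_{\F_p}G) = D^{\F_p}_{ip}G$, as required. This is the only real calculation.

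For minimality, let $G_*$ be any $p$-restricted strongly central series. Because the filtration $\mathfrak a^{\F_p}_*(G_*)$ is an algebra filtration with $G_1 - 1 \subseteq \mathfrak a^{\F_p}_1(G_*)$, its minimality among such filtrations forces $I^k_{\F_p}G \subseteq \mathfrak a^{\F_p}_k(G_*)$ for every $k$. I would then argue by contradiction: given $g \in D_i^{\F_p}G$, suppose $g \notin G_i$ and let $j < i$ be maximal such that $g \in G_j$; the class $\bar g \in G_j/G_{j+1}$ is nonzero, so by the injectivity of $\Lie(G_*) \hookrightarrow \gr(\mathfrak a^{\F_p}_*(G_*))$ recalled in the paragraph preceding the statement, the class of $g-1$ in $\mathfrak a^{\F_p}_j(G_*)/\mathfrak a^{\F_p}_{j+1}(G_*)$ is also nonzero; but $g - 1 \in I^i_{\F_p}G \subseteq \mathfrak a^{\F_p}_i(G_*) \subseteq \mathfrak a^{\F_p}_{j+1}(G_*)$, a contradiction.

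The main (and essentially only) obstacle is the characteristic-$p$ identity $(g-1)^p = g^p - 1$; once this is in hand, both $p$-restrictedness and minimality are formal consequences of the material set up in Sections \ref{Generalities} and of Passi's theorem. No appeal to Dark's formulas (Corollaries \ref{Dark_for_commutators}–\ref{Dark_for_products}) is needed at this point, although they will presumably resurface later when handling $N_p$-series that are not strictly $p$-restricted in the augmentation-ideal sense.
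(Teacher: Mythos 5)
Your proof is correct and follows the same route the paper takes: minimality comes from Passi's Theorem III.1.7 (injectivity of $\Lie(G_*) \hookrightarrow \gr(\mathfrak a^{\F_p}_*(G_*))$) combined with the containment $I^*_{\F_p}G \subseteq \mathfrak a^{\F_p}_*(G_*)$, exactly as in the paragraph preceding the statement, and the characteristic-$p$ identity $(g-1)^p = g^p-1$ gives the $p$-restrictedness of $D^{\F_p}_*G$. Your explicit verification of that last point is a worthwhile addition, since the paper leaves it implicit even though the proposition needs it.
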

The filtration $D^{\F_p}$, also denoted by $\Gamma_*^{[p]}$ also admits a description by induction, or the more explicit description:
\[\Gamma_k^{[p]}G = \prod_{ip^j \geq n}(\Gamma_iG)^{p^j}.\]
These can be found in \cite[Th.\ 5.6]{Lazard} or \cite[Th.\ IV.1.9]{Passi}. We also refer to \cite{Chapman-Efrat} for a nice discussion of filtrations defined by induction. This description has a nice consequence, similar to Proposition \ref{engdeg1} (we abbreviate $\Lie(\Gamma_*^{[p]}G)$ to $\Lie^{[p]}(G)$):
\begin{prop}\label{engdeg1-p} 
The $p$-restricted Lie ring $\Lie^{[p]}(G)$ is \emph{generated in degree $1$}. Precisely, it is generated (as a $p$-restricted Lie ring) by $\Lie_1^{[p]}(G) = G^{ab} \otimes \F_p$.
\end{prop}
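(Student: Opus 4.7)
The plan is to exploit the explicit description $\Gamma_k^{[p]}G = \prod_{ip^j \geq k}(\Gamma_iG)^{p^j}$ quoted just above the statement, and to combine it with the already-established fact (Proposition \ref{engdeg1}) that the ordinary Lie ring $\Lie(G)$ is generated in degree $1$. The underlying idea is that passing from $\Lie(G)$ to $\Lie^{[p]}(G)$ costs nothing in terms of generation, provided one is allowed to use the $p$-th power operation.

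More precisely, I would first observe that the product formula above immediately gives, modulo $\Gamma_{k+1}^{[p]}G$, a spanning set for $\Lie_k^{[p]}(G)$ consisting of classes of the form $\overline{g^{p^j}}$ with $g \in \Gamma_iG$ and $ip^j = k$ (the summands with $ip^j > k$ being absorbed into $\Gamma_{k+1}^{[p]}G$, and $p$-th powers combining with commutators into sums in the associated graded abelian group). Next, since the $p$-th power operation on $\Lie^{[p]}(G)$ is induced by $g \mapsto g^p$ in $G$, such a class is exactly $(\bar g)^{[p^j]}$, the $j$-fold iterate of the $p$-th power operation applied to the image $\bar g \in \Lie_i^{[p]}(G)$ of $g \in \Gamma_iG$.

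It then remains to show that every element of the form $\bar g$ with $g \in \Gamma_iG$ lies in the $p$-restricted Lie subalgebra generated by $\Lie_1^{[p]}(G)$. For this, consider the natural graded Lie ring morphism $\Lie(G) \to \Lie^{[p]}(G)$ induced by the inclusion $\Gamma_*G \subseteq \Gamma_*^{[p]}G$ of $N$-series. By Proposition \ref{engdeg1}, $\Lie(G)$ is generated as an ordinary Lie ring by $\Lie_1(G) = G^{ab}$, whose image in $\Lie_1^{[p]}(G) = G^{ab}\otimes\F_p$ is surjective. Hence the image of $\Lie(G) \to \Lie^{[p]}(G)$ is contained in the ordinary Lie subring generated by $\Lie_1^{[p]}(G)$, and in particular every $\bar g$ with $g \in \Gamma_iG$ already lies in the $p$-restricted Lie subalgebra generated by $\Lie_1^{[p]}(G)$.

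Combining the two steps, every element of $\Lie_k^{[p]}(G)$ is an $\F_p$-linear combination of iterated $p$-th power operations applied to iterated Lie brackets of elements of $\Lie_1^{[p]}(G)$, which is exactly the desired generation statement. The only point that requires a little vigilance, and which I expect to be the main (mild) obstacle, is the passage from the \emph{product} decomposition of $\Gamma_k^{[p]}G$ to a \emph{sum} decomposition in the associated graded: one must check that modulo $\Gamma_{k+1}^{[p]}G$ a word in powers of commutators becomes a sum of homogeneous classes of the form $(\bar g)^{[p^j]}$, which follows from the strongly central property together with $\Gamma_i^{[p]}/\Gamma_{i+1}^{[p]}$ being abelian.
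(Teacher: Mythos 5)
Your proof is correct, and it is precisely the argument the paper has in mind: the proposition is stated there without proof as a ``nice consequence'' of the explicit description $\Gamma_k^{[p]}G = \prod_{ip^j \geq k}(\Gamma_iG)^{p^j}$, which is exactly your starting point. Your two supplementary observations --- that the classes $\overline{g^{p^j}}$ are iterated $p$-operations of classes coming from $\Lie(G)$, and that these latter classes lie in the degree-one-generated part by Proposition \ref{engdeg1} applied through the morphism $\Lie(G)\to\Lie^{[p]}(G)$ --- correctly fill in the details the paper leaves implicit.
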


\begin{ex}\label{Lp(Fn)_free}
If $G$ is a free group, then $\Lie^{[p]}(G) = \Lie(D^{\F_p}_*G)$ is the $p$-restricted Lie algebra generated by the degree-one part inside $\gr(\F_p G) \cong T_{\F_p}(G^{ab})$ so, using PBW over $\F_p$ \cite [Th.\ 1]{Jacobson} it is the free $p$-restricted Lie algebra over the $\F_p$-module $G^{ab} \otimes \F_p$ \cite[Th.\ 6.5]{Lazard}.
\end{ex}

\begin{rmq}
The filtration $\Gamma_*^{[p]}G$ (already defined in \cite{Zassenhaus}) is to be distinguished from \emph{Stallings' filtration} $\Gamma_*^{(p)}G$, defined in \cite{Stallings}. The latter is the minimal $p$-torsion strongly central filtration on a group $G$, where a strongly central filtration is \emph{$p$-torsion} when $G_i^p \subseteq G_{i+1}$ (for all $i$). See remark \ref{rk_on_q-torsion} for more on $q$-torsion strongly central series.
\end{rmq}

\subsection{The \texorpdfstring{$p$}{p}-restricted Andreadakis problem}\label{Andreadakis_p-restreint}

Let us denote $\mathcal A_*(\Gamma_*^{[p]}G)$ by $\mathcal A_*^{[p]}(G)$. Remark that $\mathcal A_1^{[p]}(G)$ is the group $IA^{[p]}_G$ of automorphisms acting trivially on $\Lie_1(\Gamma_*^{[p]}G) = G^{ab}\otimes \F_p$, hence on all of $\Lie(\Gamma_*^{[p]}G)$ (because of Proposition \ref{engdeg1-p}). If we show that $\mathcal A_*^{[p]}(G)$ is $p$-restricted (and we will -- see Proposition \ref{A p-restreinte}) then we get an inclusion:
\[\Gamma_*^{[p]}\left(IA^{[p]}_G\right) \subseteq \mathcal A_*^{[p]}(G).\]
We are thus led to consider a $p$-restricted version of the Andreadakis problem:
\begin{reppb}{pb_Andreadakis-p}[Andreadakis -- $p$-restricted version]
What is the difference between the $p$-restricted strongly central series $\mathcal A_*^{[p]}(G)$ and $\Gamma_*^{[p]}(IA^{[p]}_G)$ ? 
\end{reppb}

\begin{rmq}\label{IA^p_and_GL_n}
The group $IA^{[p]}_G$ contains $IA_G$ as a normal subgroup. Moreover, the quotient $IA^{[p]}_G/IA_G$ is a subgroup of $\Aut(G)/IA_G$, hence of $GL(G^{ab})$. In fact, by definition of $IA^{[p]}_G$, it is contained in the congruence group:
\[GL(pG^{ab}) = \ker(GL(G^{ab}) \rightarrow GL(G^{ab}\otimes \F_p)).\]
When $G = F_n$ is a free group of finite type, then $\Aut(G)/IA_G \cong GL_n(\Z)$, and $IA^{[p]}_G/IA_G$ is exactly $GL_n(p\Z)$.
\end{rmq}

\begin{prop}[\cite{Massuyeau1}, Prop.\ 8.5]\label{A p-restreinte}
Let $G_*$ be a $p$-restricted strongly central filtration, and $K$ be a group acting on $G_*$. Then $\mathcal A_*(K,G_*)$ is a $p$-restricted strongly central filtration.
\end{prop}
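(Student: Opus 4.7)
The plan is to unpack the definition: by Remark~\ref{A_on_K}, proving that $\mathcal A_*(K, G_*)$ is $p$-restricted reduces to showing that for any $j \geq 1$ and any $\sigma \in \mathcal A_j(K, G_*)$, one has $[\sigma^p, g] \in G_{i + pj}$ for every $i \geq 1$ and every $g \in G_i$. The key tool will be Dark's theorem for commutators (Corollary~\ref{Dark_for_commutators}), which provides a universal identity in $F_2 = \langle x, y \rangle$. Specializing it at $x = \sigma$, $y = g$, $\alpha = p$, $\beta = 1$ (and using $\binom{1}{s} = 0$ for $s \geq 2$), we will obtain
\[
[\sigma^p, g] \;=\; \prod_{r = 1}^{p} \theta(r, 1)^{\binom{p}{r}},
\]
where each $\theta(r, 1)$ is a product of $\{\sigma^{\pm 1}, g^{\pm 1}\}$-commutators in which $\sigma^{\pm 1}$ appears at least $r$ times and $g^{\pm 1}$ at least once.

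The next step is to locate each $\theta(r, 1)$ inside $G_*$. Consider the holomorph $H := G_1 \rtimes K$ endowed with the filtration $H_k := G_k \rtimes \mathcal A_k(K, G_*)$, which is a strongly central series on $H_1$ by Lemma~\ref{sdprod}; the required hypothesis $[\mathcal A_k(K, G_*), G_i] \subseteq G_{i+k}$ is built into the definition of $\mathcal A_*(K, G_*)$. Since $\sigma \in H_j$ and $g \in H_i$, any commutator occurring in $\theta(r, 1)$ involves at least $r$ entries from $H_j$ and at least one entry from $H_i$, so strong centrality places it in $H_{rj + i}$; being a commutator involving $g \in G$, with $G$ normal in $H$, it actually lies in $G \cap H_{rj + i} = G_{rj + i}$. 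Hence $\theta(r, 1) \in G_{rj + i}$ for every $r$.

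The final step exploits the $p$-restrictedness of $G_*$ to absorb the binomial exponents. For $1 \leq r < p$, the coefficient $\binom{p}{r}$ is divisible by $p$; writing $\binom{p}{r} = p \, m_r$, we get $\theta(r, 1)^{\binom{p}{r}} = (\theta(r, 1)^{m_r})^p \in G_{rj + i}^{\,p} \subseteq G_{p(rj + i)}$, and $p(rj + i) \geq pj + i$ since $(r-1)pj + (p-1)i \geq 0$. For $r = p$, $\theta(p, 1) \in G_{pj + i}$ already. Every factor of the product therefore lies in the subgroup $G_{i + pj}$, and so does the product itself, yielding $[\sigma^p, g] \in G_{i + pj}$ as required. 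No serious obstacle is expected; the crucial insight is to invoke Dark's identity rather than attempting a direct iteration of $[\sigma^n, g]$, because then the only non-trivial input needed is the divisibility $p \mid \binom{p}{r}$ for $1 \leq r < p$, which matches exactly the exponent $p$ in the definition of $p$-restrictedness.
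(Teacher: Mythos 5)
Your proof is correct and takes essentially the same route as the paper's: specialize Dark's identity (Corollary~\ref{Dark_for_commutators}) to $[\sigma^p,g]$, place each $\theta(r,1)$ in $G_{i+rj}$ via strong centrality of the semi-direct product filtration, and absorb the exponents $\binom{p}{r}$ for $r<p$ using $p\mid\binom{p}{r}$ together with the $p$-restrictedness of $G_*$. The only difference is that you spell out the justification that $\theta(r,1)\in G_{i+rj}$ (normality of $G$ plus Lemma~\ref{sdprod}), which the paper leaves implicit.
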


\begin{proof}
Let $\kappa \in \mathcal A_j(K,G_*)$ and $g \in G_i$. Using Corollary \ref{Dark_for_commutators}, we get:
\[[\kappa^p, g] = \prod\limits_{k=1}^p \theta(k,1)^{\binom{p}{k}},\]
with $\theta(k,1) \in G_{i+kj}$, for all $k$. If $k < p$, then
$\theta(k,1)^{\binom{p}{k}} \in G_{p(i+kj)} \subseteq G_{i+pj+1}.$
As $\theta(p,1)$ also is in $G_{i+pj}$, we have:
\[[\kappa^p, g] \in G_{i+pj}.\]
This is true for every $g \in G_i$, for all $i$. Hence $\kappa^p \in \mathcal A_{pj}(K,G_*)$, which completes the proof.
\end{proof}

Proposition \ref{A p-restreinte} can be refined:
\begin{prop}\label{A*N p-restricted}
Under the same hypothesis as Proposition \ref{A p-restreinte}, $\mathcal A_*(K,G_*) \ltimes G_*$ is a $p$-restricted strongly central series.
\end{prop}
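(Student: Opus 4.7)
First, by Lemma \ref{sdprod} applied to the action of $\mathcal A_*(K, G_*)$ on $G_*$, the sequence $H_* := \mathcal A_*(K, G_*) \ltimes G_*$ is already known to be a strongly central filtration on $H_1 = G_1 \rtimes \mathcal A_1(K, G_*)$. The content of the statement to prove is therefore the $p$-restriction axiom alone: for every $j \geq 1$ and every element $h = g\kappa \in H_j = G_j \rtimes \mathcal A_j(K, G_*)$, I must establish that $(g\kappa)^p$ lies in $H_{pj} = G_{pj} \rtimes \mathcal A_{pj}(K, G_*)$.

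The strategy is to follow the pattern of the proof of Proposition \ref{A p-restreinte}, but to substitute Corollary \ref{Dark_for_products} for Corollary \ref{Dark_for_commutators}. Specializing the universal identity $x^\alpha y^\alpha = \prod_{r \geq 0} \theta(r)^{\binom{\alpha}{r}}$ at $(x, y, \alpha) = (g, \kappa, p)$ and isolating the $r = 1$ factor $\theta(1)^{\binom{p}{1}} = (g\kappa)^p$, I would obtain a relation of the form
\[(g\kappa)^p \;=\; g^p\,\kappa^p \cdot \left[\prod_{r=2}^{p} \theta(r)(g,\kappa)^{\binom{p}{r}}\right]^{-1},\]
in which each $\theta(r)$ is a product of $\{g^{\pm 1}, \kappa^{\pm 1}\}$-commutators of length at least $r$.

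The key observation is that any non-trivial iterated commutator of length $\ell \geq 2$ in the two letters $g$ and $\kappa$ must actually use both letters (pure-$g$ or pure-$\kappa$ iterated commutators collapse to $1$), and since $G$ is normal in $H$, any such mixed commutator automatically lies in $G$. Combined with the strong centrality of $H_*$, in which both $g$ and $\kappa$ have weight $j$, this yields $\theta(r)(g,\kappa) \in G_{rj}$ for every $r \geq 2$. Raising to the exponent $\binom{p}{r}$: when $r = p$, the exponent equals $1$ and $\theta(p)(g,\kappa) \in G_{pj}$ directly; when $2 \leq r \leq p-1$, the classical divisibility $p \mid \binom{p}{r}$ combined with the $p$-restriction of $G_*$ gives $\theta(r)(g,\kappa)^{\binom{p}{r}} \in G_{prj} \subseteq G_{pj}$; when $r > p$ the exponent simply vanishes. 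Hence the bracketed correction factor lies entirely in $G_{pj}$.

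To conclude, $g^p \in G_{pj}$ by the $p$-restriction of $G_*$ and $\kappa^p \in \mathcal A_{pj}(K, G_*)$ by Proposition \ref{A p-restreinte}, so $g^p\kappa^p$ already sits inside $H_{pj}$; multiplying on the right by an element of $G_{pj}$ keeps the product inside $H_{pj}$, using that $\kappa^p$ preserves $G_*$ to shuffle pieces past one another within the semi-direct product without leaving $H_{pj}$. The step I expect to require the most care is the bookkeeping of non-commutativity between the $G$- and $\mathcal A$-components; what will make the argument succeed is precisely that every correction commutator assembles into the $G$-slot, via the normality $G \triangleleft H$, so that the $p$-restriction of $G_*$ (and not of the still-to-be-verified filtration on $H$) can be invoked.
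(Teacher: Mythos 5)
Your proof is correct, but it takes a genuinely different route from the paper's at the crucial step. Both arguments start from the same Dark decomposition of $g^p\kappa^p$ (resp.\ $\kappa^p g^p$), and both must handle the correction factors $\theta(r)^{\binom{p}{r}}$ for $2 \leq r \leq p-1$, where strong centrality alone only gives $\theta(r) \in \mathcal K_{rj}$ (writing $\mathcal K_* = \mathcal A_*(K,G_*) \ltimes G_*$) and one cannot invoke the $p$-restriction of $\mathcal K_*$ without circularity. The paper resolves this with an induction on $d \leq p$, proving the intermediate statements $\mathcal K_j^p \subseteq \mathcal K_{dj}$ one degree at a time, so that $\theta(r)^{\binom{p}{r}} \in \mathcal K_{rj}^p \subseteq \mathcal K_{(d-1)rj} \subseteq \mathcal K_{dj}$ at each step. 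You instead sharpen the location of the correction terms: every non-trivial iterated commutator of length $\geq 2$ in $\{g^{\pm 1},\kappa^{\pm 1}\}$ must involve both letters (single-letter commutators vanish in the cyclic subgroups they generate), hence lies in the normal subgroup $G$, and therefore in $\mathcal K_{rj} \cap G = G_{rj}$; since the $p$-restriction of $G_*$ is part of the hypothesis, $\theta(r)^{\binom{p}{r}} = \bigl(\theta(r)^{\binom{p}{r}/p}\bigr)^p \in G_{rj}^p \subseteq G_{prj} \subseteq G_{pj}$ with no induction needed. This is a clean and slightly more direct argument; what it buys is the elimination of the auxiliary induction, at the price of the extra (but easily verified) observation that mixed commutators assemble into the $G$-slot. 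The final bookkeeping in $G_{pj} \rtimes \mathcal A_{pj}$ is routine, as you note, since this is a subgroup of $H$ containing $G_{pj}$.
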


\begin{proof}
Denote $\mathcal A_*(K,G_*) \ltimes G_*$ by $\mathcal K_*$. An element of $\mathcal K_j = \mathcal A_j \ltimes G_j$ is a product $\kappa \cdot g$, with $\kappa \in \mathcal A_j$ and $g \in G_j$. Using Corollary \ref{Dark_for_products}, we get:
\begin{equation}\label{eqn_Dark}
\kappa^p g^p = \prod\limits_{k=1}^p \theta(k)^{\binom{p}{k}} = (\kappa g)^p \cdot \theta(2)^{\binom{p}{2}} \cdots \theta(p-1)^p \cdot \theta(p),
\end{equation}
with $\theta(1) = \kappa g$ and $\theta(k) \in \mathcal K_{kj}$ for any $k$, as $\mathcal K_*$ is strongly central.
We use this formula to show, by induction on $d \leq p$, the following result: 
\[\forall j,\ \mathcal K_j^p \subseteq \mathcal K_{dj}.\]
This is true for $d = 1$, obviously. Let us assume that it holds for $d-1$.
Let $\kappa g \in \mathcal K_j$. For the sake of clarity, let us rewrite the formula \eqref{eqn_Dark}:
\[(\kappa g)^p = \kappa^p g^p \cdot \theta(p)^{-1} \cdot \prod\limits_{k=p-1}^2 \theta(k)^{-\binom{p}{k}}.\]
Using, respectively, that $\mathcal A_*$ is $p$-restricted (Proposition \ref{A p-restreinte}), that $G_*$ is (by definition) and that $\mathcal K_* = \mathcal A_* \ltimes G_*$ is strongly central, we get: 
\[\kappa^p,\ g^p,\ \theta(p)\ \in \mathcal K_{pj} \subseteq \mathcal K_{dj},\]
where the inclusion comes from the inequality $d \leq p$.
If $2 \leq k < p$, then $\theta(k) \in \mathcal K_{kj}$. As $p$ divides $\binom{p}{k}$, the induction hypothesis implies:
\[\theta(k)^{\binom{p}{k}} \in \mathcal K_{kj}^p \subseteq \mathcal K_{(d-1)kj} \subseteq \mathcal K_{dj},\]
because $(d-1)kj \geq dj$. 
Finally, we get what we were looking for:
\[(\kappa g)^p \in \mathcal K_{dj},\]
which completes the induction step, and the proof of the proposition.
\end{proof}

Let $\mathcal{SCF}_p$ be the full subcategory of $\mathcal{SCF}$ given by $p$-restricted strongly central series. As a consequence of Propositions \ref{A p-restreinte} and \ref{A*N p-restricted}, we get:
\begin{cor}
The category $\mathcal{SCF}_p$ is action-representative, the universal action on $G_*$ being $\mathcal A_*(G_*) \circlearrowright G_*$.
\end{cor}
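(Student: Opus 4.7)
The plan is to deduce the corollary from the fact, already proved in Proposition \ref{SCF_action-rep}, that $\mathcal{SCF}$ itself is action-representative with $\mathcal A_*(G_*)\circlearrowright G_*$ as universal action. Since $\mathcal{SCF}_p$ is a \emph{full} subcategory of $\mathcal{SCF}$, the only thing to verify is that the universal action in $\mathcal{SCF}$ stays inside $\mathcal{SCF}_p$ whenever the object being acted on is $p$-restricted; once this is done, the universal property in $\mathcal{SCF}_p$ comes for free.

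Concretely, assume $G_*$ is $p$-restricted. The first step is to apply Proposition \ref{A p-restreinte} with $K = \Aut(G_*) \subseteq \Aut(G_1)$ acting on $G_*$ through its defining action; by definition $\mathcal A_*(\Aut(G_*), G_*) = \mathcal A_*(G_*)$, so this proposition gives that $\mathcal A_*(G_*)$ is itself $p$-restricted. The second step is to apply Proposition \ref{A*N p-restricted} (which refines the previous one in exactly the direction we need) to conclude that the semi-direct product $G_* \rtimes \mathcal A_*(G_*)$, which represents the universal action in $\mathcal{SCF}$, is also a $p$-restricted strongly central filtration. Consequently all three terms of the canonical split extension
\[\begin{tikzcd} G_* \ar[r, hook] & G_*\rtimes \mathcal A_*(G_*) \ar[r, two heads] & \mathcal A_*(G_*) \ar[l, bend right]
\end{tikzcd}\]
lie in $\mathcal{SCF}_p$, so this extension is an action of $\mathcal A_*(G_*)$ on $G_*$ in $\mathcal{SCF}_p$.

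It remains to verify universality in $\mathcal{SCF}_p$. Let $K_*\circlearrowright G_*$ be any action in $\mathcal{SCF}_p$. It is in particular an action in $\mathcal{SCF}$, hence by Proposition \ref{SCF_action-rep} (or rather Proposition \ref{Universal_action}) classified by a unique morphism $\varphi\colon K_*\to \mathcal A_*(G_*)$ in $\mathcal{SCF}$. Because $\mathcal{SCF}_p$ is full in $\mathcal{SCF}$, the morphism $\varphi$ is automatically a morphism in $\mathcal{SCF}_p$, and its uniqueness transfers for the same reason. This yields the desired representability and identifies the representing object as $\mathcal A_*(G_*)$. There is no real obstacle here; the only mildly delicate point — and what makes the two technical propositions of the previous subsection necessary — is to check that passing from $G_*$ to the middle term $G_*\rtimes \mathcal A_*(G_*)$ preserves $p$-restrictedness, since neither an extension of $p$-restricted filtrations nor a semi-direct product of them is automatically $p$-restricted in general (mixed $p$-th power terms from Dark's formulas must be controlled, which is precisely what Proposition \ref{A*N p-restricted} does).
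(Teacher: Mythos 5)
Your proof is correct and follows exactly the route the paper intends: the corollary is stated there as an immediate consequence of Propositions \ref{A p-restreinte} and \ref{A*N p-restricted} (applied with $K = \Aut(G_*)$, so that $\mathcal A_*(K,G_*) = \mathcal A_*(G_*)$), which show that both $\mathcal A_*(G_*)$ and the middle term $G_* \rtimes \mathcal A_*(G_*)$ of the universal split extension are $p$-restricted, after which fullness of $\mathcal{SCF}_p$ in $\mathcal{SCF}$ transfers the universal property. You also correctly identify the one genuinely delicate point, namely that $p$-restrictedness of the semi-direct product is not automatic and is precisely what Proposition \ref{A*N p-restricted} supplies.
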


This allows us to answer \cite[rk. 8.6]{Massuyeau1}. Indeed, the Lie functor restricts to a functor $\Lie: \mathcal{SCF}_p \longrightarrow p \mathcal Lie$ with values in the category $p \mathcal Lie$ of $p$-restricted Lie algebras (over $\F_p$). Actions in $p \mathcal Lie$ are represented by \emph{$p$-restricted derivations}, in the sense of Jacobson \cite{Jacobson}. As in Paragraph \ref{Johnson_section}, an action $K_* \circlearrowright G_*$ in $\mathcal{SCF}_p $ induces, by exactness of the Lie functor, an action $\Lie(K_*) \circlearrowright \Lie(G_*)$  in $p \mathcal Lie$, which is encoded by a morphism between $p$-restricted Lie algebras:
\[\tau: \Lie(K_*) \longrightarrow \Der^{[p]}(\Lie(G_*)),\]
where $\Der^{[p]} \subseteq \Der$ is the $p$-restricted sub-algebra of $p$-restricted derivations, \emph{i.e.\ }derivations $\partial$ satisfying:
\[\partial(a^p) = \ad_a^{p-1}(\partial a).\]
Let us stress that for $\mathfrak g \in p \mathcal Lie$, the Lie algebra $\Der(\mathfrak g)$ is indeed a $p$-restricted sub-algebra of $\End_{\F_p}(\mathfrak g)$, but is does not act on $\mathfrak g$ in $p \mathcal Lie$: the Lie algebra $\mathfrak g \rtimes\Der(\mathfrak g)$ bears no natural $p$-restricted structure.

\begin{rmq}
Using Proposition \ref{engdeg1-p} instead of \ref{engdeg1}, and replacing derivations by $p$-restricted ones in the proof, we can get an analogous of Lemma \ref{Explicit_A_*} for $\mathcal A_*^{[p]}(G)$: 
\[\mathcal A_j^{[p]}(G) = \Enstq{\sigma \in \Aut(G)}{[\sigma, G] \subseteq \Gamma_{j+1}^{[p]}(G)}.\]
In other words $\mathcal A_*^{[p]}(G)$ in the subgroup of automorphisms acting trivially on $G/\Gamma_{j+1}^{[p]}(G)$. This is exactly the definition used by Cooper \cite[def 3.2]{Cooper}.
\end{rmq}

\begin{rmq}[On the $q$-torsion case]\label{rk_on_q-torsion}
The same statements are true when considering $q$-torsion strongly central filtrations ($q$ does not have to be a prime number here), except that they are easier to show, because the condition $G_i^q \subseteq G_{i+1}$ is equivalent to the fact that $\Lie(G_*)$ is $q$-torsion. Precisely, if $\Lie(G_*)$ is $q$-torsion, then $\Der(\Lie(G_*))$ is too, and the injectivity of the Johnson morphism $\Lie(\mathcal A_*(G_*)) \hookrightarrow \Der(\Lie(G_*))$ implies that $\Lie(\mathcal A_*(G_*))$ also is. Hence $\mathcal A_*(G_*)$, and $\mathcal A_*(G_*) \ltimes G_*$ are $q$-torsion, so that these give a universal action on $G_*$ in the category of $q$-torsion  strongly central series. 

Moreover, $\Lie(G_*)$ also gets some kind of $q$-th power operation, induced by $q$-th powers in $G = G_1$. If $G_* = \Gamma_*^{(q)}G$ is Stallings' filtration on $G$, then these operations, together with the Lie algebra structure, generate $\Lie(G_*)$ from its degree one part, which allow us to get  an analogue of Lemma \ref{Explicit_A_*}: $\mathcal A_j(\Gamma_*^{(q)}G)$ is the subgroup of automorphisms acting trivially on $G/\Gamma_{j+1}^{(p)}(G)$. Using only this definition, Cooper managed to get the above results on the $q$-torsion case \cite{Cooper}. However, his claim that the $p$-restricted case worked similarly \cite[Lem.\ 3.7]{Cooper} seems flawed, and we do not see how to get it without the technical work done above (Proposition \ref{A p-restreinte}).

The minimality of Stallings' filtration also gives an inclusion:
\[\Gamma_*^{(q)}\left(IA^{[q]}_G \right) \subseteq \mathcal A_* \left(\Gamma_*^{(q)}G \right),\]
and a corresponding Andreadakis problem. Nevertheless, our methods in studying the Andreadakis problems so far rely heavily on algebraic structures associated to the dimension subgroups $D_*^{\mathbb Z}(F_n) = \Gamma_*(F_n)$ and $D_*^{\F_p}(F_n) = \Gamma_*^{[p]}(F_n)$, so they are not suited to the study of this particular problem.
\end{rmq}

\subsection{The stable \texorpdfstring{$p$}{p}-restricted Andreadakis problem}

\subsubsection{Vanishing of the trace map}

In the $p$-restricted context, Proposition \ref{Tr_o_tau} is replaced by:

\begin{prop}\label{Tr_o_tau-p}
Let $k \geq 2$, and let $J \in GL_m(I^k_{\F_p} F_n)$. Then:
\[\tr \left( J - \Id{} \right) \in [TV,TV]_k + (TV)^p \subset V^{\otimes k} \cong I^k/I^{k+1},\]
Where $V = F_n^{ab} \otimes \F_p \cong \F_p^n$.
\end{prop}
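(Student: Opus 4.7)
The plan is to adapt the proof of Proposition \ref{Tr_o_tau} to positive characteristic, replacing $\Z$ by $\F_p$ throughout and invoking a characteristic-$p$ analogue of Bryant's criterion at the end. First I would consider the evaluation morphism $x_i \mapsto \Id{} + tA_i$, for arbitrary matrices $A_i \in M_k(\F_p)$. It extends to a filtered algebra morphism $\F_p F_n \longrightarrow M_k(\F_p[t])$ sending $I_{\F_p}^* F_n$ into $t^* M_k(\F_p[t])$, and after passing to congruence groups it induces
\[ ev_{\Id{} + tA_i} : GL_m(I^*_{\F_p} F_n) \longrightarrow GL_{mk}(t^*\F_p[t]). \]
I would then form exactly the same commutative diagram as in the original proof, with horizontal rows given by $(-) - \Id{}$ followed by matrix traces, and vertical arrows given by this evaluation.

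The crucial observation, unchanged from the $\Z$-case, is that $\F_p[t]^\times \cap (1 + t\F_p[t]) = \{1\}$, since units in $\F_p[t]$ are already constants in $\F_p^\times$; hence $\Lie(\F_p[t]^\times_*) = 0$. Applying Proposition \ref{det_and_tr} to the bottom-left square of the diagram therefore forces the composition along the right-hand column to vanish. Writing $f = \tr(J - \Id{}) \in V^{\otimes k}$ and using the identifications $\gr(I^*_{\F_p} F_n) \cong TV$ (via $1 + x_i \mapsto 1 + X_i$) and $\gr(t^*\F_p[t]) \cong \F_p[t]$, commutativity yields $t^k \tr(f(A_1,\ldots,A_n)) = 0$ in $\F_p[t]$, hence $\tr(f(A_1,\ldots,A_n)) = 0$ in $\F_p$ for every choice of matrices $A_i \in M_k(\F_p)$.

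To conclude, I would invoke the characteristic-$p$ analogue of Bryant's criterion, Proposition \ref{criterion_for_brackets-p}: any $f \in V^{\otimes k}$ whose trace evaluations vanish on the relevant class of matrices in $M_k(\F_p)$ must lie in $[TV,TV]_k + (TV)^p_k$. The extra summand $(TV)^p$ reflects the Frobenius identity $\tr(M^p) = \tr(M)^p$ valid over $\F_p$, which enlarges the kernel of the ``evaluate and take trace'' procedure by contributions coming from the $p$-th power map $u \mapsto u^p$ on $TV$, a phenomenon absent in characteristic zero. The main obstacle, which I would handle by refining Bryant's original strategy, is therefore the proof of Proposition \ref{criterion_for_brackets-p} itself: one has to keep track of how Frobenius-type contributions interact with Bryant's reduction of $f$ to a linear combination of shift-monomial traces. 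Once that criterion is granted, the statement of Proposition \ref{Tr_o_tau-p} follows directly from the vanishing established in the previous step.
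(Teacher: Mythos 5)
Your proposal is correct and follows essentially the same route as the paper, which proves this proposition by running the argument of Proposition \ref{Tr_o_tau} verbatim over $\F_p$ instead of $\Z$ (same evaluation morphisms, same commutative diagram, same use of $\Lie(\F_p[t]^\times_*)=0$ via Proposition \ref{det_and_tr}) and then substituting Proposition \ref{criterion_for_brackets-p} for Bryant's integral criterion. The only real content you defer --- the proof of the mod-$p$ criterion itself, where the extra $(TV)^p$ term appears --- is likewise treated by the paper as a separate proposition, so your outline matches the paper's structure exactly.
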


The proof is exactly the same as the proof of proposition $\ref{Tr_o_tau}$, over $\F_p$ instead of $\Z$, Proposition \ref{criterion_for_brackets} being replaced by:

\begin{prop}\label{criterion_for_brackets-p}
Let $f(X_1, ..., X_n) \in V^{\otimes k}$. Let $\mathcal C \subset M_k(\F_p)$ be the sub-$\Z$-module generated by the $\textbf{e}_{i,i+1}$. Suppose: 
\[\forall C_i \in \mathcal C,\ \tr(f(C_1, ..., C_n)) = 0.\]
Then $f \in [TV,TV]_k + (TV)^p$.
\end{prop}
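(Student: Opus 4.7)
The strategy is to mimic Bryant's proof of Proposition \ref{criterion_for_brackets} over $\F_p$, isolating the additional kernel contributed by characteristic-$p$ phenomena. Write $T : V^{\otimes k} \to \F_p^{\mathcal C^n}$ for the map $f \mapsto \bigl((C_j) \mapsto \tr(f(C_1, \ldots, C_n))\bigr)$; cyclic invariance of the trace on matrix products shows $[TV, TV]_k \subseteq \ker T$, so $T$ factors through the projection $V^{\otimes k} \twoheadrightarrow C_k V$ to an induced map $\bar T : C_k V \to \F_p^{\mathcal C^n}$. The task is to identify $\ker \bar T$.

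Following Bryant, I would make the formal substitution $C_j = \sum_{i=1}^{k} a_{j,i} N_i$ with $N_i = \textbf{e}_{i,\, i+1 \bmod k}$ and $a_{j,i}$ indeterminates, so that $\tr(f(C))$ becomes a polynomial in $\F_p[a_{j,i}]$. Every monomial in $\tr(f(C))$ is multilinear in the $a_{j,i}$, so vanishing on all $\F_p$-points is equivalent to vanishing as a polynomial. A product $N_{i_1} \cdots N_{i_k}$ has trace $1$ exactly when $(i_1, \ldots, i_k)$ is a cyclic rotation of $(1, \ldots, k)$, which gives
\[\tr(C_{j_1} \cdots C_{j_k}) \;=\; \sum_{i_1=1}^{k} \prod_{s=1}^{k} a_{j_s,\, i_1+s-1 \bmod k}.\]
Extracting the coefficient of a fixed monomial $\prod_s a_{j_s, s}$ in $\tr(f(C))$ yields $(k/d) \cdot \sum_{w \in \mathcal O} c_w$, where $\mathcal O$ is the cyclic orbit of $(j_1, \ldots, j_k)$ and $d = |\mathcal O|$ is its period; the factor $k/d$ counts the rotations reproducing the same $a$-monomial. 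Over $\Z$ the factor never vanishes and each orbit is recovered individually, giving Bryant's conclusion. Over $\F_p$, however, $(k/d) \equiv 0 \pmod p$ precisely when $d \mid k/p$, equivalently when the orbit representative is a $p$-fold repetition of a word of length $k/p$.

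The hypothesis therefore forces every orbit sum to vanish except possibly those supported on $p$-fold repetition words, and the latter are unconstrained. In $C_k V$ this identifies $\ker \bar T$ with the image of the Frobenius map $\phi : C_{k/p} V \to C_k V$, $v \mapsto v^p$. Lifting back to $V^{\otimes k}$ yields $\ker T = [TV, TV]_k + (TV)^p$, where the identification with $(TV)^p$ invokes Jacobson's identity $(x+y)^p \equiv x^p + y^p \pmod{[TV, TV]}$ in characteristic $p$ to see that $\{v^p : v \in T_{k/p}V\} + [TV, TV]_k$ is already an $\F_p$-subgroup. The main obstacle is the combinatorial bookkeeping of the factor $(k/d)$ and matching its $p$-divisibility to $p$-fold repetitions; this requires care but is otherwise standard.
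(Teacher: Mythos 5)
Your argument is correct. The combinatorial heart is the same as in the paper's proof: evaluating on the cyclic superdiagonal matrices $\textbf{e}_{i,i+1}$ detects each cyclic orbit sum $\sum_{w\in\mathcal O}c_w$ only up to the multiplicity $k/d$ (the order of the cyclic stabilizer), and $p\mid k/d$ exactly when the orbit consists of $p$-th powers; the residual $p$-th-power ambiguity is then absorbed via Jacobson's identity $(x+y)^p\equiv x^p+y^p \pmod{[TV,TV]}$. Where you differ is in how you extract the coefficient identities from pointwise vanishing over the finite field $\F_p$. You substitute indeterminates $a_{j,i}$, observe that $\tr(f(C))$ is multilinear, and invoke the fact that a multilinear polynomial vanishing on all of $\F_p^N$ is identically zero --- essentially Bryant's original generic-substitution argument, transported to characteristic $p$. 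The paper instead avoids any polynomial-identity lemma: it first treats monomials with pairwise distinct letters by a single $0/1$ evaluation, then reduces the general case to that one via the polarization morphism $\lambda: TV\to T(V\otimes V)$, $X_i\mapsto\sum_j X_{ij}$, under which a monomial $g$ of period $d$ contributes its distinguished polarized representative with multiplicity $r=k/d$. The two routes are logically equivalent; yours makes the multiplicity $k/d$ and its $p$-divisibility completely explicit in one pass, while the paper's stays entirely within specific evaluations (at the cost of the extra doubling construction and the observation that $\lambda(f)$ inherits the hypothesis because $\mathcal C$ is closed under addition). Your identification of the full kernel of $\bar T$ with the image of Frobenius is more than the proposition requires (only one inclusion is needed), but it is harmless and consistent with the remark following the proposition.
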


\begin{proof} 
We say that two elements $u$ and $u'$ of $V^{\otimes k}$ are \emph{cyclically equivalent}, and we write $u \sim u'$ if they are conjugate under the action of $\Z/p$. We want to show that $f$ is cyclically equivalent to a $p$-th power. Let us decompose $f$, up to cyclical equivalence, as a sum of pairwise non-cyclically equivalent monomials: $f \sim \sum \mu_g g$, where each $g$ is of the form $g = X_{i_1} \cdots X_{i_k}$. If $g$ is such that the $i_\alpha$ are pairwise distinct, evaluate each $X_{i_\alpha}$ as $C_{i_\alpha} = \textbf{e}_{\alpha, \alpha + 1}$, and all $X_i$ not appearing in $g$ as $C_i = 0$. Then $\mu_g = \tr(f(C_1, ..., C_n)) = 0$. As a consequence, no such $g$ can appear in our decomposition of $f$. 

Let $\lambda$ be the algebra morphism from $TV$ to $T(V \otimes V)$ sending $X_i$ to $\sum_j X_{ij}$ (where $X_{ij} = X_i \otimes X_j$). Take a monomial $g = X_{i_1} \cdots X_{i_k}$ as above. Its image is $\lambda(g) = \sum_{\textbf j} X_{i_1 j_1} \cdots X_{i_k j_k}$, the sum being taken over every $\textbf j = (j_1, ..., j_k) \in \{1, ..., k\}^k$. Let $r$ be the number of monomials cyclically equivalent to $h = X_{i_1 1} \cdots X_{i_k k}$ in this sum. Then $r$ is exactly the number of elements of $\Z/k$ stabilizing $g$. It is a multiple of $p$ if and only if $g$ is a $p$-th power. If we decompose $\lambda(f)$ up to cyclic equivalence, as we did earlier for $f$, the only occurrences of $h$ must come from $\lambda(g)$, hence the coefficient of $h$ must be $r \mu_g$. Note that $\lambda(f)$ satisfies the same hypothesis as $f$, because $\mathcal C$ is stable under addition. Since the $X_{i_\alpha \alpha}$ are pairwise distinct, we can apply the above argument and find that $r \mu_g = 0$. Thus, $\mu_g = 0$ or $g$ is a $p$-th power. Whence the result.
\end{proof}

\begin{rmq}
Any bracket and any $p$-th power satisfies the condition of Proposition \ref{criterion_for_brackets-p}. For brackets, it follows from the fact that the trace of a bracket is itself a sum of brackets. For $p$-th power, remark that if $M = \sum m_i \textbf{e}_{i, i+1} \in M_n(R)$, where $R$ is an associative ring of caracteristic $p$, then $M^p = (\prod m_i) \cdot \Id{p}$, and $\tr(\Id{p}) = p\cdot 1 = 0$ in $\k$. If $k = pl$ and $f = (X_{i_1} \cdots X_{i_l})^p$, apply this to $M = C_{i_1} \cdots C_{i_l}$ (where the $C_{i_\alpha}$ are in $\mathcal C$), seen as a $p\times p$-matrix with coefficients in $M_l(\k)$. 
\end{rmq}

Because of Proposition \ref{Tr_o_tau-p}, in characteristic $p$, we will consider the trace map as taking values in  $C_*^{[p]}V = TV/([TV,TV] + (TV)^p)$, which is the quotient of the cyclic power $C_*V$ by $p$-th powers.
The conclusion of Proposition \ref{Tr_o_tau-p} then becomes: $\tr \left( J - \Id{} \right) = 0 \in C_*^{[p]}V.$

\subsubsection{Linear algebra}

Consider the Johnson morphism 
\[(\tau^{[p]})': \Lie^{[p]}\left(IA_n^{[p]} \right) \longrightarrow \Der^{[p]}_*\left(\Lie^{[p]}(F_n) \right) \cong V^* \otimes \mathfrak L^{[p]}V,\]
the last isomorphism being obtained as in Example \ref{Johnson_Fn}, using Example \ref{Lp(Fn)_free} instead of Example \ref{L(Fn)_free}, and replacing derivations by $p$-restricted ones.
\emph{When $p \neq 2$}, the morphism $(\tau^{[p]}_1)'$ is surjective. Indeed, the free $\F_p$-Lie algebra $\mathfrak LV$ is a sub-algebra of the free $p$-restricted algebra $\mathfrak L^{[p]}V$, and this inclusion is an isomorphism in degrees prime to $p$, in particular in degree $2$; thus we can lift the generators of $V^* \otimes \Lambda^2V$ by the generators of $IA_n$ used in the proof of Proposition \ref{IAn^ab}. Moreover, $\Lie^{[p]} \left(IA_n \right)$ is generated in degree $1$ as a $p$-restricted Lie algebra (cf. \ref{engdeg1-p}). As a consequence, the image of $\tau'$ is exactly the $p$-restricted Lie algebra generated in degree $1$ inside $\Der^{[p]}_*(\mathfrak L^{[p]}V )$.

\medskip

The reader can easily check that the obvious $p$-restricted version of Proposition \ref{algebraic_description_of_tr} does hold: the trace map obtained from free differential calculus can be seen as the composite of the Johnson morphism $\tau: \Lie_k(\mathcal A_*^{[p]}(F_n)) \rightarrow \Der_k^{[p]}(\mathfrak L^{[p]}V) \cong V^* \otimes \mathfrak L^{[p]}_{k + 1}V$ with:
\[\tr_M: V^* \otimes \mathfrak L^{[p]}_{k + 1}V \overset{\iota}{\longrightarrow} 
        V^* \otimes V^{\otimes k + 1}    \overset{\Phi}{\longrightarrow}
        V^{\otimes k}                    \overset{\pi}{\longrightarrow}
        C_k^{[p]}V,  \]
where $\iota$ and $\pi$ again denote the canonical maps.

\begin{nota}
Let $\mathfrak I^{[p]}$ denote the image of $(\tau^{[p]})'$, the $p$-restricted Lie algebra generated in degree $1$ inside $\Der(\mathfrak LV)$.
\end{nota}

The following proposition can be seen as a direct consequence of Proposition \ref{Tr_o_tau-p}.
\begin{prop}\label{Tr(I)-p}
For every $k \geq 2$, $\tr_M(\mathfrak I_k^{[p]}) = \{0\}.$
\end{prop}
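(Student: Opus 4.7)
The plan is to reproduce, almost word for word, the one-line argument that proved the classical counterpart, Proposition \ref{Tr(I)}. By definition, the $p$-restricted Johnson morphism $(\tau^{[p]})'$ factors as $\tau^{[p]} \circ i_*^{[p]}$, where $i_*^{[p]}: \Lie^{[p]}(IA_n^{[p]}) \longrightarrow \Lie^{[p]}(\mathcal A_*^{[p]}(F_n))$ is the canonical map and $\tau^{[p]}$ is the Johnson morphism of the universal action of $\mathcal A_*^{[p]}(F_n)$ on $\Gamma_*^{[p]}(F_n)$. Consequently,
\[
\mathfrak I^{[p]}_k \;=\; \ima\!\bigl((\tau^{[p]})'_k\bigr) \;\subseteq\; \ima(\tau^{[p]}_k),
\]
so it suffices to verify that $\tr_M \circ \tau^{[p]}$ is zero in degrees $k \geq 2$.

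Invoking the $p$-restricted version of Proposition \ref{algebraic_description_of_tr} (whose truth is asserted just before the present statement), the composite $\tr_M \circ \tau^{[p]}$ coincides with the trace map $\tr \colon \Lie^{[p]}_k(\mathcal A^{[p]}_*(F_n)) \longrightarrow C^{[p]}_k V$ coming from free differential calculus, namely the one induced by $\varphi \longmapsto \tr(D\varphi - \Id{})$. By Proposition \ref{Tr_o_tau-p}, for every $\varphi \in \mathcal A^{[p]}_k(F_n)$ with $k \geq 2$, one has $\tr(D\varphi - \Id{}) \in [TV, TV]_k + (TV)^p_k$, and this expression therefore vanishes after projection onto the quotient $C^{[p]}_k V = V^{\otimes k}/\bigl([TV,TV]_k + (TV)^p_k\bigr)$. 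Thus $\tr_M \circ \tau^{[p]} = 0$ in those degrees, which gives the desired conclusion.

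The entire content of the proof is thus already packaged in Proposition \ref{Tr_o_tau-p}; no further computation is required. The only step that took nontrivial work was establishing Proposition \ref{Tr_o_tau-p} itself, which in turn rested on the characteristic-$p$ refinement (Proposition \ref{criterion_for_brackets-p}) of Bryant's criterion — this is where $p$-th powers had to be added to the target of the trace. Once that ingredient is in place, the present statement is formally identical to its classical analogue.
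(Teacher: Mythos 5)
Your proof is correct and follows the same route as the paper: the paper states this proposition as a direct consequence of Proposition \ref{Tr_o_tau-p}, with the implicit chain $\mathfrak I^{[p]} = \ima((\tau^{[p]})') \subseteq \ima(\tau^{[p]})$ and $\tr_M \circ \tau^{[p]} = \tr$ (via the $p$-restricted version of Proposition \ref{algebraic_description_of_tr}) vanishing in $C_k^{[p]}V$, exactly as you spell out. No discrepancy to report.
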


Consider be the subspace of $p$-restricted derivations stabilizing the free $\F_p$-Lie algebra $\mathcal LV \subset \mathcal L^{[p]}V$. It is a $p$-restricted Lie sub-algebra of $\Der^{[p]}(\mathcal L^{[p]}V)$. Since each derivation of $\mathfrak LV$ extends to a unique $p$-restricted derivation of $\mathcal L^{[p]}V$, this sub-algebra is isomorphic to $\Der(\mathfrak LV)$. Under the identification with the graded module $V^* \otimes \mathfrak L^{[p]}V$, it corresponds exactly to $V^* \otimes \mathfrak LV$. As a consequence, if $p \neq 2$, the degree one part is the same. Hence:
\[\mathfrak I^{[p]} \subseteq \Der^{[p]}_{\mathcal LV}(\mathcal L^{[p]}V).\]
This implies that there is \textbf{no stable surjectivity here}: we can easily easily produce examples of automorphisms whose associated derivation does not preserve $\mathcal LV$. For instance, take any world in $\Gamma_k(F_n)$ not containing any occurrence of $x_1$. Then the automorphism $\varphi$ defined by $x_1 \mapsto w^p x_1$ and $x_i \mapsto x_i$ when $i \neq 1$ is obviously in $\mathcal A^{[p]}_{pk-1}$, but $\tau(\varphi) = X_1^* \otimes (\overline{w - 1})^p$ sends $X_1$ outside of $\mathfrak LV$.

\subsubsection{Stable cokernel of \texorpdfstring{$i_*$}{i*}}\label{coker-p}

We close the present paper with a quantification of the lack of stable surjectivity in the $p$-restricted case.

Let $k \geq 2$. Like in paragraph \ref{coker}, we get a commutative diagram with exact rows:
\[\begin{tikzcd}
\bar{\mathfrak I}_k              \ar[r, hook] \ar[d, dashed, "\phi"]
& V^* \otimes \mathfrak L_{k+1}V \ar[r, two heads]     \ar[d, "\Phi"]
& X_k                            \ar[d, dashed , "\overline{\Phi}"] \\
{[TV, TV]_k}      \ar[r, hook]   
& V^{\otimes k}   \ar[r, two heads, "\pi"]               
& C_k V.
\end{tikzcd}\]
Here, $V$ denotes $F_n^{ab} \otimes \F_p$, and $\bar{\mathfrak I}_* = \mathfrak I_* \otimes \F_p$ is the sub-Lie algebra generated in degree one inside $\Der(\mathfrak LV)$. The space $X_k$ is just the quotient of $V^* \otimes \mathfrak L_{k+1}V$ by $\bar{\mathfrak I}_k$.

We will be interested in a slightly different diagram, though:
\[\begin{tikzcd}
\mathfrak I_k^{[p]}              \ar[r, hook] \ar[d, dashed, "\phi"]
& V^* \otimes \mathfrak L_{k+1}V \ar[r, two heads]     \ar[d, "\Phi"]
& X'_k                           \ar[d, dashed , "\overline{\Phi}"] \\
{[TV, TV]_k + (TV)^p}      \ar[r, hook]   
& V^{\otimes k}            \ar[r, two heads, "\pi"]               
& C_k^{[p]} V.
\end{tikzcd}\]
We can still apply the calculations from \cite{Satoh2} to show that if $n \geq k+2$, then $\Phi$ is surjective and $\ker \Phi \subseteq \bar{\mathfrak I}_k \subseteq \mathfrak I_k^{[p]}$. Only, now $\phi$ could have a cokernel. From \cite{Satoh2}, we only get that brackets are in its image, so this cokernel can only come from $p$-th powers. In particular, it is concentrated in degrees divisible by $p$. We denote it by $K$. The same application of the snake lemma as in the proof of Proposition \ref{prop-coker} gives that $K$ is also the kernel of $\overline\Phi$, and that $\overline\Phi$ is surjective.

Now consider the diagram:
\[\begin{tikzcd}
\mathfrak I_k^{[p]}              \ar[r, hook] \ar[d, hook, "\iota"]
& V^* \otimes \mathfrak L_{k+1}V \ar[r, two heads]     \ar[d, hook]
& X'_k                           \ar[d, two heads , "\overline{\Phi}"] \\
\ker(\tr_M)_k      \ar[r, hook]   
&V^* \otimes \mathfrak L_{k+1}^{[p]}V            \ar[r, two heads, "\tr_M"]               
& C_k^{[p]} V.
\end{tikzcd}\]
Denote by $L$ the cokernel of the middle inclusion, then $L = V^* \otimes (\mathfrak L_{k+1}^{[p]}V/\mathfrak L_{k+1}V)$ is concentrated in degrees $ k = pl - 1$ (with $l \geq 1$). The snake lemma gives a short exact sequence:
\[\begin{tikzcd}
0 \ar[r] &K \ar[r] &\coker(\iota) \ar[r] &L \ar[r] &0.
\end{tikzcd}\]
Since the trace map $\tr_M \circ \tau$ vanishes, we have a commutative diagram:
\[\begin{tikzcd}
\Lie(IA_n^{[p]}) \ar[r, "i_*"] \ar[rd, swap, "\tau'"] 
&\Lie(\mathcal A_*^{[p]}) \ar[d, hook, "\tau"] \\
&\ker(\tr_M).
\end{tikzcd}\]
This implies that the cokernel of $i_*$ injects into the cokernel of $\iota$. Thus, we have proved:
\begin{prop}\label{lack_of_stable_surj-p}
Fix $n$ an integer, and consider only degrees $k \leq n-2$. The cokernel of the canonical morphism 
\[i_*: \Lie(IA_n^{[p]}) \rightarrow \Lie(\mathcal A_*^{[p]}(F_n))\]
is concentrated in degrees $k = pl-1$ and $k = pl$ (for $l \geq 1$). 
If $k = pl-1$, then $\coker((i_*)_k)$ injects into $V^* \otimes (\mathfrak L_{k+1}^{[p]}V/\mathfrak L_{k+1}V)$. If $k = pl$, it is a sub-quotient of $V^{\otimes l}$.
\end{prop}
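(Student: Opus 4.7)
The plan is to assemble the pieces already in place: the two diagrams and the snake-lemma sequence established immediately before the statement together identify $\coker(i_*)$ as a sub-object of an extension of $L$ by $K$, and the rest is a degree-by-degree bookkeeping.

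First I would ensure that $\tau$ is injective by applying the $p$-restricted analogue of Lemma \ref{Johnson_inj} (the same argument runs verbatim in $\mathcal{SCF}_p$, since injectivity of $\tau$ on each graded piece does not depend on the $p$-restricted structure). Combining this injectivity with the vanishing $\tr_M \circ \tau = 0$ (a reformulation of Proposition \ref{Tr_o_tau-p}) factors $\tau$ through $\ker(\tr_M)$. Since $\tau \circ i_* = \tau'$ has image exactly $\mathfrak I^{[p]}$, this yields a graded embedding
\[
\coker(i_*) \ \longhookrightarrow\ \ker(\tr_M)/\mathfrak I^{[p]} \ =\ \coker(\iota),
\]
reducing the problem to analyzing $\coker(\iota)$ in each degree.

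Next I would invoke the already-established short exact sequence $0 \to K \to \coker(\iota) \to L \to 0$ and track the degrees of the end terms. The module $L = V^* \otimes (\mathfrak L^{[p]}_{\ast+1}V/\mathfrak L_{\ast+1}V)$ is supported precisely in degrees $k$ with $k+1$ divisible by $p$, since $\mathfrak L^{[p]}V$ differs from $\mathfrak L V$ only through $p$-th powers, which land in degrees of the form $pl$. On the other hand, $K = \coker(\phi)$ vanishes outside degrees divisible by $p$: in degree $k$ not of the form $pl$ one has $(TV)^p_k = 0$, so the target of $\phi$ reduces to $[TV,TV]_k$, which lies entirely in $\ima(\phi)$ by the quoted result of Satoh. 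The two supports being disjoint, $\coker(\iota)$ splits degree by degree; this gives both the degree concentration claim and the injection into $L_{pl-1}$ in degree $pl-1$.

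It remains to identify $K_{pl}$ as a quotient of $V^{\otimes l}$, so that the last clause follows from the injection $\coker(i_*)_{pl} \hookrightarrow K_{pl}$. Here I would use the classical Jacobson identity, which says that $x \mapsto x^p$ on an associative $\F_p$-algebra is $\F_p$-linear modulo the commutator subspace. Applied to $TV$, this yields a surjective $\F_p$-linear map $V^{\otimes l} \twoheadrightarrow \bigl((TV)^p_{pl} + [TV,TV]_{pl}\bigr)/[TV,TV]_{pl}$, of which $K_{pl}$ is a further quotient. The main potential subtlety — rather than a real obstacle — is this Frobenius argument, since $x \mapsto x^p$ is not linear on the nose; but once one works modulo $[TV,TV]_{pl}$ the usual identity $(x+y)^p \equiv x^p + y^p$ (modulo commutators) closes the gap.
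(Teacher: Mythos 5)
Your proof is correct and follows essentially the same route as the paper: the injection $\coker(i_*)\hookrightarrow\coker(\iota)$ via the vanishing of $\tr_M\circ\tau$ and the injectivity of $\tau$, the short exact sequence $0\to K\to\coker(\iota)\to L\to 0$ from the snake lemma, and the degree bookkeeping on $K$ and $L$. Your explicit use of Jacobson's identity to present $K_{pl}$ as a quotient of $V^{\otimes l}$ merely spells out a step the paper relegates to its closing remark on the Frobenius twist.
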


\begin{rmq}
The tensor power $V^{\otimes l}$ appearing in the proposition is in fact the \emph{Frobenius twist} of $V^{\otimes l}$. This has no consequence here, as the Frobenius map is trivial on $\F_p$, but it should be kept in mind for any functorial study of this situation.
\end{rmq}

\bibliographystyle{alpha}
\bibliography{Ref_Surj_stable}

\end{document}